\newtheorem{theorem}{Theorem}[section]
\newtheorem{lemma}[theorem]{Lemma}
\newtheorem{proposition}[theorem]{Proposition}
\newtheorem{corollary}[theorem]{Corollary}
\newtheorem{Main Theorem}[theorem]{Main Theorem}
\theoremstyle{plain}
\newtheorem{Main}{Main Theorem}
\newtheorem{notation}[theorem]{Notation}
\theoremstyle{definition}
\newtheorem{definition}[theorem]{Definition}
\newtheorem{example}[theorem]{Example}
\theoremstyle{remark}
\newtheorem{remark}[theorem]{Remark}
\numberwithin{equation}{section}
\newcommand{\Def}{\stackrel{\mathrm{def}}{=\!\!=}}
\newcommand{\pf}{\noindent {\em Proof.\ }}
\newcommand{\ep}{
\hfill {$\square$} \medskip} 
\DeclareMathOperator{\charac}{char}
\DeclareMathOperator{\Hom}{Hom}
\DeclareMathOperator{\id}{id}
\font\smbfit=cmbxti10
\begin{document}

\title{Polynomial-like elements in vector spaces with
group actions}

\author[Minh Kha]{Minh Kha${}^{\natural}$}
\address{Department of Mathematics,
The University of Arizona, Tucson, Arizona, 85721, USA}
\email{minhkha@math.arizona.edu}
\thanks{${}^{\natural}$ The author acknowledges the support of the NSF Grant DMS-1517938}
\author[Vladimir Lin]{Vladimir Lin}
\address{Department of Mathematics, Technion-Israel Institute of Technology, Haifa, Israel 32000}
\email{vlalin@gmail.com}
\email{vlin@technion.ac.il}


\subjclass[2010]{Primary 54C40, 14E20; Secondary 46E25, 20C20}
\date{January 1, 1994 and, in revised form, June 22, 1994.}

\dedicatory{To the memory of Selim Grigorievich Krein, a great man and a great mathematician}

\keywords{Algebraic geometry, Group actions, Periodic differential operators}

\begin{abstract}
In this paper, we study polynomial-like elements in vector spaces equipped with group actions. We first define these elements via iterated difference operators. In the case of a full rank lattice acting on an Euclidean space, these polynomial-like elements are exactly polynomials with periodic coefficients, which are closely related to solutions of periodic differential equations. Our main theorem confirms that if the space of polynomial-like elements of degree zero is of finite dimension then for any $n \in \mathbb{Z}_+$, the space consisting of all polynomial-like elements of degree at most $n$ is also finite dimensional.
\end{abstract}

\maketitle

\section*{Introduction}


In 1984, T. Lyons and D. Sullivan \cite{LyoSul} used the
connections between the theory of harmonic functions and the theory of stochastic processes to prove that on a nilpotent covering of a compact Riemannian manifold, there are no nonconstant positive (and a fortiori no nonconstant bounded) harmonic functions.
In \cite {Lin}, a new approach was proposed, applicable both to bounded holomorphic functions on nilpotent coverings of complex spaces and to bounded harmonic functions on such coverings of Riemannian manifolds.

In the case of a compact base with a fixed Riemannian metric, the question naturally arises of the structure of spaces of holomorphic or harmonic functions on coverings. In particular, it is natural to expect that on nilpotent coverings the spaces of the corresponding functions of bounded polynomial growth are finite-dimensional. In complex-analytic case, some results for {\it abelian} coverings were obtained by A. Brudnyi \cite{BrudnyiA} and then, in both complex-analytic and harmonic cases in the paper of P. Kuchment and Y. Pinchover \cite{KuchPincho}. 

On the other hand,  in the series of three papers \cite{ColdMinic1}-\cite{ColdMinic3}, T. Colding and W. Minicozzi studied harmonic functions of restricted growth on Riemannian manifolds. In particular, it follows from their results that the spaces of harmonic or holomorphic functions of restricted polynomial growths on nilpotent coverings of {\em{K{\"{a}}hler}} manifolds are of finite dimension.

Another part of our motivation in studying polynomial-like elements via difference operators approach comes from the related studies of periodic equations, e.g., Liouville type results for elliptic equations of second-order with periodic coefficients (in divergence form) on Euclidean spaces, which
appeared in the work \cite{AveLin} of M. Avellaneda and F.-H. Lin. It is also worthwhile to note that analogous Liouville type results have been established in \cite{MosStru}. Such Liouville type results show that every solution with polynomial growth of the equation admits a representation: it is a linear combination of polynomials whose coefficients are periodic functions and moreover, each of these polynomials is also a solution. Hence, the spaces of such solutions (with a fixed polynomial growth) are finite dimensional.
As a first attempt to generalize some of these results, our first step is to give a definition of polynomial-like elements in vector spaces equipped with group actions. Rather than using explicit formulas, we choose to define in a more invariant way by using difference operators approach.

Let us give a brief outline of the paper.
Assume that $G$ is a group that acts linearly on a vector space $A$. Section \ref{sect: Polynomial-like elements in vector spaces with
group actions} is devoted to defining polynomial-like elements (or $G$-polynomials) in $A$ through the  iterated difference operators $D^n$ ($n \in \mathbb{Z}_+$). In Subsection \ref{subsect: Iterated difference operators and polynomial-like functions}, these iterated difference operators, which can be considered as analogs of the usual derivatives or the difference operators for $G$-moduli spaces, are introduced inductively via the action of $G$ on $A$. In Subsection \ref{subsect: Polynomial-like elements}, $G$-polynomials of degree at most $n$ in $A$ are defined as elements in the kernel of the $(n+1)^{th}$-iterated difference operator $D^{n+1}$.

Motivating from Liouville type results, we would like to understand the finite dimensionality of the spaces of polynomial-like elements under certain conditions. Our main aim is to prove the following theorem:
\begin{Main}\label{Main Theorem}
Let $F$ be a field of characteristic $0$,
$G$ be a group, and $A$ be a $F$-vector space endowed with
a linear right $G$-action. Let $A^G$ be the space consisting of all $G$-invariant elements in $A$ and $\mathcal{P}_n(G,A)$ be the space consisting of all $G$-polynomials of degree at most $n$ in $A$.
If the group $\widetilde G=G/[G,G]$ is finitely generated
and $\dim_F A^G<\infty$ then
\begin{equation*}\label{finiteness dimP}
\dim_F {\mathcal P}_n(G,A)<\infty\quad \text{for every} \ \, n\in\mathbb Z_+\,.
\end{equation*}
\end{Main}

We will develop the necessary tools, and then use them to prove this theorem in the rest of Section \ref{sect: Polynomial-like elements in vector spaces with group actions}, i.e., Subsections \ref{subsect: Polymorphisms}, \ref{subsect: Embeddings of polynomials to cochains with values in AG}, \ref{subsect: Coboundary operators and iterated difference operators}, and \ref{subsect: Iterated difference operators and polymorphisms}.

In Section \ref{sect: Polynomial-like elements in a ring}, we examine more properties of the iterated difference operators if $A$ has an additional ring structure that is compatible with the group action. In more details, we prove $(a)$ a Leibniz formula for the operator $D^1$, $(b)$ the set of all polynomial-like elements in $A$ is a subring, $(c)$ $D^n$ is a linear operator over the ring $A^G$. These 
results are needed for the next section.

In Section \ref{sect: Polynomial-like elements related to lattice}, we apply the above Main Theorem to the case when $G$ is a lattice acting naturally on $A$. We consider here an important example when $A$ is a $G$-invariant subspace of the algebra of continuous functions on the Euclidean space $\mathbb{R}^r$, where $r$ is the rank of the lattice $G$. In this example, we characterize completely $G$-polynomials (see Proposition \ref{Gamma-periodic polynomials are
polynomial-like elements}): these are exactly polynomials with $G$-invariant coefficients \footnote{These polynomials are called Floquet functions, which play an important role in studying the spectral theory of periodic differential operators (see e.g., \cite{KuchSurvey}).}, which we introduce at the beginning of Subsection \ref{subsec: G-periodic polynomials}.
Then in Subsection \ref{subsect: Gamma-periodic polynomials in Gamma-invariant subspaces}, we give a useful interpretation of the Main Theorem when $A$ is the space of all classical global solutions of a $G$-periodic linear differential operator $\mathcal{D}$. Finally, Subsection \ref{polynomial-like-solutions} provides some remarks related to periodic operators acting on co-compact regular Riemannian coverings whose deck transformation groups $G$ are not necessarily abelian.

\vspace{3pt}
\noindent
\textbf{Acknowledgments}: The authors are grateful to the referees for useful comments on this manuscript. The work of the first author was partially supported by the NSF grant DMS-1517938. Minh Kha expresses his gratitude to the NSF for the support.

\section{Polynomial-like elements}
\label{sect: Polynomial-like elements in vector spaces with
group actions}



\subsection{Iterated difference operators in $G$-moduli}
\label{subsect: Iterated difference operators and polynomial-like functions}
Let $G$ be a group with the unity $\mathbf e$ and $A$ be an additive abelian group.
\begin{definition}
\label{cochains}
For $n\in\mathbb{Z}_+$, let us denote by ${\mathcal C}^n(G,A)$ 
the additive group of all {\em normalized} $n$-cochains of $G$
with the values in $A$;
that is, ${\mathcal C}^0(G,A)=A$ and for $n\in\mathbb N$ the group
${\mathcal C}^n (G,A)$ consists of all functions
$$
c\colon\, G^n=\underset{n}{\underbrace{G\times\cdots\times G}}
\ni (g_1,...,g_n)\mapsto c(g_1,...,g_n)\in A
$$
such that $c(g_1,...,g_n)=0$ whenever at least one of the elements
$g_1,...,g_n$ equals $\mathbf e$.
\smallskip
\end{definition}

Suppose that $A$ is endowed with a
right $G$-module structure 
\begin{equation}\label{right G-action in A}
A\ni a\mapsto a^g\in A, \ \ g\in G\,.
\end{equation}
Such a structure induces the following right $G$-actions in
cochain groups ${\mathcal C}^n (G,A)$:
\begin{equation*}\label{right G-actions on cochains}
\aligned
\ & {\mathcal C}^n (G,A)\ni c\mapsto c^g\in {\mathcal C}^n (G,A)\,, \ \
c^g(g_1,...,g_n)=[c(g_1,...,g_n)]^g\,, \\
&\hskip7cm (g,g_1,...,g_n\in G\,, \ n\in\mathbb Z_+)\,.
\endaligned
\end{equation*}
These actions give rise to group homomorphisms
\begin{equation*}\label{iterated difference operators}
D^n\colon A \to {\mathcal C}^n (G,A) \ \ (n\in\mathbb Z_+)
\end{equation*}
defined as follows. First, we define homomorphisms
\begin{equation*}\label{domain and range of Delta homomorphisms}
d_n\colon\,{\mathcal C}^{n-1}(G,A)\to {\mathcal C}^n(G,A)
\ \ (n \ge 1)
\end{equation*}
by the formulas
\begin{equation}\label{d homomorphisms}
\aligned
(d_{n}c)(g_1,\ldots,g_{n-1},g_n)
&=c^{g_n}(g_1,\ldots,g_{n-1})-c(g_1,\ldots,g_{n-1}) \\
&=[c(g_1,\ldots,g_{n-1})]^{g_n}-c(g_1,\ldots,g_{n-1}) \\
&\qquad\quad (c\in{\mathcal C}^{n-1}(G,A)\,, \ \
g_1,\ldots,g_n\in G\,, \ \ n>1)\,.
\endaligned
\end{equation}
Using these homomorphisms, we define homomorphisms
$D^n\colon A\to\mathcal C^n(G,A)$ by the recursion relations
\begin{equation}\label{definition of iterated difference operators}
\aligned
\ & D^0=\textbf{id}_A \ \text{(the identity operator in} \ A) \ \, \text{and} \ \,
D^n = d_n D^{n-1} \ \ \text{for} \ n\in\mathbb N\,, \\
&\text{or, which is the same,} \ \,
D^n = d_n \cdots d_1 D^0 \ \,
\text{for all} \ \, n\in\mathbb{Z}_+.
\endaligned
\end{equation}
These homomorphisms $D^n$ are called the {\em iterated difference operators}.

\begin{notation}\label{Not: Notation for products}
Let $n,s,i_1,...,i_s\in{\mathbb N}$, where $1\le s\le n$
and $1\le i_1<...<i_s\le n$. For any $g_1,...,g_n\in G$,
set
$$
\pi_{i_1,...,i_s} (g_1,...,g_n)
= g_1\cdot\ldots\cdot\widehat{g_{i_1}}
\cdot\ldots\cdot\widehat{g_{i_2}}
\cdot\ldots\cdot\widehat{g_{i_s}}\cdot\ldots\cdot g_n
$$
$($terms with hats in the right hand side must be omitted,
and the empty product is defined to be $\mathbf e$, the unity of $G$$)$.
\end{notation}

\begin{lemma}\label{Lm: formula for Dna}
For any $a\in A$, $n\in{\mathbb N}$, and $g_1,...,g_n\in G$, we have
\begin{equation}\label{formula for Dna}
\aligned
\lbrack D^n a\rbrack (g_1,...,g_n)
&=a^{g_1\cdots g_n}+\\
&+\sum_{s=1}^{n-1}(-1)^s\sum_{1\le i_1<...<i_s\le n}
a^{\pi_{i_1,...,i_s} (g_1,...,g_n)}
+(-1)^n a\,.
\endaligned
\end{equation}
\end{lemma}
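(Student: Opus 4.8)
The plan is to prove \eqref{formula for Dna} by induction on $n$, after first rewriting its right-hand side in a uniform, index-set form. Observe that the term $a^{g_1\cdots g_n}$ is the $s=0$ contribution (no factor omitted) and $(-1)^n a = (-1)^n a^{\mathbf e}$ is the $s=n$ contribution (all factors omitted, so $\pi_{1,\ldots,n}(g_1,\ldots,g_n)=\mathbf e$ by the empty-product convention of Notation \ref{Not: Notation for products}). Writing $\pi_S := \pi_{i_1,\ldots,i_s}$ for $S=\{i_1<\cdots<i_s\}\subseteq\{1,\ldots,n\}$ and $\pi_\emptyset(g_1,\ldots,g_n):=g_1\cdots g_n$, the identity \eqref{formula for Dna} is thus equivalent to
\[
[D^n a](g_1,\ldots,g_n)=\sum_{S\subseteq\{1,\ldots,n\}}(-1)^{\abs{S}}\,a^{\pi_S(g_1,\ldots,g_n)}.
\]
For $n=1$ this reads $[D^1 a](g_1)=(d_1 D^0 a)(g_1)=a^{g_1}-a$, which is exactly the right-hand side, so the base case holds.

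For the inductive step I would assume the formula for $n-1$ and apply $D^n=d_n D^{n-1}$ together with \eqref{d homomorphisms} to get
\[
[D^n a](g_1,\ldots,g_n)=\bigl[[D^{n-1}a](g_1,\ldots,g_{n-1})\bigr]^{g_n}-[D^{n-1}a](g_1,\ldots,g_{n-1}),
\]
and then substitute the inductive hypothesis into each of the two terms on the right. Two elementary identities for $\pi$ drive the computation. First, since $A$ is a right $G$-module one has $(a^h)^{g_n}=a^{hg_n}$, and for every $S\subseteq\{1,\ldots,n-1\}$ one has $\pi_S(g_1,\ldots,g_{n-1})\cdot g_n=\pi_S(g_1,\ldots,g_n)$ (right multiplication by $g_n$ just reinstates $g_n$ at the end, and $n\notin S$); hence the first term equals $\sum_{S\subseteq\{1,\ldots,n-1\}}(-1)^{\abs{S}}a^{\pi_S(g_1,\ldots,g_n)}$, i.e.\ the part of the target sum running over subsets of $\{1,\ldots,n\}$ that do not contain $n$. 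Second, for every $S\subseteq\{1,\ldots,n-1\}$ one has $\pi_S(g_1,\ldots,g_{n-1})=\pi_{S\cup\{n\}}(g_1,\ldots,g_n)$ (additionally omitting $g_n$ from $g_1\cdots g_n$); hence the second term equals $\sum_{S\subseteq\{1,\ldots,n-1\}}(-1)^{\abs{S}+1}a^{\pi_{S\cup\{n\}}(g_1,\ldots,g_n)}=\sum_{T\subseteq\{1,\ldots,n\},\,n\in T}(-1)^{\abs{T}}a^{\pi_T(g_1,\ldots,g_n)}$, i.e.\ the part of the target sum running over subsets containing $n$. Adding the two contributions recovers $\sum_{S\subseteq\{1,\ldots,n\}}(-1)^{\abs{S}}a^{\pi_S(g_1,\ldots,g_n)}$, which closes the induction.

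The argument is short and essentially formal, so there is no deep obstacle; the one point requiring care is the combinatorial bookkeeping — namely, verifying the two identities for $\pi_S$ above and keeping track of which subsets of $\{1,\ldots,n\}$ arise from the $(\cdot)^{g_n}$ term (those avoiding $n$) versus from the subtracted term (those containing $n$), together with the accompanying sign shift $(-1)^{\abs{S}}\mapsto(-1)^{\abs{S}+1}$. Once these are in place, the two halves of the recursion assemble exactly into the full subset sum, with no leftover or doubled terms: for instance the $s=0$ summand of $[D^{n-1}a](g_1,\ldots,g_{n-1})$ produces the $T=\{n\}$ term $-a^{g_1\cdots g_{n-1}}$ of the subtracted part, and its top ($s=n-1$) summand produces the $T=\{1,\ldots,n\}$ term $(-1)^n a$.
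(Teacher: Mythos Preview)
Your proof is correct and follows exactly the approach the paper indicates: the paper's proof consists of the single sentence ``The proof is by induction in $n$,'' and you have supplied precisely that induction, with the subset reformulation making the bookkeeping transparent. There is nothing to add or correct.
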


\begin{proof}
The proof is by induction in $n$.
\end{proof}

\begin{remark}\label{Rmk: symmetry}
Let us define
\begin{equation}\label{Delta na and a(g1,...,gn)}
\aligned
\lbrack \Delta^n a \rbrack (g_1,...,g_n)
&:=\sum_{s=1}^n (-1)^s S_s(g_1,...,g_n),\ \,\text{where}\ \,
\\
S_s(g_1,...,g_n)
&:=\sum_{1\le i_1<...<i_s\le n} a^{\pi_{i_1,...,i_s} (g_1,...,g_n)}\,.
\endaligned
\end{equation}
Then formula (\ref{formula for Dna}) can be written in the form
\begin{equation}\label{Dna via Delta na and a(g1,...,gn)}
\lbrack D^n a \rbrack(g_1,...,g_n)
=a^{g_1\cdots g_n}+[\Delta^n a](g_1,...,g_n)\,,
\end{equation}

Each sum $S_s(g_1,...,g_n)$ in (\ref{Delta na and a(g1,...,gn)})
is a symmetric function of $g_1,...,g_n$, that is,
$$
S_s(g_{j_1},...,g_{j_n})=S_s(g_1,...,g_n)
$$
for any $g_1,...,g_n\in G$ and any permutation $(j_1,...,j_n)$
of the indices $1,...,n$. Therefore, {\sl the function
$[\Delta^n a](g_1,...,g_n)=[D^n a](g_1,...,g_n)-a^{g_1\cdots g_n}$
is symmetric in $g_1,...,g_n$.}
\hfill $\bigcirc$
\end{remark}

For any {\em abelian} $G$, the function
$(g_1,...,g_n)\mapsto a^{g_1\cdots g_n}$ is symmetric,
and  the above remark 
implies the symmetry of $[D^n a](g_1,...,g_n)$.
For non-abelian $G$ this may be wrong; however, the following
property holds in general case.

\begin{lemma}\label{Lm: D^n a is symmetric for a in Pn}
Suppose that $D^{n+1} a=0$.
Then, for any natural $k\le n+1$, the functions $a^{g_1\cdots g_k}$ and
$[D^k a](g_1,...,g_k)$ are symmetric in $g_1,...,g_k\in G$.
\end{lemma}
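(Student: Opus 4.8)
The plan is to reduce the statement to the symmetry of a single family of functions and then induct on the degree. By Remark~\ref{Rmk: symmetry},
\[
[D^k a](g_1,\dots,g_k)=a^{g_1\cdots g_k}+[\Delta^k a](g_1,\dots,g_k)
\]
and $[\Delta^k a]$ is symmetric, so the two asserted symmetries are equivalent; it therefore suffices to prove that $D^{n+1}a=0$ forces $(g_1,\dots,g_k)\mapsto a^{g_1\cdots g_k}$ to be symmetric for every $k\le n+1$. I would do this by induction on $n$; the case $n=0$ is trivial, since $D^1a=0$ means $a^g=a$.

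The inductive step rests on a ``left-hand'' companion of the recursion (\ref{definition of iterated difference operators}): for every $g_0\in G$ and every $m\ge 0$,
\[
[D^m(a^{g_0}-a)](g_1,\dots,g_m)=[D^{m+1}a](g_0,g_1,\dots,g_m),
\]
which follows by an immediate induction on $m$ from (\ref{d homomorphisms}). Taking $m=n$ shows $D^n(a^{g_0}-a)=0$ for every $g_0$, so the induction hypothesis applies to each $a^{g_0}-a$ and gives, for all $k\le n$ and all $g_0$, that $(a^{g_0}-a)^{g_1\cdots g_k}=a^{g_0g_1\cdots g_k}-a^{g_1\cdots g_k}$ is symmetric in $g_1,\dots,g_k$; feeding this back through the displayed identity (and the equivalence above) also yields that $a^{g_1\cdots g_m}$ is symmetric in its last $m-1$ variables, for every $m\le n+1$.

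It remains to upgrade this partial symmetry to full symmetry of $a^{g_1\cdots g_k}$ for $k\le n+1$, which I would carry out by a second induction on $k$. For $k=n+1$: solving the closed formula (\ref{formula for Dna}) for $a^{g_1\cdots g_{n+1}}$ when $D^{n+1}a=0$ writes it as an integer linear combination of terms $a^{u}$ with $u$ a product of at most $n$ of the $g_j$ in increasing index order; by the inner hypothesis each such term depends only on the multiset of the $g_j$ occurring in it, so the combination is permutation-invariant. For $k\le n$: the relation $(a^{g_1}-a)^{g_2\cdots g_k}=a^{g_1g_2\cdots g_k}-a^{g_2\cdots g_k}$ is symmetric in $g_2,\dots,g_k$, and $a^{g_2\cdots g_k}$ is symmetric in $g_2,\dots,g_k$ by the inner hypothesis, so $a^{g_1\cdots g_k}$ is symmetric in $g_2,\dots,g_k$; only the transposition of $g_1$ and $g_2$ remains. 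This last point is, I expect, the heart of the matter and the main obstacle: one must kill the defect $a^{g_1g_2w}-a^{g_2g_1w}$, with $w$ a reordering of $g_3\cdots g_k$, and I would attack it by conjugating so that the two words differ by a commutator, then repeatedly using that $a^{h}-a$ sits one level lower in the filtration, until the defect is expressed through values of $D^1a$ on $[G,G]$ — invoking, if necessary, the characteristic-zero hypothesis to average over the relevant symmetric-group action. Once all the $a^{g_1\cdots g_k}$ are known to be symmetric, the symmetry of each $[D^k a]$ is immediate from the first paragraph.
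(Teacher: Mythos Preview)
Your proposal is incomplete: you correctly isolate the transposition of $g_1$ and $g_2$ (for $k\le n$) as ``the heart of the matter'', but you do not actually carry it out --- the commutator/averaging sketch is not an argument, and nothing in the setup forces the defect $a^{g_1g_2w}-a^{g_2g_1w}$ to vanish by those means. There is also a logical tangle in your double induction: the $k=n+1$ step invokes an ``inner hypothesis'' that the shorter products $a^u$ are already fully symmetric, which is exactly the $k\le n$ statement you attempt \emph{afterwards} and leave unresolved; read in the order written, the reasoning is circular, and in either order it rests on the unfinished swap.

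The paper's proof sidesteps all of this with a single device you missed: specialization. From $D^{n+1}a=0$ and~(\ref{Dna via Delta na and a(g1,...,gn)}) one has
\[
a^{g_1\cdots g_{n+1}}=-[\Delta^{n+1}a](g_1,\dots,g_{n+1}),
\]
and Remark~\ref{Rmk: symmetry} asserts the right side is symmetric --- this already gives the $k=n+1$ case directly, with no appeal to lower $k$. Then for any $k\le n+1$ one simply sets $g_{k+1}=\cdots=g_{n+1}=\mathbf e$ in the symmetric function $a^{g_1\cdots g_{n+1}}$ to obtain that $a^{g_1\cdots g_k}$ is symmetric in $g_1,\dots,g_k$. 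No induction on $n$ or on $k$ is needed, and the problematic $(g_1,g_2)$ transposition never has to be handled separately.
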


\begin{proof}
The assumption $D^{n+1} a=0$ implies that
$$
\aligned
a^{g_1\cdots g_{n+1}}
&=[D^{n+1} a](g_1,...,g_{n+1}) - [\Delta^{n+1} a](g_1,...,g_{n+1})\\
&=- [\Delta^{n+1} a](g_1,...,g_{n+1})\,.
\endaligned
$$
By Remark \ref{Rmk: symmetry}, $[\Delta^{n+1} a](g_1,...,g_{n+1})$
is symmetric in $g_1,...,g_{n+1}$; therefore,
$a^{g_1\cdots g_{n+1}}$ is symmetric as well.
For any $k\le n+1$, we have
$$
\displaystyle a^{g_1\cdots g_k}=\left. a^{g_1\cdots g_{n+1}}
\right |_{g_{k+1}=...=g_{n+1}=\mathbf e}\,;
$$
hence $a^{g_1\cdots g_k}$ is symmetric in $g_1,\ldots ,g_k$
and, by (\ref{Dna via Delta na and a(g1,...,gn)}),
$[D^k a](g_1,...,g_k)$ is symmetric.
\end{proof}

\begin{lemma}\label{Lm: [D{n+1}a](g1,...,g{n+1})=
[Dnag1](g2,...,g{n+1}) - [Dna](g2,...,g{n+1})}
Let $n\in\mathbb Z_+$ and $a\in A$. Then
\begin{equation}\label{[D{n+1}a](g1,...,g{n+1})=
[Dnag1](g2,...,g{n+1}) - [Dna](g2,...,g{n+1})}
\begin{split}
[D^{n+1} a](g_1,g_2,...,g_{n+1})=&
[D^n (a^{g_1})](g_2,...,g_{n+1}) - [D^n a](g_2,...,g_{n+1})\\
&\hskip60pt \text{\rm for all} \ \ g_1,g_2,...,g_{n+1}\in G\,.
\end{split}
\end{equation}
\end{lemma}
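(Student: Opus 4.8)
The plan is to derive the identity directly from the closed formula \eqref{formula for Dna} of Lemma \ref{Lm: formula for Dna}. It is convenient to first repackage that formula as
\[
[D^{m}a](g_1,\dots,g_m)=\sum_{I\subseteq\{1,\dots,m\}}(-1)^{|I|}\,a^{\pi_I(g_1,\dots,g_m)},
\]
where the sum runs over all subsets $I=\{i_1<\dots<i_s\}\subseteq\{1,\dots,m\}$, including $I=\emptyset$ (for which we put $\pi_\emptyset(g_1,\dots,g_m):=g_1\cdots g_m$, so that this summand is the leading term $a^{g_1\cdots g_m}$), and $\pi_I(g_1,\dots,g_m)$ is the product $g_1\cdots g_m$ with the factors indexed by $I$ deleted. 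This is only a rewriting of \eqref{formula for Dna}: the subset $I=\{1,\dots,m\}$ gives $\pi_I=\mathbf e$ and accounts for the summand $(-1)^m a$.

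First I would apply this formula with $m=n+1$ and split the sum over $I\subseteq\{1,\dots,n+1\}$ according to whether $1\in I$. The subsets with $1\notin I$ leave the leftmost factor $g_1$ in place, so, using that the action is on the right ($a^{g_1 h}=(a^{g_1})^{h}$) and shifting all indices down by one, these terms reassemble exactly into $[D^{n}(a^{g_1})](g_2,\dots,g_{n+1})$. The subsets of the form $I=\{1\}\cup J$ with $J\subseteq\{2,\dots,n+1\}$ also delete $g_1$, so $\pi_I(g_1,\dots,g_{n+1})$ equals $\pi_J$ evaluated at $(g_2,\dots,g_{n+1})$ after the same index shift, while $(-1)^{|I|}=(-1)^{|J|+1}=-(-1)^{|J|}$; hence these terms reassemble into $-[D^{n}a](g_2,\dots,g_{n+1})$. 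Adding the two contributions yields the asserted identity.

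An alternative is a short induction on $n$ using only the recursion $D^{n+1}=d_{n+1}D^{n}$ from \eqref{definition of iterated difference operators}. By \eqref{d homomorphisms} one has $[D^{n+1}a](g_1,\dots,g_{n+1})=[D^{n}a](g_1,\dots,g_n)^{g_{n+1}}-[D^{n}a](g_1,\dots,g_n)$. Applying the inductive hypothesis to $[D^{n}a](g_1,\dots,g_n)$ and distributing the superscript $g_{n+1}$ over the resulting difference (permissible since the $G$-action on $A$ is additive) rewrites the right-hand side as $Q(a^{g_1})-Q(a)$, where $Q(b):=[D^{n-1}b](g_2,\dots,g_n)^{g_{n+1}}-[D^{n-1}b](g_2,\dots,g_n)$. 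But \eqref{d homomorphisms} together with $D^{n}=d_nD^{n-1}$ says precisely that $Q(b)=[D^{n}b](g_2,\dots,g_{n+1})$, so the right-hand side equals $[D^{n}(a^{g_1})](g_2,\dots,g_{n+1})-[D^{n}a](g_2,\dots,g_{n+1})$, as required; the base case $n=0$ is immediate since \eqref{formula for Dna} gives $[D^{1}a](g_1)=a^{g_1}-a$.

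I do not expect a genuine obstacle here: the statement is a purely formal property of the operators $D^n$ on an arbitrary additive right $G$-module, and none of the hypotheses of the Main Theorem plays any role. The only thing that requires a little care is the index bookkeeping in the distinguished first slot --- reindexing subsets of $\{2,\dots,n+1\}$ as subsets of $\{1,\dots,n\}$ and tracking the extra sign produced by the deleted index $1$ --- or, in the inductive variant, keeping straight which variable occupies the ``last'' slot of \eqref{d homomorphisms} on each of the three occasions that formula is invoked.
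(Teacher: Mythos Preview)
Your proposal is correct. Your primary approach---rewriting \eqref{formula for Dna} as a sum over all subsets $I\subseteq\{1,\dots,n+1\}$ and splitting according to whether $1\in I$---is a genuinely different route from the paper's. The paper argues by induction on $n$ using only the recursion $D^{n+1}=d_{n+1}D^n$ and the definition \eqref{d homomorphisms}, which is precisely your ``alternative'' argument (the paper just writes out the algebra a little more explicitly, treating $n=0,1$ as separate base cases). Your subset-splitting argument is shorter and conceptually cleaner, since it reduces the identity to a single bijective reindexing with no induction; the price is that it leans on Lemma~\ref{Lm: formula for Dna}, which is itself proved by induction. The paper's inductive proof, by contrast, is self-contained from the definitions \eqref{d homomorphisms}--\eqref{definition of iterated difference operators} and never invokes the closed formula.
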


\begin{proof}
The proof is by induction in $n$. For $n=0,1$, formula
(\ref{[D{n+1}a](g1,...,g{n+1})=
[Dnag1](g2,...,g{n+1}) - [Dna](g2,...,g{n+1})})
looks as follows:
\begin{eqnarray}
\ & n=0:&\quad [D^1 a](g_1)=a^{g_1} - a\,;
\label{[D1a](g1) via a and ag1}\\
  & n=1:&\quad [D^{2} a](g_1,g_2)
                      =[D^1 (a^{g_1})](g_2) - [D^1 a](g_2)\,.
  \label{[D2 a](g1,g2) via a and ag1}
\end{eqnarray}
Formula (\ref{[D1a](g1) via a and ag1}) is just the definition
of $D^1$. Furthermore, we have:
$$
\aligned
\lbrack D^2 a\rbrack (g_1,g_2)&=\lbrack a^{g_1}-a \rbrack^{g_2}-\lbrack a^{g_1}-a\rbrack=a^{g_1g_2}-a^{g_2}-a^{g_1}+a
\\
&=[(a^{g_1})^{g_2}-a^{g_1}]-(a^{g_2}-a)
=[D^1 (a^{g_1})](g_2) - [D^1 a](g_2)\,,
\endaligned
$$
which coincides with (\ref{[D2 a](g1,g2) via a and ag1}).
This provides us with the base of induction.
\smallskip

\noindent Suppose now that for some $m\ge{2}$, the formula
(\ref{[D{n+1}a](g1,...,g{n+1})=
[Dnag1](g2,...,g{n+1}) - [Dna](g2,...,g{n+1})}) is fulfilled
for all $n\le m-1$. Let us show that it holds true for
$n=m$. According to (\ref{definition of iterated difference operators}),
$$
\aligned
\lbrack D^{m+1}a\rbrack (g_1,g_2,...,g_{m+1})
&=\{[D^m a](g_1,g_2,...,g_m)\}^{g_{m+1}} - [D^m a](g_1,g_2,...,g_m)\,,\\
[D^m a](g_2,...,g_{m+1})\hskip0.8cm
&=\{[D^{m-1} a](g_2,...,g_m)\}^{g_{m+1}} - [D^{m-1} a](g_2,...,g_m)\,,
\endaligned
$$
which implies
\begin{equation}\label{intermediate formula}
\aligned
& [D^{m+1}a](g_1,g_2,...,g_{m+1})+[D^m a](g_2,...,g_{m+1})\\
&\hskip40pt
=\{[D^m a](g_1,g_2,...,g_m)\}^{g_{m+1}} - [D^m a](g_1,g_2,...,g_m)\\
&\hskip80pt
+\{[D^{m-1} a](g_2,...,g_m)\}^{g_{m+1}} - [D^{m-1} a](g_2,...,g_m)\\
&\hskip40pt=\{[D^m a](g_1,g_2,...,g_m)+[D^{m-1} a](g_2,...,g_m)\}^{g_{m+1}} \\
&\hskip80pt
- \{[D^m a](g_1,g_2,...,g_m) + [D^{m-1} a](g_2,...,g_m)\}\,.
\endaligned
\end{equation}
By the induction hypothesis,
\begin{equation}\label{induction hypothesis}
[D^m a](g_1,g_2,...,g_m)
=[D^{m-1}(a^{g_1})](g_2,...,g_m)-[D^{m-1} a](g_2,...,g_m)\,.
\end{equation}
It follows from (\ref{intermediate formula}),
(\ref{induction hypothesis}), and (\ref{formula for Dna}) that
$$
\aligned
&[D^{m+1}a](g_1,g_2,...,g_{m+1})+[D^m a](g_2,...,g_{m+1})\\
&\hskip25pt=\{[D^m a](g_1,...,g_m)+[D^{m-1} a](g_2,...,g_m)\}^{g_{m+1}}\\
&\hskip55pt - \{[D^m a](g_1,...,g_m) + [D^{m-1} a](g_2,...,g_m)\}\\
=&\{[D^{m-1}(a^{g_1})](g_2,...,g_m)-[D^{m-1} a](g_2,...,g_m)
+[D^{m-1} a](g_2,...,g_m)\}^{g_{m+1}}\\
&\hskip5pt - \{[D^{m-1}(a^{g_1})](g_2,...,g_m)-[D^{m-1} a](g_2,...,g_m)
+ [D^{m-1} a](g_2,...,g_m)]\}\\
=&\{[D^{m-1}(a^{g_1})](g_2,...,g_m)\}^{g_{m+1}}
            -[D^{m-1}(a^{g_1})](g_2,...,g_m)\\
=& [D^m(a^{g_1})](g_2,...,g_{m+1})\,,
\endaligned
$$
which yields
$$
[D^{m+1}a](g_1,g_2,...,g_{m+1})
=[D^m(a^{g_1})](g_2,...,g_{m+1})-[D^m a](g_2,...,g_{m+1}).
$$
This completes the proof.
\end{proof}

\subsection{Polynomial-like elements}
\label{subsect: Polynomial-like elements}
\noindent From now on, whenever $A$ is a vector space
we assume that the given right $G$-action in $A$ is
{\em linear}, i.e., all the mappings (\ref{right G-action in A})
are linear operators. Then the homomorphisms $d_m$ and $D^n$
defined in (\ref{d homomorphisms}) and
(\ref{definition of iterated difference operators})
respectively, are linear operators.
\begin{definition}\label{Def: G-polynomials in A}
Let us denote by ${\mathcal P}_n={\mathcal P}_n(G,A)$ the kernel of the
homomorphism $D^{n+1}\colon\, A\to{\mathcal C}^{n+1}(G,A)$, i.e.,
\begin{equation*}\label{kernel of Dn}
\aligned
{\mathcal P}_n &={\mathcal P}_n(G,A)=\{a\in A\mid D^{n+1} a = 0\}\\
&=\{a\in A\mid [D^{n+1} a](g_1,...,g_{n+1}) = 0 \ \
\forall \, g_1,...,g_{n+1}\in G\} \ \ (n\in\mathbb Z_+)\,.
\endaligned
\end{equation*}
Clearly, ${\mathcal P}_n(G,A)$ is a subgroup of $A$
(a vector subspace whenever $A$ is a vector space with a linear $G$-action).
\smallskip

\noindent An element $p\in{\mathcal P}_n$ is said to be a
{\em $G$-polynomial in $A$ of order at most $n$},
or a {\em polynomial-like element} (of order at most $n$) in $A$.
It follows from (\ref{definition of iterated difference operators})
that
\begin{equation}\label{Pn is contained in P{n+1}}
{\mathcal P}_0\subseteq {\mathcal P}_1\subseteq\ldots
\subseteq {\mathcal P}_n\subseteq {\mathcal P}_{n+1}\subseteq \ldots\,.
\end{equation}
\end{definition}

\noindent Let $A^G$ denote the subgroup of $A$ consisting of all
$G$-invariant elements, i.e.,
\begin{equation*}\label{G-invariant elements}
A^G = \{a\in A\mid a^g=a, \ \ \forall g\in G\}\,.
\end{equation*}
If $A$ is a vector space and the given $G$-action in $A$ is linear
then $A^G$ is a vector subspace of $A$.
\smallskip

\begin{example}\label{Expl: polynomial-like elements in function spaces}
Let $G$ be an additive subgroup of $\mathbb R^r$ acting by translations
in the space $C(\mathbb R^r)$ of all (real or complex)
continuous functions on $\mathbb R^r$:
\begin{equation*}\label{translations}
f^g(x)=f(x+g) \quad\text{for all}\ \,
f\in C(\mathbb R^r)\,, \ x\in\mathbb R^r\,, \  g\in G\subseteq\mathbb R^r
\end{equation*}
(all the mappings $f\mapsto f^g$ are, in fact, automorphisms
of the algebra $C(\mathbb R^r)$).
Let $A\subseteq C(\mathbb R^r)$ be a
$G$-invariant subspace of $C(\mathbb R^r)$.
For instance, the following natural cases seem interesting:
\smallskip

\begin{itemize}

\item [$(a)$] $G=\mathbb R^r$ and $A=C(\mathbb R^r)$;

\item [$(b)$] $G\cong \mathbb Z^r$ is a lattice of rank $r$ in $\mathbb R^r$
and $A=C(\mathbb R^r)$;

\item [$(c)$] $G$ is as in $(b)$ and $A=\mathcal H(\mathbb R^r)$ is the space of all
harmonic functions on $\mathbb R^r$;

\item [$(d)$] $r=2m$ is even, $\mathbb R^r=\mathbb C^m$,
$G\cong \mathbb Z^{2m}$ is a lattice of rank $2m$ in $\mathbb C^m$,
and $A=\mathcal O(\mathbb C^m)$ is the space of all holomorphic functions
on $\mathbb C^m$.
\end{itemize}
\medskip

\noindent In case $(a)$ the space $A^G$ of all $G$-invariant elements of $A$
is just the field of constants (that is, either $A^G=\mathbb R$ or $A^G=\mathbb C$).
In cases $(b)-(d)$, $A^G$ is the space of all $G$-invariant functions
that are, respectively, $(b)$ continuous,
$(c)$ harmonic, $(d)$ holomorphic.
\smallskip

\noindent In fact, with help of
Proposition \ref{Prp: DnPn(G,AG) is contained in LnS(G,AG)}
proven in Subsection \ref{subsect: Iterated difference operators and polymorphisms}
below, one can see that in cases $(a)-(c)$
the space $\mathcal P_n(G,A)$ coincides with the space $A^G_n[x_1,...,x_r]$
of all polynomials in the standard coordinates
$x_1,...,x_r$ of degree at most $n$
with coefficients in $A^G$;
in case $(d)$ the space $A^G_n[z_1,...,z_m]$ of
polynomials in {\em complex} coordinates $z_1,...,z_m$
should be taken instead.
\end{example}

\begin{example}\label{Expl: polynomial-like meromorphic functions}
Let $G\subset\mathbb C$ be a lattice of rank $2$
and ${\mathcal M}={\mathcal M}({\mathbb C})$ be the field of meromorphic functions
of one complex variable $z\in\mathbb C$.
Then ${\mathcal P}_0(G,{\mathcal M})={\mathcal M}^G$ is just the field of elliptic
functions associated with the lattice $G$. It can be proven
that ${\mathcal P}_n(G,{\mathcal M})={\mathcal M}^G_n[z]$,
where ${\mathcal M}^G_n[z]$ stands for the space of all
polynomial in $z$ of degree at most $n$ with coefficients in the
field of elliptic functions ${\mathcal M}^G$.
\end{example}

\begin{example}\label{Expl: multiplicative
polynomial-like meromorphic functions}
It is interesting to take $A={\mathcal M}^*={\mathcal M}\setminus\{0\}$,
the multiplicative group of the field ${\mathcal M}$,
with a lattice $G\subset\mathbb C$ of rank $2$ acting by translations. Clearly,
${\mathcal P}_0(G,{\mathcal M}^*)=({\mathcal M}^*)^G$, the multiplicative group
of the field ${\mathcal M}^G$ of $G$-elliptic functions.

\end{example}

\begin{proposition}\label{Pn is G-invariant}
Each ${\mathcal P_n}(G,A)\subseteq A$ is a $G$-invariant subgroup
$($respectively, a $G$-invariant vector subspace whenever $A$ is
a vector space$)$.
\end{proposition}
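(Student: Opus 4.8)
The plan is to show that $p \in {\mathcal P}_n(G,A)$ implies $p^g \in {\mathcal P}_n(G,A)$ for every $g \in G$, i.e.\ that $D^{n+1}(p^g) = 0$ whenever $D^{n+1}p = 0$. The natural tool is Lemma~\ref{Lm: [D{n+1}a](g1,...,g{n+1})=
[Dnag1](g2,...,g{n+1}) - [Dna](g2,...,g{n+1})}, which relates $D^{n+1}$ applied to $a$ and $D^{n}$ applied to $a^{g_1}$. First I would record the immediate consequence of that lemma: if $D^{n+1}a = 0$ then, evaluating \eqref{[D{n+1}a](g1,...,g{n+1})=
[Dnag1](g2,...,g{n+1}) - [Dna](g2,...,g{n+1})} at arbitrary $g_1,\ldots,g_{n+1}$, we get $[D^n(a^{g_1})](g_2,\ldots,g_{n+1}) = [D^n a](g_2,\ldots,g_{n+1})$ for all $g_1$. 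Thus $D^n(a^{g}) = D^n a$ as elements of ${\mathcal C}^n(G,A)$, for every $g \in G$.

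Next I would leverage this together with the symmetry statement in Lemma~\ref{Lm: D^n a is symmetric for a in Pn}. Since $D^{n+1}a = 0$ forces $a \in {\mathcal P}_n \subseteq {\mathcal P}_{n+1}$, that lemma applies with $n$ replaced by $n+1$ (the hypothesis $D^{n+2}a = 0$ holds by \eqref{Pn is contained in P{n+1}}), and it tells us that $[D^{n+1}a](g_1,\ldots,g_{n+1})$ is symmetric in its arguments. Now apply Lemma~\ref{Lm: [D{n+1}a](g1,...,g{n+1})=
[Dnag1](g2,...,g{n+1}) - [Dna](g2,...,g{n+1})} once more, but this time to the element $a^g$: by linearity of all the operators involved and the observation of the first paragraph,
$$
[D^{n+1}(a^g)](g_1,\ldots,g_{n+1}) = [D^n((a^g)^{g_1})](g_2,\ldots,g_{n+1}) - [D^n(a^g)](g_2,\ldots,g_{n+1}).
$$
The second term equals $[D^n a](g_2,\ldots,g_{n+1})$ by the first paragraph. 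For the first term, write $(a^g)^{g_1} = a^{gg_1}$; I want to show $[D^n(a^{gg_1})](g_2,\ldots,g_{n+1}) = [D^n a](g_2,\ldots,g_{n+1})$ as well, which would give $D^{n+1}(a^g) = 0$.

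The main obstacle is exactly this last identity, because $D^n(a^{h}) = D^n a$ from paragraph one only directly covers $h \in G$ acting as a single group element on the right — and indeed $gg_1 \in G$, so $D^n(a^{gg_1}) = D^n a$ as cochains, hence their values at $(g_2,\ldots,g_{n+1})$ agree. So in fact the identity is immediate and no symmetry is needed; the argument closes. Let me therefore streamline: from $D^{n+1}a = 0$ and Lemma~\ref{Lm: [D{n+1}a](g1,...,g{n+1})=
[Dnag1](g2,...,g{n+1}) - [Dna](g2,...,g{n+1})} we deduce $D^n(a^h) = D^n a$ for all $h \in G$; applying the same lemma to $a^g$ and using this twice (with $h = gg_1$ and $h = g$) yields $[D^{n+1}(a^g)](g_1,\ldots,g_{n+1}) = D^n a(g_2,\ldots,g_{n+1}) - D^n a(g_2,\ldots,g_{n+1}) = 0$. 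Hence $a^g \in {\mathcal P}_n(G,A)$. Since ${\mathcal P}_n(G,A)$ is already known to be a subgroup (a subspace in the linear case), this proves $G$-invariance. The only point requiring a little care is the base case $n = 0$: there $D^1 a = 0$ means $a \in A^G$, and $A^G$ is visibly $G$-invariant, so the argument is consistent there too.
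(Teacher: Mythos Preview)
Your proof is correct and rests on the same key tool as the paper's, namely Lemma~\ref{Lm: [D{n+1}a](g1,...,g{n+1})=
[Dnag1](g2,...,g{n+1}) - [Dna](g2,...,g{n+1})}. The paper's argument is a one-line variant: it applies that lemma with $n$ replaced by $n+1$, obtaining
\[
[D^{n+1}(a^{g})](g_{1},\ldots,g_{n+1})
= [D^{n+1}a](g_{1},\ldots,g_{n+1}) + [D^{n+2}a](g,g_{1},\ldots,g_{n+1}) = 0+0,
\]
using that $D^{n+1}a=0$ forces $D^{n+2}a=0$; your version instead applies the lemma at level $n$ twice, first to extract $D^{n}(a^{h})=D^{n}a$ for all $h$, then to compute $D^{n+1}(a^{g})$. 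Both routes are equally short once streamlined (your detour through the symmetry lemma was indeed unnecessary, as you noticed), so this is essentially the same approach.
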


\begin{proof}
Since ${\mathcal P_n}(G,A)$ is the kernel of a homomorphism (respectively,
linear operator), we only need to check that
$a^g\in{\mathcal P_n}(G,A)$ whenever $a\in{\mathcal P_n}(G,A)$ and $g\in G$.
In view of Lemma \ref{Lm: [D{n+1}a](g1,...,g{n+1})=
[Dnag1](g2,...,g{n+1}) - [Dna](g2,...,g{n+1})} (with $n+2$ instead of $n$),
the assumption $D^{n+1} a=0$ implies
$$
[D^{n+1}(a^g)](g_1,...,g_{n+1})=
[D^{n+1}a](g_1,...,g_{n+1})+[D^{n+2} a](g,g_1,...,g_{n+1})=0+0=0\,,
$$
which concludes the proof.
\end{proof}

\begin{notation}\label{Not: commutator subgroup}
$[G,G]$ denotes the {\em commutator subgroup} of a group $G$ and
$\widetilde G=G/[G,G]$ is the {\bf abelianization} of $G$.
For every $g\in G$, let $\widetilde g$ denote the image of $g$ in
$\widetilde G$ under the natural epimorphism $G\to\widetilde G$.
\end{notation}

\noindent Our main aim in the remaining of this Section is to achieve the following theorem:

\begin{theorem}\label{Main Theorem}
Let $F$ be a field of characteristic $0$,
$G$ be a group, and $A$ be a $F$-vector space endowed with
a linear right $G$-action.
If $\widetilde G$ is finitely generated
and $\dim_F A^G<\infty$ then 
$\dim_F {\mathcal P}_n(G,A)<\infty\quad \text{for every} \ \, n\in\mathbb Z_+\,.$
\end{theorem}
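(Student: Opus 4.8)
The plan is to induct on $n$. The base case $n=0$ is the hypothesis $\dim_F \mathcal P_0(G,A) = \dim_F A^G < \infty$. For the inductive step, assume $\dim_F \mathcal P_{n-1}(G,A) < \infty$ and try to bound $\dim_F \mathcal P_n(G,A)$. The key structural tool is Lemma \ref{Lm: [D{n+1}a](g1,...,g{n+1})=[Dnag1](g2,...,g{n+1}) - [Dna](g2,...,g{n+1})}, which says $[D^{n+1}a](g_1,g_2,\dots,g_{n+1}) = [D^n(a^{g_1})](g_2,\dots,g_{n+1}) - [D^n a](g_2,\dots,g_{n+1})$. Reading this for $a \in \mathcal P_n(G,A)$ (so the left side vanishes), we learn that $D^n(a^{g_1}) = D^n a$ for every $g_1 \in G$; equivalently, $a^{g_1} - a \in \mathcal P_{n-1}(G,A)$ for every $g_1 \in G$. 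In other words, the "derivative" map $a \mapsto (g \mapsto a^g - a) = D^1 a$ sends $\mathcal P_n(G,A)$ into cochains valued in $\mathcal P_{n-1}(G,A)$, and its kernel is exactly $\mathcal P_0(G,A) = A^G$.

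So the strategy is to show that the image of $\mathcal P_n(G,A)$ under $D^1$ — viewed as a subgroup of $\mathcal C^1(G, \mathcal P_{n-1}(G,A))$ — is finite dimensional; combined with $\dim_F \ker(D^1|_{\mathcal P_n}) = \dim_F A^G < \infty$, this gives $\dim_F \mathcal P_n(G,A) < \infty$. This is where finite generation of $\widetilde G = G/[G,G]$ and the full strength of the earlier symmetry lemmas must enter. First, by Lemma \ref{Lm: D^n a is symmetric for a in Pn}, for $a \in \mathcal P_n(G,A)$ the function $a^{g_1\cdots g_k}$ is symmetric in $g_1,\dots,g_k$ for $k \le n+1$; in particular $a^{gh} = a^{hg}$ for all $g,h$, so $a^{[g,h]} = a$, i.e. $a$ is invariant under the commutator subgroup $[G,G]$. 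Hence the $G$-action on $\mathcal P_n(G,A)$ factors through $\widetilde G$, and we may as well replace $G$ by the finitely generated abelian group $\widetilde G$ from the start. Writing $g_1,\dots,g_r$ for generators of $\widetilde G$ (or of a finite-index free abelian subgroup, after handling torsion), the cochain $D^1 a \in \mathcal C^1(\widetilde G, \mathcal P_{n-1})$ is a crossed homomorphism (since $D^2 a = 0$ forces $(D^1 a)(gh) = (D^1 a)(g)^h + (D^1 a)(h)$), and because $\widetilde G$ acts on $\mathcal P_{n-1}$ through a finite-dimensional space, such a crossed homomorphism is determined by finitely many values — its values on the generators $g_1,\dots,g_r$. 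This shows $D^1$ maps $\mathcal P_n(G,A)$ into $\mathcal P_{n-1}(G,A)^r$ (roughly), an injection modulo $A^G$, giving the bound $\dim_F \mathcal P_n \le \dim_F A^G + r\cdot\dim_F \mathcal P_{n-1}$.

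To make the last step rigorous one really wants the machinery promised in the later subsections (polymorphisms, the embedding of $\mathcal P_n$ into cochain groups, and the identification of $D^n \mathcal P_n$ with symmetric multilinear-type objects), so a cleaner inductive formulation is: show that $D^n$ embeds $\mathcal P_n(G,A)/\mathcal P_{n-1}(G,A)$ into the space of symmetric functions $\widetilde G^n \to A^G$ that are crossed-homomorphism-like in each variable, and that this latter space is finite dimensional because $A^G$ is finite dimensional and $\widetilde G$ is finitely generated (so an additive-type map out of $\widetilde G^n$ is pinned down by its values on tuples of generators). Then $\dim_F \mathcal P_n(G,A) \le \dim_F \mathcal P_{n-1}(G,A) + C(r,n)\dim_F A^G < \infty$ by induction.

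\textbf{The main obstacle.} The genuinely delicate point is controlling $D^n a$ on all of $\widetilde G^n$ from finitely many values: $D^n a$ is \emph{not} multilinear, and is only a crossed homomorphism in its last variable for fixed earlier arguments. One must use the symmetry (Lemma \ref{Lm: D^n a is symmetric for a in Pn}) to propagate the crossed-homomorphism property to every variable, then argue that a function on $\widetilde G^n$ that is a crossed homomorphism in each slot, valued in a fixed finite-dimensional $A^G$, is determined by its restriction to $\{g_1,\dots,g_r\}^n$ — which is exactly what the "polymorphism" formalism in Subsection \ref{subsect: Polymorphisms} is designed to encode. Handling the possible torsion in $\widetilde G$ (where a crossed homomorphism into a characteristic-zero space kills torsion, so only the free part matters) is a minor wrinkle that the characteristic-$0$ hypothesis is there to smooth over.
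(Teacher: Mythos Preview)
Your $D^1$-based approach is correct and genuinely different from the paper's. A small glitch: you justify the crossed-homomorphism identity $(D^1 a)(gh)=[(D^1 a)(g)]^{h}+(D^1 a)(h)$ by ``$D^2 a=0$'', but for $a\in\mathcal P_n$ with $n\ge 2$ one has $D^2 a\ne 0$. The identity nonetheless holds \emph{always}, by direct computation: $(a^g-a)^h+(a^h-a)=a^{gh}-a$. With that fix, your argument goes through: Lemma~\ref{Lm: D^n a is symmetric for a in Pn} gives $a^{gh}=a^{hg}$ on $\mathcal P_n$, so the $G$-action on $\mathcal P_n$ (hence on $\mathcal P_{n-1}$) factors through $\widetilde G$; then $D^1$ embeds $\mathcal P_n/A^G$ into the space of crossed homomorphisms $\widetilde G\to\mathcal P_{n-1}$, which is determined by values on a finite generating set, yielding $\dim\mathcal P_n\le\dim A^G+(\text{number of generators})\cdot\dim\mathcal P_{n-1}$.

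The paper instead uses $D^n$ rather than $D^1$: it proves (Proposition~\ref{Prp: DnPn(G,AG) is contained in LnS(G,AG)}) that for $a\in\mathcal P_n$ the cochain $D^n a$ lands in $A^G$ and is a genuine \emph{polymorphism} (additive in each slot, not merely a crossed homomorphism---since the target $A^G$ carries the trivial action, ``crossed'' collapses to ``additive''). This gives the exact sequence $0\to\mathcal P_{n-1}\to\mathcal P_n\to\mathcal L_n^{\mathcal S}(G,A^G)$, and finiteness follows from $\dim\mathcal L_n(\widetilde G,A^G)<\infty$. So your ``main obstacle'' paragraph misreads the situation: $D^n a$ \emph{is} multilinear once one knows its values are $G$-invariant, and establishing that is exactly what the coboundary/polymorphism machinery in Subsections~\ref{subsect: Coboundary operators and iterated difference operators}--\ref{subsect: Iterated difference operators and polymorphisms} accomplishes. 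Your route trades that machinery for a simpler recursion via $D^1$; the paper's route buys the sharper bound $\dim\mathcal P_n\le(\dim A^G)\binom{n+r}{r}$ of Proposition~\ref{estimate-dim-Pn}, whereas yours gives a geometric-series bound of order $(\dim A^G)\cdot r^n$.
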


\noindent To prove this theorem,
we show that the quotient spaces
${\mathcal P}_n(G,A)/{\mathcal P}_{n-1}(G,A)$ \ ($n\in\mathbb N$)
may be embedded into an appropriated vector space of
$A^G$-valued {\em symmetric polylinear forms} on the group $G$.
Under certain conditions,
the latter space is of finite dimension, which makes it possible
to use the induction in $n$. To follow this idea,
we need to define certain spaces of polylinear forms
on $G$ and establish some relations between the
iterated difference operators $D^n$ and the coboundary operators $\delta$
related to the {\em trivial} left $G$ action in $A$.
We will do this in the next Subsections \ref{subsect: Polymorphisms}, \ref{subsect: Embeddings of polynomials to cochains with values in AG}, \ref{subsect: Coboundary operators and iterated difference operators} and \ref{subsect: Iterated difference operators and polymorphisms}.
\subsection{Group polymorphisms}
\label{subsect: Polymorphisms}
In this Subsection, we introduce the concept of polylinear forms on the group $G$ and its related properties.
\begin{definition}\label{Def: group polymorphism}
Let $G$ be a group and $B$ be an abelian group.
A $B$-valued {\em polymorphism}, or,
more precisely, {\em $n$-morphism} of $G$
is a function
$$L\colon\, \underset{n}{\underbrace{G\times\cdots\times G}}\to B$$
such that for each $i\in \{1,...,n\}$, the condition
\begin{equation}\label{polymorphism}
\aligned
L(g_1,...,&g_{i-1},g'g'',g_{i+1},...,g_n)\\
&=L(g_1,...,g_{i-1},g',g_{i+1},...,g_n)
+L(g_1,...,g_{i-1},g'',g_{i+1},...,g_n)
\endaligned
\end{equation}
is fulfilled for every $g_1,...,g_{i-1},g',g'',g_{i+1},...,g_n\in G$.
\smallskip

\noindent The set $\mathcal L_n(G,B)$ of all $B$-valued
$n$-morphisms of $G$ is an abelian group.
If $B$ is a vector space over a field $F$
then ${\mathcal L}_n(G,B)$ is a vector space over $F$ as well;
in this case polymorphisms $L\in {\mathcal L}_n(G,B)$
are also said to be $B$-valued {\em polylinear}
(or $n$-{\em linear}) {\em forms} on $G$.
\smallskip

\noindent It is convenient to set $\mathcal L_0(G,B)=B$.
\end{definition}

\noindent We skip the proof of the following quite elementary lemma, leaving this as an exercise for the reader.

\begin{lemma}\label{Lm: polymorphisms of Zr}
$(a)$ Let $G$ be a free abelian group of rank $r$ and
\begin{equation}\label{polymorphism of Zr}
L\colon\,
\underset{n}{\underbrace{G\times\cdots\times G}}\to B
\end{equation}
be a $n$-morphism of $G$ to an abelian group $B$.
Take any free basis $\mathbf h_1,...,\mathbf h_r$ of $G$ and set
\begin{equation}\label{the values of L on (hi1,...,hin)}
b_{i_1,...,i_n}=L(\mathbf h_{i_1},...,\mathbf h_{i_n})\,, \ \
1\le i_1,...,i_n\le r\,.
\end{equation}
Then for any $g_1,...,g_n\in G$ we have
\begin{equation*}\label{L(g1,...,gn) via L(hi1,...,hin)}
\aligned
L(g_1,...,g_n)
&=\sum_{1\le i_1,...,i_n\le r} g_{1,i_1}\cdots g_{n,i_n}\,
\cdot L(\mathbf h_{i_1},...,\mathbf h_{i_n})\\
&=\sum_{1\le i_1,...,i_n\le r} g_{1,i_1}\cdots g_{n,i_n}\, \cdot b_{i_1,...,i_n}\,,
\endaligned
\end{equation*}
where $g_{k,1},...,g_{k,r}\in{\mathbb Z}$ are the coordinates of the element
$g_k\in G$ represented in the basis $\mathbf h_1,...,\mathbf h_r$ \ $(k=1,...,n)$.
\smallskip

\noindent $(b)$ For any choice of $r^n$ elements
$b_{i_1,...,i_n}\in B\ \ (1\le i_1,...,i_n\le r)$,
there exists a unique $n$-morphism {\rm (\ref{polymorphism of Zr})}
satisfying {\rm (\ref{the values of L on (hi1,...,hin)})}.
\smallskip

\noindent $(c)$ Let $G\subset\mathbb R^r$ be a lattice of rank $r$,
$\mathbf h_1,...,\mathbf h_r$ be a basis of $G$, and
$L$ be a $n$-morphism of $G$
to a vector space $B$. Then there exists a unique
$B$-valued $n$-linear form
\begin{equation*}\label{B-valued n-linear form defined by n-morphism}
{\frak L}\colon\,
\underset{n}{\underbrace{\mathbb R^r\times\cdots\times\mathbb R^r}}\to B
\end{equation*}
that extends $L$ from $G$ to $\mathbb R^r$ $($i. e., $\frak L$ satisfies
${\frak L}|_G=L$$)$. This form $\frak L$ is defined by
\begin{equation*}\label{extention of L to Rr}
\aligned
{\frak L}(v_1,...,v_n)
&=\sum_{1\le i_1,...,i_n\le r} v_{1,i_1}\cdots v_{n,i_n}\,
L(\mathbf h_{i_1},...,\mathbf h_{i_n})\\
&=\sum_{1\le i_1,...,i_n\le r}
v_{1,i_1}\cdots v_{n,i_n}\,b_{i_1,...,i_n}\,,
\endaligned
\end{equation*}
where $v_{k,1},...,v_{k,r}\in{\mathbb R}$
are the coordinates of the vector $v_k\in{\mathbb R}^r$ represented
in the basis $\mathbf h_1,...,\mathbf h_r$ \ $(k=1,...,n)$.
\end{lemma}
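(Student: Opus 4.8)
The plan is to handle the three parts in sequence, each resting on the elementary principle that an additive (respectively $\mathbb{R}$-linear) map out of a free $\mathbb{Z}$-module (respectively a finite-dimensional real vector space) is determined by, and may be freely prescribed on, a basis.

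For $(a)$, I would first upgrade the additivity in $(\ref{polymorphism})$ to $\mathbb{Z}$-homogeneity in each slot: setting $g'=g''=\mathbf{e}$ gives $L(\ldots,\mathbf{e},\ldots)=0$; induction then gives $L(\ldots,g^m,\ldots)=mL(\ldots,g,\ldots)$ for $m\ge 0$, and $gg^{-1}=\mathbf{e}$ forces $L(\ldots,g^{-1},\ldots)=-L(\ldots,g,\ldots)$, so the identity holds for all $m\in\mathbb{Z}$. Writing each $g_k=\mathbf{h}_1^{g_{k,1}}\cdots\mathbf{h}_r^{g_{k,r}}$ with $g_{k,j}\in\mathbb{Z}$ the coordinates (unique, since the $\mathbf{h}_j$ form a free basis), I would then expand $L(g_1,\ldots,g_n)$ one slot at a time: additivity and $\mathbb{Z}$-homogeneity in the first slot turn it into $\sum_{i_1}g_{1,i_1}L(\mathbf{h}_{i_1},g_2,\ldots,g_n)$, then the same for the second slot, and so on; after $n$ steps the stated formula appears.

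For $(b)$, uniqueness is immediate from $(a)$. For existence I would define $L(g_1,\ldots,g_n):=\sum_{1\le i_1,\ldots,i_n\le r}g_{1,i_1}\cdots g_{n,i_n}\,b_{i_1,\ldots,i_n}$ using the coordinate functions $g\mapsto(g_{\cdot,1},\ldots,g_{\cdot,r})$. Since $G$ is free abelian these coordinate functions are additive, i.e. $(g'g'')_{\cdot,j}=g'_{\cdot,j}+g''_{\cdot,j}$; freezing all but one slot exhibits $L$ in that slot as $g\mapsto\sum_j g_{\cdot,j}\,c_j$ for suitable $c_j\in B$, which is additive, so $(\ref{polymorphism})$ holds. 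Evaluating on $(\mathbf{h}_{i_1},\ldots,\mathbf{h}_{i_n})$, whose coordinate vectors are the standard unit vectors of $\mathbb{Z}^r$, leaves only the term indexed by $(i_1,\ldots,i_n)$, which equals $b_{i_1,\ldots,i_n}$. For $(c)$, the basis $\mathbf{h}_1,\ldots,\mathbf{h}_r$ of the lattice $G$ is simultaneously an $\mathbb{R}$-basis of $\mathbb{R}^r$; I would define ${\frak L}$ by the displayed formula with the real coordinates $v_{k,j}$, note that these coordinate functions are now $\mathbb{R}$-linear so that ${\frak L}$ is $\mathbb{R}$-multilinear by the same slot-freezing argument, observe that for $v_k=g_k\in G$ the real coordinates coincide with the integer coordinates $g_{k,j}$ so that ${\frak L}|_G=L$ by $(a)$, and obtain uniqueness because any $\mathbb{R}$-multilinear extension of $L$ must agree with it on all tuples of basis vectors (which lie in $G$) and hence, by multilinearity, everywhere on $\mathbb{R}^r\times\cdots\times\mathbb{R}^r$.

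I do not expect a genuine obstacle here; the only points that reward care are the passage from additivity to $\mathbb{Z}$-homogeneity (the vanishing at $\mathbf{e}$ and the sign on inverses) and the use, in $(b)$, of the fact that on a \emph{free} abelian group the coordinate functions are additive — this is exactly what makes the explicitly defined $L$ satisfy the polymorphism identity, and it is where freeness of the basis is essential rather than cosmetic.
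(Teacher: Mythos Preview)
Your argument is correct and is exactly the standard elementary route one would expect. Note that the paper does not actually prove this lemma: it explicitly states that the proof is skipped and left as an exercise, so there is no authors' proof to compare against; your write-up would serve perfectly well as the omitted verification.
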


The next lemma collects some simple and useful properties of $B$-valued $n$-morphisms on a  (possibly non-abelian) group $G$.
\begin{lemma}\label{Lm: spaces of polylinear forms are of finite dim}
$(a)$ ${\mathcal L}_n(G,B)\subseteq {\mathcal C}^n(G,B)$ for every $n\in\mathbb N$;
in other words, every $n$-morphism $L$ is also a $n$-cochain,
that is, $L(g_1,...,g_n)=0$ whenever one of the elements $g_1,...,g_n$
equals $\mathbf e$.
\smallskip

\noindent $(b)$ For any $L\in \mathcal L_n(G,B)$,
$L(g_1,...,g_n)=0$ whenever one of the elements $g_1,...,g_n$
belongs to the commutator subgroup $[G,G]$.
\smallskip

\noindent $(c)$ Consider any $L\in \mathcal L_n(G,B)$ and
$\widetilde {g_1},...,\widetilde {g_n}\in\widetilde G=G/[G,G]$.
Take arbitrary representatives  $g_i\in\widetilde{g_i}$, $1\le i\le n$,
and set
\begin{equation*}\label{factorization of n-homomorphism}
\widetilde L(\widetilde {g_1},...,\widetilde {g_n})
:=L(g_1,...,g_n)\,.
\end{equation*}
Then $\widetilde L$ is a well-defined $B$-valued $n$-morphism
of the abelianization $\widetilde G$ of $G$.
The correspondence $L\mapsto \widetilde L$ defines an isomorphism
of groups
\begin{equation*}\label{isomorphism mu}
\mu\colon {\mathcal L}_n(G,B)\ni L\mapsto\mu(L)
=\widetilde L\in {\mathcal L}_n(\widetilde G,B)\,.
\end{equation*}
Moreover, $\mu$ is an isomorphism of vector spaces whenever $B$ is a vector space.
\smallskip

\noindent $(d)$ Suppose that $B$ is a vector space over a field
$F$ of characteristic $0$,
$\dim_F B<\infty$, and the abelianization $\widetilde G$ of $G$
is finitely generated. Then
\footnote{Although this fact is simple and well-known, we treat it here just for completeness
of the proof of our main result. Notice that the assumption $\charac F=0$
could be omitted.}
\begin{equation*}\label{dimension of n-linear form space}
\dim_F {\mathcal L}_n(G,B)=\dim_F {\mathcal L}_n(\widetilde G,B)<\infty
\quad\text{for every} \ n\in\mathbb N\,.
\end{equation*}
\end{lemma}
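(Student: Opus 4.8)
The statement in part $(d)$ is an immediate consequence of parts $(b)$ and $(c)$ together with the structure theorem for finitely generated abelian groups, so the plan is to reduce everything to a direct count of $n$-morphisms on $\widetilde G$. First I would invoke part $(c)$: the isomorphism $\mu\colon {\mathcal L}_n(G,B)\to{\mathcal L}_n(\widetilde G,B)$ shows $\dim_F{\mathcal L}_n(G,B)=\dim_F{\mathcal L}_n(\widetilde G,B)$, so it suffices to bound the latter. Thus from now on I may assume $G$ itself is a finitely generated abelian group, written additively.

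\emph{Second step: kill the torsion.} Write $\widetilde G\cong{\mathbb Z}^r\oplus T$ with $T$ finite. Part $(b)$ of the lemma (applied inside $\widetilde G$, whose commutator subgroup is trivial, but used in the sharper bilinearity form) gives at once that any $L\in{\mathcal L}_n(\widetilde G,B)$ vanishes as soon as one argument is a torsion element: indeed if $mg=0$ then $mL(\dots,g,\dots)=L(\dots,mg,\dots)=0$ in $B$, and since $\charac F=0$ the vector space $B$ is torsion-free, forcing $L(\dots,g,\dots)=0$. Hence every $n$-morphism of $\widetilde G$ factors through the projection $\widetilde G\to\widetilde G/T\cong{\mathbb Z}^r$, and this identification is again an isomorphism of the corresponding ${\mathcal L}_n$ spaces. (This is the one place the hypothesis $\charac F=0$ is used, and as the footnote notes it can be dropped with a little more care — I would not belabour that.)

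\emph{Third step: the free abelian case.} Now $G={\mathbb Z}^r$, and here I simply quote Lemma~\ref{Lm: polymorphisms of Zr}: fixing a free basis $\mathbf h_1,\dots,\mathbf h_r$, an $n$-morphism is uniquely determined by, and can be prescribed arbitrarily through, the $r^n$ values $b_{i_1,\dots,i_n}=L(\mathbf h_{i_1},\dots,\mathbf h_{i_n})\in B$. Therefore the evaluation map ${\mathcal L}_n({\mathbb Z}^r,B)\to B^{r^n}$ is an isomorphism of $F$-vector spaces, whence $\dim_F{\mathcal L}_n({\mathbb Z}^r,B)=r^n\dim_F B$, which is finite since $\dim_F B<\infty$. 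Chaining the three isomorphisms gives
\begin{equation*}
\dim_F{\mathcal L}_n(G,B)=\dim_F{\mathcal L}_n(\widetilde G,B)=\dim_F{\mathcal L}_n({\mathbb Z}^r,B)=r^n\dim_F B<\infty,
\end{equation*}
which is the claim.

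\emph{Main obstacle.} There is no serious obstacle — the proof is essentially bookkeeping. The only point requiring a moment's thought is the torsion-killing step, where one must notice that divisibility of $L(\dots,g,\dots)$ by arbitrarily large integers in a $\charac 0$ vector space forces it to vanish; everything else is a reference to the already-stated Lemma~\ref{Lm: polymorphisms of Zr} and to part $(c)$. The slight subtlety worth a sentence is making sure the factorization through $\widetilde G/T$ is genuinely an isomorphism on ${\mathcal L}_n$ (injectivity is the vanishing just proved; surjectivity is clear since any morphism on the quotient pulls back), so that the dimension is preserved and not merely bounded.
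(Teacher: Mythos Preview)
Your proof of part $(d)$ is correct and follows exactly the paper's route: reduce to $\widetilde G$ via $(c)$, kill torsion using $\charac F=0$, then count on $\mathbb Z^r$ --- the paper builds an explicit basis $\{\ell_I^i\}$ where you simply quote Lemma~\ref{Lm: polymorphisms of Zr}, but both arrive at $\dim = r^n\dim_F B$. You did not write out parts $(a)$--$(c)$ (the paper handles each in one line), and your aside crediting the torsion-killing step to ``part $(b)$'' is a misreference --- part $(b)$ concerns commutators, not torsion --- but the actual argument you give ($mL(\dots,g,\dots)=0$ in a torsion-free $B$) is precisely what the paper does.
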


\begin{proof}
$(a)$ It suffices to show that $L(\mathbf e, g_2,...,g_n)=0$. The latter
relation follows from the following identities:
$$
L(\mathbf e, g_2,...,g_n)=L(\mathbf e\cdot\mathbf e, g_2,...,g_n)
=L(\mathbf e, g_2,...,g_n)+L(\mathbf e, g_2,...,g_n)\,.
$$
$(b)$ Using (\ref{polymorphism}) and the fact that $B$ is abelian, we get that for any $h_1, h_2, g_2, \ldots, g_n \in G$, the following identities hold:
$$
L(h_1h_2h_1^{-1}h_2^{-1}, g_2,...,g_n)
=L(h_1h_1^{-1}h_2h_2^{-1}, g_2,...,g_n)=L(\mathbf e, g_2,...,g_n)=0\,.
$$
This implies $(b)$.
\smallskip

\noindent $(c)$ This is an immediate consequence of $(b)$.
\smallskip

\noindent $(d)$ Since $\widetilde G$ is a finitely generated abelian
group, we have $\widetilde G\cong \mathbb Z^r\oplus K$, where
$r\in\mathbb Z_+$ and $K$ is a finite abelian group.
For any ${\widetilde L}\in {\mathcal L}_n(\widetilde G,B)$
we have ${\widetilde L}(\widetilde {g_1},..., \widetilde {g_n})=0$
whenever at least one of the elements
$\widetilde {g_1},..., \widetilde {g_n}$
belongs to the subgroup $K$.
Indeed, assuming, for instance, that $\widetilde {g_1}$ is an element
of finite order $m \in \mathbb{N}$, we see that
$$
m\cdot{\widetilde L}(\widetilde {g_1},..., \widetilde {g_n})
={\widetilde L}(\widetilde {g_1}^m,..., \widetilde {g_n})=
{\widetilde L}(\widetilde {\mathbf e},..., \widetilde {g_n})=0\,,
$$
which implies ${\widetilde L}(\widetilde {g_1},..., \widetilde {g_n})=0$ (note that $\charac F=0$). It follows from $(c)$ and the above fact that
$$
\mathcal L_n(G,B)\cong\mathcal L_n(\widetilde G,B)\cong \mathcal L_n(\mathbb Z^r,B)\,.
$$
Therefore, it suffices to show that $\dim_F \mathcal L_n(\mathbb Z^r,B)<\infty$.
\smallskip

\noindent Let $\mathbf h_1=(1,0,...,0),...,\mathbf h_r=(0,...,0,1)$
be the standard basis of $\mathbb Z^r$ and
$\mathbf b^1,...,\mathbf b^s$ be a basis in \nolinebreak $B$.
\smallskip

\noindent Let $\mathcal I$ be the family of all finite ordered sets
$I=(i_1,...,i_n)$ consisting of $n$ natural numbers
$i_1,...,i_n$ such that $1\le i_1,...,i_n\le r$
(clearly, $\#\mathcal I = r^n$).
For each $I=(i_1,...,i_n)\in\mathcal I$ and each $i=1,...,s$,
we define the $n$-linear form $\ell_I^i\in{\mathcal L}^n(\mathbb Z^r,B)$
as follows:
\begin{equation}\label{basis in space of polylinear forms}
\ell_I^i(\mathbf h_{j_1},...,\mathbf h_{j_n})
    =\left\{
        \aligned
            \ &\mathbf b^i \ \ \text{if} \ \, (j_1,...,j_n)=I\,,\\
              & 0\hskip0.44cm\text{otherwise}\,;
        \endaligned
      \right.
\end{equation}
These $sr^n$ forms $\ell_I^i$ constitute a basis of
${\mathcal L}_n(\mathbb Z^r,B)$.\footnote{Obviously, these forms $\ell_I^i$ are linearly independent.}
Indeed, for any $L\in{\mathcal L}_n(\mathbb Z^r,B)$, it suffices to show that $L$ is a linear combination of $\ell_I^i$. Let us consider $I=(i_1,...,i_n)\in\mathcal I$.
Since $L(\mathbf h_{i_1},...,\mathbf h_{i_n})\in B$
and $\{\mathbf b^1,...,\mathbf b^s\}$ is a basis of $B$, we have
\begin{equation}\label{LI(hi1,...,hin) via basis b1,...,bs}
L(\mathbf h_{i_1},...,\mathbf h_{i_n})
=\sum_{i=1}^s a_I^i\,\mathbf b^i
\quad\text{for some} \ \, a_I^1,...,a_I^s\in F\,.
\end{equation}
It follows from (\ref{basis in space of polylinear forms}) and (\ref{LI(hi1,...,hin) via basis b1,...,bs}) that
$$ L(\mathbf h_{i_1},...,\mathbf h_{i_n})
=\sum_{i=1}^s a_I^i\,\mathbf \ell_I^i(\mathbf h_{i_1},...,\mathbf h_{i_n})=\sum_{i=1}^s\sum_{J\in\mathcal I}  a_J^i\,\ell_J^i(\mathbf h_{i_1},...,\mathbf h_{i_n}).$$
This proves that
$$
L=\sum_{I\in\mathcal I} \sum_{i=1}^s a_I^i\,\ell_I^i\,,
$$
which concludes the proof.
\end{proof}
\subsection{Embeddings
$\left[\mathcal P^n(G,A)/\mathcal P^{n-1}(G,A)\right]\hookrightarrow\mathcal C^n(G,A^G)$}
\label{subsect: Embeddings of polynomials to cochains with values in AG}
By Definition \ref{Def: G-polynomials in A}, the iterated
difference operator $D^{n+1}$ annihilates the subgroup
${\mathcal P}_n={\mathcal P}_n(G,A)=\ker D^{n+1}\subseteq A$ of all polynomial-like
elements of order at most $n$. However, the preceding iterated
difference operator $D^n$ may be non-zero on ${\mathcal P}_n$.
In this section, we study certain properties of the restriction
\begin{equation*}\label{restriction of Dn to Pn}
D^n_{\mathcal P_n}\Def \left. D^n\,\right|_{\mathcal P_n}
\colon\mathcal P_n\to\mathcal C^n(G,A)
\end{equation*}
of the operator $D^n\colon A\to\mathcal C^n(G,A)$
to the subgroup ${\mathcal P}_n\subseteq A$.
\smallskip

\noindent According to our notation, ${\mathcal C}^n(G,A^G)$ is
the group of all $n$-cochains of $G$ with values in
the subgroup $A^G\subseteq A$ consisting of all $G$-invariant
elements in $A$. Clearly, ${\mathcal C}^n(G,A^G)$ is
a subgroup of the group $\mathcal C^n(G,A)$
(if $A$ is a vector space then ${\mathcal C}^n(G,A^G)$ is
a vector subspace of the vector space $\mathcal C^n(G,A)$).

\begin{lemma}
\label{Lm: mapping of polynomials to invariant cochains}
The image of $D^n_{{\mathcal P}_n}$ is contained in the subgroup
${\mathcal C}^n(G,A^G)\subseteq {\mathcal C}^n(G,A)$, i.e., \
$$
D^n({\mathcal P_n})\subseteq \mathcal C^n(G,A^G)\,.
$$
That is, $D^n_{{\mathcal P}_n}$ may be regarded as a group homomorphism
$($a linear operator in case of vector spaces$)$
\begin{equation*}\label{operator DnP}
D^n_{{\mathcal P}_n}\colon {\mathcal P}_n(G,A)\to{\mathcal C}^n(G,A^G)
\end{equation*}
acting from the group ${\mathcal P}_n(G,A)$ to the group ${\mathcal C}^n(G,A^G)$.
\smallskip

\noindent By {\rm (\ref{Pn is contained in P{n+1}}),
(\ref{restriction of Dn to Pn})}, and
{\rm Definition \ref{Def: G-polynomials in A}},
the kernel $\ker D^n_{{\mathcal P}_n}$ of $D^n_{{\mathcal P}_n}$
coincides with $\mathcal P_{n-1}(G,A)$. Thus, the homomorphism
$D^n_{{\mathcal P}_n}$  gives rise to the exact sequence
\begin{equation}\label{1st exact sequence}
\CD
0@>>>{\mathcal P}_{n-1}(G,A)@>>>{\mathcal P}_n(G,A)@>{D^n_{{\mathcal P}_n}}>>
{\mathcal C}^n(G,A^G),\\
\endCD
\end{equation}
and the embedding of the quotient group
\begin{equation}\label{embedding in}
i_n\colon [{\mathcal P}_n(G,A)/{\mathcal P}_{n-1}(G,A)]
\hookrightarrow {\mathcal C}^n(G,A^G)\,.
\end{equation}
\end{lemma}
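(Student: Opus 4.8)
The plan is to read the claim off directly from the recursion $D^{n+1}=d_{n+1}D^{n}$ in \eqref{definition of iterated difference operators}, together with the explicit form of $d_{n+1}$ in \eqref{d homomorphisms}. Fix $a\in\mathcal P_n(G,A)$ and set $c:=D^{n}a\in\mathcal C^{n}(G,A)$. The hypothesis $a\in\mathcal P_n$ means $D^{n+1}a=0$, and since $D^{n+1}a=d_{n+1}(D^{n}a)=d_{n+1}c$, this is exactly the assertion $d_{n+1}c=0$.

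Next I would unwind the definition of $d_{n+1}$. By \eqref{d homomorphisms} — and by \eqref{[D1a](g1) via a and ag1} in the degenerate case $n=0$, where $\mathcal C^{0}(G,A)=A$, $D^{0}=\textbf{id}_A$ and $c=a$ — for all $g_1,\ldots,g_n,g\in G$ one has
\[
0=(d_{n+1}c)(g_1,\ldots,g_n,g)=\bigl[c(g_1,\ldots,g_n)\bigr]^{g}-c(g_1,\ldots,g_n),
\]
hence $\bigl[c(g_1,\ldots,g_n)\bigr]^{g}=c(g_1,\ldots,g_n)$. Since $g\in G$ is arbitrary, the element $c(g_1,\ldots,g_n)=[D^{n}a](g_1,\ldots,g_n)$ is $G$-invariant, i.e.\ lies in $A^{G}$; and since this holds for every $(g_1,\ldots,g_n)\in G^{n}$ we conclude $D^{n}a\in\mathcal C^{n}(G,A^{G})$, which is the inclusion $D^{n}(\mathcal P_n)\subseteq\mathcal C^{n}(G,A^{G})$. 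In particular $D^{n}_{\mathcal P_n}$ may be regarded as a homomorphism (a linear operator, in the vector-space case) into $\mathcal C^{n}(G,A^{G})$.

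For the remaining assertions I would observe that, by Definition \ref{Def: G-polynomials in A}, $\ker D^{n}=\mathcal P_{n-1}(G,A)$, so that
\[
\ker D^{n}_{\mathcal P_n}=\{a\in\mathcal P_n\mid D^{n}a=0\}=\mathcal P_n\cap\mathcal P_{n-1}=\mathcal P_{n-1}(G,A),
\]
the last equality being just the inclusion chain \eqref{Pn is contained in P{n+1}}. This gives exactness of \eqref{1st exact sequence} at every spot, and passing to the quotient by the kernel produces the injective homomorphism $i_n$ of \eqref{embedding in} induced by $D^{n}_{\mathcal P_n}$.

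I do not anticipate a genuine obstacle here: the statement is, in effect, a repackaging of the recursion \eqref{definition of iterated difference operators} and the formula \eqref{d homomorphisms}. The only points needing a little care are the index bookkeeping — it is precisely the \emph{last} argument of $D^{n+1}a$ on which the $G$-action acts, so the vanishing of $D^{n+1}a$ forces full $G$-invariance of $D^{n}a$ rather than some weaker symmetry property — and isolating the base case $n=0$, where the argument degenerates to the equivalence $a\in\mathcal P_0\iff a^{g}=a$ for all $g\in G\iff a\in A^{G}$.
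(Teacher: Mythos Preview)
Your argument is correct and essentially identical to the paper's own proof: both unwind $0=D^{n+1}a=d_{n+1}(D^{n}a)$ via \eqref{d homomorphisms} to conclude that every value $[D^{n}a](g_1,\ldots,g_n)$ is $G$-invariant, and then read off the kernel statement directly from Definition \ref{Def: G-polynomials in A} and the inclusion chain \eqref{Pn is contained in P{n+1}}. Your explicit treatment of the base case $n=0$ is harmless extra care; the paper handles all $n\in\mathbb Z_+$ uniformly.
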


\begin{proof}
Let $n\in\mathbb Z_+$. Take any $a\in {\mathcal P}_n(G,A)$.
By Definition \ref{Def: G-polynomials in A}, $D^{n+1}a=0$.
According to (\ref{definition of iterated difference operators}),
we have
\begin{equation*}\label{equation D(n+1)a=0}
0=D^{n+1}a =d_{n+1}(D^n a)\,.
\end{equation*}
In other words, we get
$$
\aligned
0&=[d_{n+1}(D^n a)](g_1,...,g_{n+1})
=[(D^n a)^{g_{n+1}}-D^n a](g_1,...,g_n)\\
&=\{[D^n a](g_1,...,g_n)\}^{g_{n+1}}-[D^n a](g_1,...,g_n) \ \
\text{for all} \ \, g_1,...,g_{n+1}\in G\,.
\endaligned
$$
The latter relation may be written as
\begin{equation*}\label{equation [D(n+1)a](g1,...,gn+1)=0}
\{[D^n a](g_1,...,g_n)\}^g=[D^n a](g_1,...,g_n)
\ \ \text{for all} \ \, g,\,g_1,...,g_n\in G\,,
\end{equation*}
which actually means that all the values $[D^n a](g_1,...,g_n)$
of the $n$-cochain $D^n a$ are $G$-invariant. 
\medskip

\noindent Hence, for every $a\in{\mathcal P}_n$ we may regard
the $n$-cochain $D^n_{{\mathcal P}_n}(a)=D^n a$
(which is originally an $A$-valued one) as
a $n$-cochain of $G$ {\em with values in} $A^G$: \
$D^n_{{\mathcal P}_n}(a)=D^n a\in{\mathcal C}^n(G,A^G)$.
So $D^n_{{\mathcal P}_n}({\mathcal P}_n)\subseteq{\mathcal C}^n(G,A^G)$
and $D^n_{{\mathcal P}_n}$ may be regarded as a homomorphism (linear operator)
acting from ${\mathcal P}_n(G,A)$ to ${\mathcal C}^n(G,A^G)$.
\medskip

\noindent The rest statements of Lemma \ref{Lm: mapping of polynomials to invariant cochains} follow immediately from
what we have proved above.
\end{proof}
\medskip
\subsection{Coboundary operators and iterated difference operators}
\label{subsect: Coboundary operators and iterated difference operators}
\noindent In this subsection, we want to study more carefully the image
$D^n_{{\mathcal P}_n}({\mathcal P}_n)\subseteq{\mathcal C}^n(G,A^G)$ of the homomorphism
$D^n_{{\mathcal P}_n}$. We will see that this image is contained
in the group ${\mathcal L}_n(G,A^G)$ of all $A^G$-valued $n$-morphism
 $G$ (according to Lemma \ref{Lm: spaces of polylinear
forms are of finite dim}$(a)$,
${\mathcal L}_n(G,A^G)\subseteq{\mathcal C}^n(G,A^G)$). To prove this,
we will
introduce the {\em coboundary operators}
$\delta^n\colon {\mathcal C}^n(G,A)\to{\mathcal C}^{n+1}(G,A)$
(related to the {\em trivial} left $G$-action in $A$)
and study their relation to the iterated difference operators $D^n$.

\begin{definition}
Let us consider $A$ as a trivial left $G$-module and
define the {\em coboundary operator}
$$
\delta^n\colon\, {\mathcal C}^n(G,A)\to{\mathcal C}^{n+1}(G,A)
$$
by the standard formula
\begin{equation*}\label{coboundary operator}
\aligned
\left[\delta^n c\right](g_1,\ldots,&g_{n+1}) = c(g_2,\ldots,g_{n+1})
-c(g_{1}g_2,g_3,\ldots,g_{n+1})
+ c(g_1,g_{2}g_3,\ldots,g_{n+1})\\
&+\ldots + (-1)^n c(g_1, \ldots ,g_{n-1},g_n g_{n+1})
+(-1)^{n+1}c(g_1, \ldots ,g_n)\,.
\endaligned
\end{equation*}
Notice that $\delta^0=0$ since we use the trivial left $G$-action in $A$;
indeed, $\mathcal C^0(G,A)=A$ and for any $a\in A$ and $g\in G$
we have $(\delta^0 a)(g)={\,}^g a-a=a-a=0$.
\end{definition}

%
%

\begin{lemma}\label{Lm: polymorphism is a cocycle}
Any $n$-morphism is a $n$-cocycle.
\end{lemma}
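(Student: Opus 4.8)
The goal is to show that any $L\in\mathcal L_n(G,B)$, viewed as an $n$-cochain via Lemma \ref{Lm: spaces of polylinear forms are of finite dim}$(a)$, satisfies $\delta^n L=0$. The plan is to plug $L$ into the defining formula for $\delta^n$ and expand each term $L(\ldots,g_ig_{i+1},\ldots)$ using the additivity property (\ref{polymorphism}) in the $i$-th slot. Once every such term is split into two, the whole alternating sum should collapse by cancellation of consecutive pairs — a telescoping phenomenon that is the cochain-level shadow of the fact that polymorphisms are, up to sign and relabeling, the "linear" part of the bar resolution.

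\begin{proof}
Let $L\in\mathcal L_n(G,B)$. By Lemma \ref{Lm: spaces of polylinear forms are of finite dim}$(a)$, $L\in\mathcal C^n(G,B)$, so $\delta^n L$ is defined. Fix $g_1,\ldots,g_{n+1}\in G$. For each $i\in\{1,\ldots,n\}$, the $i$-th interior term of $[\delta^n L](g_1,\ldots,g_{n+1})$ is
$$
(-1)^i\,L(g_1,\ldots,g_{i-1},g_ig_{i+1},g_{i+2},\ldots,g_{n+1}),
$$
and by (\ref{polymorphism}) applied in the $i$-th argument this equals
$$
(-1)^i\,L(g_1,\ldots,g_{i-1},g_i,g_{i+2},\ldots,g_{n+1})
+(-1)^i\,L(g_1,\ldots,g_{i-1},g_{i+1},g_{i+2},\ldots,g_{n+1}).
$$
Denote by $M_i := L(g_1,\ldots,\widehat{g_i},\ldots,g_{n+1})$ the value of $L$ on the $n$-tuple obtained from $(g_1,\ldots,g_{n+1})$ by deleting the $i$-th entry, for $i=1,\ldots,n+1$. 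Then the first summand above is $(-1)^i M_{i+1}$ and the second is $(-1)^i M_i$. Adding in the two boundary terms of $\delta^n$, namely $M_1$ (the term $c(g_2,\ldots,g_{n+1})$, with sign $(-1)^0=+1$) and $(-1)^{n+1}M_{n+1}$, we obtain
$$
[\delta^n L](g_1,\ldots,g_{n+1})
= M_1 + \sum_{i=1}^{n}(-1)^i\bigl(M_i+M_{i+1}\bigr) + (-1)^{n+1}M_{n+1}.
$$
The sum $\sum_{i=1}^n(-1)^i(M_i+M_{i+1})$ telescopes: the coefficient of $M_1$ in it is $(-1)^1=-1$, the coefficient of $M_{n+1}$ is $(-1)^n$, and for $1<j\le n$ the coefficient of $M_j$ is $(-1)^{j-1}+(-1)^j=0$. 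Hence the whole expression collapses to
$$
[\delta^n L](g_1,\ldots,g_{n+1}) = M_1 + (-M_1) + (-1)^n M_{n+1} + (-1)^{n+1}M_{n+1} = 0.
$$
Since $g_1,\ldots,g_{n+1}\in G$ were arbitrary, $\delta^n L=0$, i.e. $L$ is an $n$-cocycle.
\end{proof}

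The only mild subtlety is bookkeeping of signs and indices when matching the split terms $M_i,M_{i+1}$ against the boundary terms and verifying that the interior coefficients indeed vanish in pairs; this is the part to write out carefully, but it is entirely mechanical and, as shown above, reduces to a one-line telescoping identity.
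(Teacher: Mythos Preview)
Your proof is correct and follows exactly the approach the paper indicates: the paper merely asserts that ``a straightforward calculation shows that $\delta^n c(g_1,\ldots,g_{n+1})=0$,'' and you have carried out precisely that calculation, expanding each interior term via the additivity condition~(\ref{polymorphism}) and checking the telescoping cancellation against the two boundary terms. Your bookkeeping of the $M_j$'s and their signs is accurate.
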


\pf
According to Lemma \ref{Lm: spaces of polylinear
forms are of finite dim}$(a)$, any
$n$-morphism $c$ is a $n$-cochain. A straightforward calculation
shows that $\delta^n c(g_1,...,g_{n+1})=0$ for all $g_1,...,g_{n+1}$. \ep

In the following two lemmas, we establish some simple algebraic properties
of the operators $\delta$, $d$, and $D$. We omit a simple computation that proves the first lemma.

\begin{lemma}\label{Lm: relation between delta{n} and delta{n-1}}
Let $n\in\mathbb N$ and $c\in {\mathcal C^n}(G,A)$.
For each $h\in G$, we define the $(n-1)$-cochain $c_h$ by
$$
c_h(h_1,...,h_{n-1}):=c(h,h_1,...,h_{n-1})\,.
$$
Then for every $h,g_1,...,g_n\in G$, we have
$$
\aligned
\left[\delta^n c\right] (h,g_1,...,g_n)
&=c(g_1,g_2,\ldots,g_n)-c(hg_1,g_2,\ldots,g_n)\\
&\hskip90pt+ c(h,g_{2},\ldots,g_n)-[\delta^{n-1} c_h](g_1,\ldots,g_n)\,.
\endaligned
$$
\end{lemma}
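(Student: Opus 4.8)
The plan is to prove this identity by a direct expansion of both sides from the definition of the coboundary operator, followed by a term-by-term comparison. The content is purely formal index bookkeeping, with one genuine subtlety: an index shift that is responsible for the minus sign in front of $[\delta^{n-1}c_h]$.

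First I would substitute the $(n+1)$-tuple $(h,g_1,\dots,g_n)$ into the defining formula for $\delta^n c$. This yields an alternating sum of $n+2$ terms, indexed by $i=0,1,\dots,n+1$, with the term of index $i$ carrying the sign $(-1)^i$. The term $i=0$ drops the first slot and equals $c(g_1,\dots,g_n)$; the term $i=1$ merges the first two slots $h,g_1$ and equals $-c(hg_1,g_2,\dots,g_n)$; for $2\le i\le n$ the term merges the slots $i$ and $i+1$, which now both lie inside the block $g_1,\dots,g_n$, so it equals $(-1)^i c(h,g_1,\dots,g_{i-1}g_i,\dots,g_n)$; and the term $i=n+1$ drops the last slot $g_n$ and equals $(-1)^{n+1}c(h,g_1,\dots,g_{n-1})$. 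The first two of these are precisely the first two summands on the right-hand side of the asserted identity, so everything reduces to showing that the ``tail'' $\sum_{i=2}^{n+1}(\cdots)$ equals $c(h,g_2,\dots,g_n)-[\delta^{n-1}c_h](g_1,\dots,g_n)$.

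For that step I would expand $[\delta^{n-1}c_h](g_1,\dots,g_n)$ from the definition --- an alternating sum of $n+1$ terms indexed by $j=0,\dots,n$ --- and rewrite each value $c_h(\cdots)$ as $c(h,\cdots)$. Comparing the two expansions, the summand of index $i$ in the tail of $[\delta^n c](h,\cdot)$ equals, after the reindexing $j=i-1$, exactly $(-1)$ times the summand of index $j$ of $[\delta^{n-1}c_h]$, because $(-1)^i=-(-1)^{i-1}$; the unique summand of $[\delta^{n-1}c_h]$ with no counterpart in the tail is the one of index $j=0$, namely $c_h(g_2,\dots,g_n)=c(h,g_2,\dots,g_n)$. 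Hence the tail equals $c(h,g_2,\dots,g_n)-[\delta^{n-1}c_h](g_1,\dots,g_n)$, and restoring the two terms separated at the start gives the claimed formula. I would also record the cases $n=1$ (where $\delta^0=0$ and the identity reduces to the definition of $\delta^1$) and $n=2$ as sanity checks that make the matching transparent.

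The computation is routine, so the ``main obstacle'' is nothing more than disciplined bookkeeping: after the substitution $(g_1,\dots,g_{n+1})\mapsto(h,g_1,\dots,g_n)$ one must track precisely which pair of arguments gets multiplied in each summand, and one must notice that the uniform sign flip produced by the index shift $j=i-1$ is exactly what converts $+[\delta^{n-1}c_h]$ into $-[\delta^{n-1}c_h]$. Alternatively, the identity also follows by induction on $n$ from the recursive structure of the coboundary operator $\delta$, but the direct expansion above is the shortest route.
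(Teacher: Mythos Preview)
Your proposal is correct and is exactly the ``simple computation'' the paper has in mind: the paper explicitly omits the proof of this lemma, remarking only that it follows by a straightforward calculation. Your direct expansion of $[\delta^n c](h,g_1,\dots,g_n)$, separation of the $i=0,1$ terms, and reindexing $j=i-1$ of the remaining tail to identify it with $c(h,g_2,\dots,g_n)-[\delta^{n-1}c_h](g_1,\dots,g_n)$ is precisely that calculation, carried out cleanly.
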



\begin{lemma}\label{Lm: relation between d and delta}
For any $n\ge 1$ we have:
\begin{eqnarray}
\delta^n d_{n-1} &=& d_n \delta^{n-1} + (-1)^n d_n d_{n-1}\,,
\label{relation between d, D, and delta}\\
\delta^n D^n \ \ &=& d_n \delta^{n-1} D^{n-1} + (-1)^n D^{n+1}\,,
\label{relation between D and delta}\\
\delta^n D^n  \ \
&=& \left\{
   \aligned
\ & -D^{n+1} \ \ \text{for} \ n \ \text{odd}\,,\\
  &\ \ \ \ \, 0   \ \qquad \text{for} \ n \ \text{even}\,.
   \endaligned
  \right .\qquad \label{relation between d and D}
\end{eqnarray}
\end{lemma}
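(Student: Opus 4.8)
The plan is to prove the three identities in order, deriving each from the previous one by a short formal manipulation. The first identity \eqref{relation between d, D, and delta} is an identity between operators $\mathcal C^{n-1}(G,A)\to\mathcal C^{n+1}(G,A)$, and I would prove it by a direct evaluation on an arbitrary cochain $c\in\mathcal C^{n-1}(G,A)$ and an arbitrary tuple $(g_1,\ldots,g_{n+1})\in G^{n+1}$. Expanding $\delta^n(d_{n-1}c)$ via the definition of $\delta^n$ and the definition \eqref{d homomorphisms} of $d_{n-1}$ produces a sum of terms of the form $c(\cdots)^{g_\bullet}$ and $c(\cdots)$; on the other side, $d_n(\delta^{n-1}c)$ and $d_nd_{n-1}c$ expand similarly. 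Both sides are alternating sums over the same collection of "merge-and-delete" operations on $(g_1,\ldots,g_{n+1})$, so the identity reduces to a bookkeeping check of signs; the correction term $(-1)^n d_nd_{n-1}$ is exactly what accounts for the mismatch caused by $d_{n-1}$ acting via the $G$-module structure on the \emph{last} argument while $\delta^{n}$ is built from the \emph{trivial} left action. This is the computational heart of the lemma, and it is the step I expect to be the main obstacle: one must organize the two expansions so that the cancellations are transparent rather than a brute-force term count. A clean way to do this is to feed $d_{n-1}c$ into the formula of Lemma \ref{Lm: relation between delta{n} and delta{n-1}} (applied with $n$ in place of $n$, $c$ replaced by $d_{n-1}c$), which already isolates the "first-slot" terms and leaves a $\delta^{n-1}$ of the shifted cochain; combined with the recursion $d_n d_{n-1} = d_n d_{n-1}$ this collapses to \eqref{relation between d, D, and delta}.

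Next I would deduce \eqref{relation between D and delta}. Apply \eqref{relation between d, D, and delta} to $c=D^{n-1}a$ for $a\in A$. Since $D^n = d_n D^{n-1}$ and $D^{n+1}=d_{n+1}D^n = d_{n+1}d_n D^{n-1}$, i.e.\ $d_n d_{n-1}D^{n-2}=D^n$ after the appropriate index shift, the term $(-1)^n d_n d_{n-1}$ applied to $D^{n-1}$ becomes $(-1)^n d_n d_{n-1} D^{n-1} = (-1)^n D^{n+1}$ by \eqref{definition of iterated difference operators}, while $\delta^n d_{n-1}$ applied to $D^{n-1}$ is $\delta^n D^n$ and $d_n\delta^{n-1}$ applied to $D^{n-1}$ is $d_n\delta^{n-1}D^{n-1}$. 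This yields \eqref{relation between D and delta} immediately.

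Finally, for \eqref{relation between d and D} I would argue by induction on $n$, using \eqref{relation between D and delta} as the inductive step. The base case $n=1$: we have $\delta^1 D^1 a (g_1,g_2) = (D^1a)(g_2) - (D^1a)(g_1g_2) + (D^1a)(g_1)$, and expanding each term via $(D^1a)(g)=a^g-a$ gives $(a^{g_2}-a)-(a^{g_1g_2}-a)+(a^{g_1}-a) = a^{g_1}+a^{g_2}-a^{g_1g_2}-a = -(D^2a)(g_1,g_2)$ by the $n=2$ case of \eqref{formula for Dna}; so $\delta^1 D^1 = -D^2$, matching the "$n$ odd" case. For the inductive step, assume the claim for $n-1$. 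If $n$ is even then $n-1$ is odd, so $\delta^{n-1}D^{n-1} = -D^n$ by hypothesis; plugging into \eqref{relation between D and delta}, $\delta^n D^n = d_n(-D^n) + (-1)^n D^{n+1} = -d_n D^n + D^{n+1} = -D^{n+1}+D^{n+1}=0$, using $d_nD^n = D^{n+1}$ and $(-1)^n=1$. If $n$ is odd then $n-1$ is even, so $\delta^{n-1}D^{n-1}=0$ by hypothesis; then \eqref{relation between D and delta} gives $\delta^n D^n = d_n\cdot 0 + (-1)^n D^{n+1} = -D^{n+1}$, using $(-1)^n=-1$. This completes the induction and the proof.
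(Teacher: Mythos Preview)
Your plan is correct and essentially matches the paper's proof: direct expansion for \eqref{relation between d, D, and delta}, then composition with $D^{n-1}$ to get \eqref{relation between D and delta}, then induction for \eqref{relation between d and D} (the paper uses $\delta^0 D^0=0$ as its base case rather than your explicit $n=1$ check, but the inductive step is identical). One small caution: your aside about organizing the first computation via Lemma~\ref{Lm: relation between delta{n} and delta{n-1}} is not quite on target, since that lemma peels off the \emph{first} argument while the operators $d_m$ act through the \emph{last}; the paper simply carries out the brute expansion you describe first, inserting and removing the single term $(-1)^n c^{g_{n+1}}(g_1,\ldots,g_{n-1})$ to make the regrouping into $d_n\delta^{n-1}c$ and $(-1)^n d_n d_{n-1}c$ transparent.
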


\pf
Let $c\in \mathcal C^{n-1}(G,A)$ and
$g_1,...,g_{n+1}\in G$. Put $c':= d_{n-1}c$.
Then we have:
$$
\aligned
(&\delta^n d_{n-1}c)(g_1,...,g_{n+1}) = (\delta^nc')(g_1,...,g_{n+1})\\
&= c'(g_2,...,g_{n+1}) - c'(g_1g_2,g_3,...,g_{n+1})\\
&\hskip10pt+ c'(g_1,g_2g_3,g_4,...,g_{n+1})
+... +(-1)^{n-1}c'(g_1,...,g_{n-1}g_n,g_{n+1})\\
&\hskip10pt+ (-1)^nc'(g_1,...,g_{n-1},g_ng_{n+1})
+(-1)^{n+1}c'(g_1,...,g_{n-1},g_n)\\
&= (d_{n-1}c)(g_2,...,g_{n+1}) - (d_{n-1}c)(g_1g_2,g_3,...,g_{n+1})\\
&\hskip10pt+ (d_{n-1}c)(g_1,g_2g_3,g_4,...,g_{n+1})
+... +(-1)^{n-1}(d_{n-1}c)(g_1,...,g_{n-1}g_n,g_{n+1})\\
&\hskip10pt+ (-1)^n(d_{n-1}c)(g_1,...,g_{n-1},g_ng_{n+1})
+ (-1)^{n+1}(d_{n-1}c)(g_1,...,g_{n-1},g_n)\\
&= [c^{g_{n+1}}(g_2,...,g_n) - c(g_2,...,g_n)]
- [c^{g_{n+1}}(g_1g_2,g_3,...,g_n) - c(g_1g_2,g_3,...,g_n)]\\
&\hskip10pt + [c^{g_{n+1}}(g_1,g_2g_3,g_4,...,g_n)
                               - c(g_1,g_2g_3,g_4,...,g_n)]\\
&\hskip10pt+... + (-1)^{n-1}[c^{g_{n+1}}(g_1,...,g_{n-2},g_{n-1}g_n)
- c(g_1,...,g_{n-2},g_{n-1}g_n)]\\
&\hskip10pt +(-1)^n [c^{g_ng_{n+1}}(g_1,...,g_{n-1})- c(g_1,...,g_{n-1})]\\
&\hskip10pt+ (-1)^{n+1}[c^{g_n}(g_1,...,g_{n-1}) - c(g_1,...,g_{n-1})]
\endaligned
$$
Adding the zero sum
$
(-1)^n c^{g_{n+1}}(g_1,...,g_{n-1})-(-1)^n c^{g_{n+1}}(g_1,...,g_{n-1})
$
to the right hand side of the above equality and rearranging the obtained expression to see that
$$
\aligned
&(\delta^n d_{n-1}c)(g_1,...,g_{n+1})\\
&=[c(g_2,...,g_n) - c(g_1g_2,...,g_n) + c(g_1,g_2g_3,...,g_n)\\
&\hskip50pt  +... + (-1)^{n-1}c(g_1,...,g_{n-1}g_n)
                       + (-1)^nc(g_1,...,g_{n-1})]^{g_{n+1}}\\
&\hskip50pt - [c(g_2,...,g_n) - c(g_1g_2,...,g_n) + c(g_1,g_2g_3,...,g_n)\\
&\hskip50pt +... + (-1)^{n-1}c(g_1,...,g_{n-1}g_n)+(-1)^nc(g_1,...,g_{n-1})]\\
&\hskip50pt +(-1)^n[c^{g_ng_{n+1}}(g_1,...,g_{n-1})
- c^{g_{n+1}}(g_1,...,g_{n-1})\\
&\hskip50pt - c^{g_n}(g_1,...,g_{n-1})+ c(g_1,...,g_{n-1})]\\
&= [(\delta^{n-1}c)(g_1,...,g_n)]^{g_{n+1}} - (\delta^{n-1}c)(g_1,...,g_n)\\
&\hskip50pt +(-1)^n \{[(d_{n-1}c)(g_1,...,g_n)]^{g_{n+1}} - (d_{n-1}c)(g_1,...,g_n)\} \\
&= [d_n(\delta^{n-1}c)](g_1,...,g_{n+1})
+ (-1)^n [d_n (d_{n-1}c)](g_1,...,g_{n+1})\\
&= [(d_n\delta^{n-1} + (-1)^n d_nd_{n-1})c](g_1,...,g_{n+1})\,,
\endaligned
$$
which proves (\ref{relation between d, D, and delta}).
Furthermore, by (\ref{definition of iterated difference operators})
and (\ref{relation between d, D, and delta}), we have
$$
\aligned
\delta^n D^n &= \delta^n d_{n-1} d_{n-2}\cdots d_0
= (\delta^n d_{n-1})\, d_{n-2}\cdots d_0\\
&= (d_n\delta^{n-1} + (-1)^n d_n d_{n-1})\, d_{n-2}\cdots d_0\\
&= d_n\delta^{n-1} d_{n-2}\cdots d_0 + (-1)^n d_n d_{n-1}d_{n-2}\cdots d_0\\
&= d_n\delta^{n-1}D^{n-1} + (-1)^n D^{n+1}\,;
\endaligned
$$
this proves (\ref{relation between D and delta}).
Finally, to prove (\ref{relation between d and D}),
we use (\ref{relation between D and delta}) and induction in $n$.
Since $\delta^0 = 0$, we have $\delta^0 D^0 = 0$.
Assume that for some $m\in \mathbb N$, relation (\ref{relation between d and D})
is already proven for all $n\le m-1$. If $m$ is odd
then $m-1$ is even and
$$
\delta^mD^m = d_m\delta^{m-1}D^{m-1} + (-1)^m D^{m+1} = - D^{m+1}\,.
$$
If $m$ is even then $m-1$ is odd and
$$
\aligned
\delta^mD^m &= d_m\delta^{m-1}D^{m-1} + (-1)^m D^{m+1}\\
&= d_{m} (-D^m) + D^{m+1} = -D^{m+1} + D^{m+1} = 0.
\endaligned
$$
This competes the proof of the lemma.
\ep

\begin{corollary}\label{Crl: Dna is n-cocycle whenever D{n+1}a=0}
Let $n\in\mathbb Z_+$ and $a\in{\mathcal P}_n(G,A)$.
Then $D^n a$ is a $n$-cocycle of $G$ with values in $A^G$.
\end{corollary}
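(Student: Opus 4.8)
The plan is to deduce the corollary directly from the algebraic identities already in hand, with essentially no new computation. First I would recall that by Lemma~\ref{Lm: mapping of polynomials to invariant cochains}, for $a\in{\mathcal P}_n(G,A)$ the cochain $D^n a$ in fact takes values in the subgroup $A^G\subseteq A$, so $D^n a\in{\mathcal C}^n(G,A^G)$. Since the coboundary operator $\delta^n$ is defined via the \emph{trivial} left $G$-action, it restricts to a map ${\mathcal C}^n(G,A^G)\to{\mathcal C}^{n+1}(G,A^G)$; hence it makes sense to ask whether $D^n a$ is a cocycle in the complex $\bigl({\mathcal C}^\bullet(G,A^G),\delta^\bullet\bigr)$, and the only thing left to verify is that $\delta^n(D^n a)=0$.

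For that I would invoke formula~(\ref{relation between d and D}) of Lemma~\ref{Lm: relation between d and delta}, which states that $\delta^n D^n=-D^{n+1}$ when $n$ is odd and $\delta^n D^n=0$ when $n$ is even. In the even case (in particular for $n=0$, where moreover $\delta^0=0$) there is nothing to prove. In the odd case, apply both sides of the operator identity to $a$, obtaining $\delta^n(D^n a)=-D^{n+1}a$; but $a\in{\mathcal P}_n(G,A)=\ker D^{n+1}$ by Definition~\ref{Def: G-polynomials in A}, so $D^{n+1}a=0$, and therefore $\delta^n(D^n a)=0$. Thus $D^n a$ is a $n$-cocycle with values in $A^G$, as claimed.

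I do not expect any genuine obstacle: the statement is a corollary precisely because the substantive work was carried out in Lemmas~\ref{Lm: mapping of polynomials to invariant cochains} and~\ref{Lm: relation between d and delta}. The only point that needs a moment's care is the bookkeeping about which module structure is in play — one must use that $D^n a$ is $A^G$-valued (so that ``cocycle with values in $A^G$'' is even meaningful) together with the fact that $\delta^n$, built from the trivial action, restricts well to $A^G$-valued cochains; both observations are immediate from the definitions.
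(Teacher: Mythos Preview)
Your proposal is correct and follows essentially the same route as the paper's own proof: invoke Lemma~\ref{Lm: mapping of polynomials to invariant cochains} to get $D^n a\in{\mathcal C}^n(G,A^G)$, then use formula~(\ref{relation between d and D}) together with $D^{n+1}a=0$ to conclude $\delta^n(D^n a)=0$. The paper's argument is slightly terser, but the logic is identical.
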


\pf
By Lemma \ref{Lm: mapping of polynomials to invariant cochains},
$D^n a\in{\mathcal C}^n(G,A^G)$. Thus, we only need to check that
$\delta^n D^n a=0$. By (\ref{relation between d and D}),
$$
\delta^n D^n = \left\{
   \aligned
     \ & -D^{n+1} \ \ \text{for} \ n \ \text{odd}\,,\\
       &\ \ \ \ \, 0   \ \qquad \text{for} \ n \ \text{even}\,;
   \endaligned
  \right .
$$
this implies $\delta^n D^n a=0$ since
$D^{n+1}a=0$.
\ep
\subsection{Iterated difference operators and polymorphisms}
\label{subsect: Iterated difference operators and polymorphisms}
Here we establish that polynomial-like elements are closely related
to polymorphisms. This connection provides us
with an appropriate tool for the proving that
the spaces of polynomial-like elements are of finite dimension, i.e., the Main Theorem.
\smallskip

\begin{notation} Let ${\mathcal L}_n^{\mathcal S}(G,A^G)$ denote
the subgroup of the group ${\mathcal L}_n(G,A^G)$
consisting of all {\em symmetric} $A^G$-valued $n$-morphisms
of $G$. In more details, a $n$-morphism
$$L\colon\, \underset{n}{\underbrace{G\times\cdots\times G}}\to A^G$$
belongs to ${\mathcal L}_n^{\mathcal S}(G,A^G)$
whenewer $L(g_{j_1},...,g_{j_n})=L(g_1,...,g_n)$
for any $g_1,...,g_n\in G$ and any permutation $(j_1,...,j_n)$
of the indices $1,...,n$. \hfill $\bigcirc$
\end{notation}
\medskip

\noindent Suppose $a\in{\mathcal P}_0(G,A)$, that is, $a\in A$
and $[D^1 a](g)=a^g-a=0$ for all all $g\in G$. This shows that
the element $a$ is in $A^G$ and hence its image $D^0 a=a$ belongs to
$A^G={\mathcal L}_0(G,A^G)$
(see Definition \ref{Def: group polymorphism}).
\smallskip

\noindent Suppose now that $a\in{\mathcal P}_1(G,A)$.
Then, according to Corollary \ref{Crl: Dna is n-cocycle whenever D{n+1}a=0},
$D^1 a$ is a $1$-cocycle of $G$ with values in $A^G$. Hence
\begin{equation*}\label{D^1a is 1-cocycle}
\aligned
\ &D^1 a\colon G\to A^G \quad\text{and}\quad \delta^1(D^1 a)=0\,,
\quad\text{that is,}\\
&[D^1 a](g_2)-[D^1 a](g_1g_2)+[D^1 a](g_1)=0 \ \,
\text{for all} \ \, g_1,g_2\in G\,,
\endaligned
\end{equation*}
which shows that the mapping $D^1 a\colon G\to A^G$
{\sl is a group homomorphism whenever} $a\in{\mathcal P}_1(G,A)$.
\smallskip

\noindent Combining this simple observation
with Lemma \ref{Lm: D^n a is symmetric for a in Pn},
we come to the following property:

\begin{proposition}\label{Prp: DnPn(G,AG) is contained in LnS(G,AG)}
Let $n\in\mathbb Z_+$ and $a\in{\mathcal P}_n(G,A)$. Then
$D^n a\in{\mathcal L}_n^{\mathcal S}(G,A^G)$,
that is, $D^n a$ is a symmetric $A^G$-valued $n$-morphism of $G$.
\end{proposition}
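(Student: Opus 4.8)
The plan is to collect the three structural facts about $D^n a$ that are already available and then supply the single missing ingredient, polylinearity. For $a\in\mathcal P_n(G,A)$ we know by Lemma \ref{Lm: mapping of polynomials to invariant cochains} that $D^n a$ takes its values in $A^G$, by Lemma \ref{Lm: D^n a is symmetric for a in Pn} (applicable since $D^{n+1}a=0$) that $D^n a$ is symmetric in its arguments, and by Corollary \ref{Crl: Dna is n-cocycle whenever D{n+1}a=0} that $D^n a$ is an $n$-cocycle, i.e. $\delta^n D^n a=0$. Hence it remains only to prove that $c:=D^n a$ is an $n$-morphism, and because $c$ is symmetric it suffices to establish additivity in the first argument, namely $c(g'g'',g_2,\dots,g_n)=c(g',g_2,\dots,g_n)+c(g'',g_2,\dots,g_n)$. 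The case $n=0$ is trivial ($D^0 a=a\in A^G=\mathcal L_0(G,A^G)$), so I would assume $n\ge 1$.

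For the additivity I would apply Lemma \ref{Lm: relation between delta{n} and delta{n-1}} to $c=D^n a$, which rewrites $[\delta^n c](h,g_1,\dots,g_n)$ as $c(g_1,\dots,g_n)-c(hg_1,g_2,\dots,g_n)+c(h,g_2,\dots,g_n)-[\delta^{n-1}c_h](g_1,\dots,g_n)$, where $c_h(h_1,\dots,h_{n-1})=c(h,h_1,\dots,h_{n-1})$. Since $\delta^n c=0$, this yields $c(hg_1,g_2,\dots,g_n)=c(g_1,\dots,g_n)+c(h,g_2,\dots,g_n)-[\delta^{n-1}c_h](g_1,\dots,g_n)$, so everything reduces to showing that the $(n-1)$-cochain $c_h=(D^n a)_h$ is an $(n-1)$-cocycle, i.e. $\delta^{n-1}c_h=0$.

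To identify $c_h$, I would use Lemma \ref{Lm: [D{n+1}a](g1,...,g{n+1})= [Dnag1](g2,...,g{n+1}) - [Dna](g2,...,g{n+1})} with $n-1$ in place of $n$, which gives $[D^n a](h,g_2,\dots,g_n)=[D^{n-1}(a^{h})](g_2,\dots,g_n)-[D^{n-1}a](g_2,\dots,g_n)$; that is, $c_h=D^{n-1}(a^{h})-D^{n-1}(a)$ as cochains. Now apply $\delta^{n-1}$ and invoke relation (\ref{relation between d and D}). If $n-1$ is even, then $\delta^{n-1}D^{n-1}=0$ on all of $A$, so $\delta^{n-1}c_h=0$ at once. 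If $n-1$ is odd, then $\delta^{n-1}D^{n-1}=-D^n$, so $\delta^{n-1}c_h=-D^n(a^{h})+D^n(a)$; but applying Lemma \ref{Lm: [D{n+1}a](g1,...,g{n+1})= [Dnag1](g2,...,g{n+1}) - [Dna](g2,...,g{n+1})} once more, this time with index $n$ and using $D^{n+1}a=0$, forces $D^n(a^{h})=D^n a$, and again $\delta^{n-1}c_h=0$. In either parity $\delta^{n-1}c_h=0$, hence $D^n a$ is additive in the first slot; combined with the symmetry already noted, $D^n a$ is additive in every slot, so it is an $n$-morphism with values in $A^G$, i.e. $D^n a\in\mathcal L_n^{\mathcal S}(G,A^G)$.

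The main obstacle, and the only place care is needed, is exactly this vanishing $\delta^{n-1}(D^n a)_h=0$: one is tempted to deduce it from Corollary \ref{Crl: Dna is n-cocycle whenever D{n+1}a=0}, but $a$ need not lie in $\mathcal P_{n-1}$, so $D^{n-1}a$ itself need not be a cocycle. The point is that the potential obstruction $\delta^{n-1}D^{n-1}a$ is (up to sign) $D^n a$ in the odd case, and it is cancelled by the matching term coming from $D^{n-1}(a^{h})$ precisely because $D^n$ cannot distinguish $a$ from $a^{h}$ once $D^{n+1}a=0$.
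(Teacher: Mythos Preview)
Your proof is correct and takes a genuinely different route from the paper's. The paper proceeds by induction on $n$: it shows that $b_h:=a^h-a$ lies in $\mathcal P_{n-1}$ (since $D^n b_h=[D^{n+1}a](h,\cdot,\dots,\cdot)=0$), invokes the induction hypothesis to conclude that $c_h=D^{n-1}b_h$ is an $(n-1)$-\emph{morphism}, and then uses Lemma~\ref{Lm: polymorphism is a cocycle} to deduce that $c_h$ is an $(n-1)$-cocycle. From there the paper uses Lemma~\ref{Lm: relation between delta{n} and delta{n-1}} exactly as you do to get additivity in the first slot (and notes in Remark~\ref{Rmk: more simple proof of Proposition {Prp: DnPn(G,AG) is contained in LnS(G,AG)}} that symmetry then finishes the job).

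The novelty in your argument is that you bypass the induction entirely: instead of proving the stronger statement that $c_h$ is a polymorphism, you prove directly that $\delta^{n-1}c_h=0$ by writing $c_h=D^{n-1}(a^h)-D^{n-1}a$ and applying the parity identity~(\ref{relation between d and D}). The even case is immediate, and in the odd case the obstruction $D^n(a^h)-D^n a$ vanishes because it equals $[D^{n+1}a](h,\cdot,\dots,\cdot)=0$. This is a cleaner use of the machinery of Lemma~\ref{Lm: relation between d and delta}, which the paper develops but only exploits through Corollary~\ref{Crl: Dna is n-cocycle whenever D{n+1}a=0}. The paper's approach yields the extra information that each $c_h$ is itself a polymorphism; your approach trades that for a shorter, induction-free argument. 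One minor remark: for $n=1$ your invocation of~(\ref{relation between d and D}) at level $n-1=0$ is outside the stated range $n\ge 1$ of Lemma~\ref{Lm: relation between d and delta}, but this is harmless since $\delta^0=0$ identically.
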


\pf
The symmetry of $D^n a$ and the inclusion
$[D^n a](g_1,...,g_n)\in A^G$ for any $a\in{\mathcal P_n}(G,A)$
and all $g_1,...,g_n\in G$ have been established in
Lemmas \ref{Lm: D^n a is symmetric for a in Pn}
and \ref{Lm: mapping of polynomials to invariant cochains}.
Thus, we only need to prove that $D^n a$ is a polymorphism.
\smallskip

\noindent The proof is by induction in $n$. The cases $n=0,1$
have been already handled above. Suppose that $m\ge 2$ and
the statement is already proven for $n=m-1$. To complete the proof,
we need to show that $D^m a\in{\mathcal L}_m(G,A^G)$ whenever $D^{m+1}a=0$.
\smallskip

\noindent Define cochains $c\in{\mathcal C}^m(G,A^G)$ and
$c_h\in{\mathcal C}^{m-1}(G,A^G)$ ($h\in G$) by
\begin{equation}\label{c and ch}
\aligned
c(g_1,...,g_m) \,\ &=[D^m a](g_1,g_2,...,g_m)\quad\text{and}\\
c_h(g_2,...,g_m) &=c(h,g_2,...,g_m)=[D^m a](h,g_2,...,g_m)\,.
\endaligned
\end{equation}
We start with the following claim:
\medskip

\noindent{\smbfit Claim 1.} {\sl For any $h\in G$, the $(m-1)$-cochain
$c_h$,
$$
c_h\colon\,\underset{m-1}{\underbrace{G\times\cdots\times G}}
\ni (g_2,...,g_m)\mapsto c_h(g_2,...,g_m)=[D^m a](h,g_2,...,g_m)\in A^G\,,
$$
is an $A^G$-valued $(m-1)$-homomorphism.}
\medskip

\noindent To justify this claim, we apply
Lemma \ref{Lm: [D{n+1}a](g1,...,g{n+1})=
[Dnag1](g2,...,g{n+1}) - [Dna](g2,...,g{n+1})}
to the $(m+1)$-cochain $D^{m+1} a$. By taking into account that
$D^{m+1} a=0$, for any $h_1,...,h_m\in G$, we obtain
\begin{equation*}
\begin{split}
[D^m (a^h-a)](h_1,...,h_m)&=[D^m (a^h)](h_1,...,h_m) - [D^m a](h_1,...,h_m)\\
&=[D^{m+1} a](h,h_1,...,h_m)=0\,.
\end{split}
\end{equation*}
The latter relation means that the element $b_h\Def a^h-a$
is a polynomial-like element in $A$ of order at most $m-1$.
Hence, by the induction hypothesis,
$D^{m-1} b_h\in{\mathcal L}_{m-1}(G,A^G)$.
Using Lemma \ref{Lm: [D{n+1}a](g1,...,g{n+1})=
[Dnag1](g2,...,g{n+1}) - [Dna](g2,...,g{n+1})} again, we obtain
that for any $g_2,...,g_m\in G$
$$
\aligned
c_h(g_2,...,g_m)&=\lbrack D^m a\rbrack (h,g_2,...,g_m)
=[D^{m-1}(a^h)](g_2,...,g_m)-[D^{m-1} a](g_2,...,g_m)\\
&=[D^{m-1}(a^h-a)](g_2,...,g_m)
=[D^{m-1} b_h](g_2,...,g_m)\,.
\endaligned
$$
Thereby, $c_h=D^{m-1} b_h\in{\mathcal L}_{m-1}(G,A^G)$,
which proves Claim 1.

\noindent In view of Claim 1, the following statement is
an immediate consequence of Lemma \ref{Lm: polymorphism is a cocycle}:
\medskip

\noindent{\smbfit Claim 2.} {\sl For any $h\in G$,
the $(m-1)$-cochain $c_h\in{\mathcal L}_{m-1}(G,A^G)$ defined in
{\rm (\ref{c and ch})}
is a $(m-1)$-cocycle, that is, $[\delta^{m-1} c_h](g_1,...,g_m)=0$
for every $g_1,...,g_m\in G$.}
\medskip

\noindent{\smbfit Claim 3.} {\sl For any $g_2,...,g_m\in G$ the mapping
\begin{equation}\label{linearity in first argument}
F_{g_2,...,g_m}\colon\, G\ni g\mapsto
[D^m a](g,g_2,...,g_m)\in A^G
\end{equation}
is a group homomorphism.}
\medskip

\noindent To justify Claim 3, we apply
Lemma \ref{Lm: relation between delta{n} and delta{n-1}}
(with $n=m+1$) to the $m$-cochain $c=D^m a$ and $(m-1)$-cochain
$c_h$ defined in (\ref{c and ch}).
This gives the relation
\begin{equation}\label{main relation}
\aligned
\hskip-5pt\left[\delta^m D^m a\right]&(h,g_1,...,g_m)
=[\delta^m c] (h,g_1,...,g_m)\\
&=c(g_1,g_2,\ldots,g_m)-c(hg_1,g_2,\ldots,g_m)\\
&\hskip20pt+ c(h,g_2,\ldots,g_m)-[\delta^{m-1} c_h](g_1,\ldots,g_m)\\
&=[D^m a](g_1,g_2,\ldots,g_m)-[D^m a](hg_1,g_2,\ldots,g_m)\\
&\hskip20pt + [D^m a](h,g_2,\ldots,g_m)-[\delta^{m-1} c_h](g_1,\ldots,g_m)\,.
\endaligned
\end{equation}
Since $D^m a$ is a $m$-cocycle
(Corollary \ref{Crl: Dna is n-cocycle whenever D{n+1}a=0})
and $c_h$ is a $(m-1)$-cocycle (Claim 2),
relation (\ref{main relation}) takes the form
$$
[D^m a](hg_1,g_2,\ldots,g_m)=[D^m a](h,g_2,\ldots,g_m)
+[D^m a](g_1,g_2,\ldots,g_m)\,,
$$
which proves (\ref{linearity in first argument}).
\medskip

\noindent Combining Claim 1 with Claim 3, we complete the induction
step, which proves Proposition \ref{Prp: DnPn(G,AG) is contained in LnS(G,AG)}.
\ep

\begin{remark}\label{Rmk: more simple proof of Proposition
{Prp: DnPn(G,AG) is contained in LnS(G,AG)}}
In fact, the above proof of Proposition
\ref{Prp: DnPn(G,AG) is contained in LnS(G,AG)}
can be simplified:
whenever Claim 1 is proven, one may use the symmetry of
$D^n a$ (Lemma \ref{Lm: D^n a is symmetric for a in Pn}) to conclude
that for any $n\ge 2$, the mapping
$D^n a\colon\,\underset{n}{\underbrace{G\times\cdots\times G}}\to A^G$
is a group polymorphism. \hfill $\bigcirc$
\end{remark}

\begin{Main}\label{Thm: Main Theorem}
$(a)$ $D^n({\mathcal P}_n(G,A))\subseteq {\mathcal L}_n^{\mathcal S}(G,A^G)$
for every $n\in{\mathbb N}$. That is,
the image $D^n({\mathcal P}_n(G,A))$ of the subgroup
${\mathcal P}_n(G,A)\subseteq A$ of all polynomial-like elements of order
at most $n$ under the homomorphism
$D^n\colon {\mathcal P}_n(G,A)\to {\mathcal C}^n(G,A)$ is contained
in the subgroup ${\mathcal L}_n^{\mathcal S}(G,A^G)\subseteq {\mathcal C}^n(G,A)$
consisting of all symmetric $A^G$-valued $n$-morphisms of the group $G$.
\smallskip

\noindent $(b)$ Exact sequence {\rm (\ref{1st exact sequence})}
and embedding {\rm (\ref{embedding in})} may be written in the form
\begin{equation}\label{2nd exact sequence}
\CD
0@>>>{\mathcal P}_{n-1}(G,A)@>>>{\mathcal P}_n(G,A)@>{D^n_{{\mathcal P}_n}}>>
{\mathcal L}_n^{\mathcal S}(G,A^G)\\
\endCD
\end{equation}
and
\begin{equation*}\label{embedding in, 2nd form}
i_n\colon [{\mathcal P}_n(G,A)/{\mathcal P}_{n-1}(G,A)]
\hookrightarrow {\mathcal L}_n^{\mathcal S}(G,A^G)\,.
\end{equation*}
\smallskip

\noindent $(c)$ Suppose that $F$ is a field
of characteristic $0$,
$G$ is a group, and $A$ is a $F$-vector space endowed with
a linear right $G$-action.
If the group $\widetilde G=G/[G,G]$ is finitely generated
and $\dim_F A^G<\infty$ then
\begin{equation*}\label{again finiteness dimP}
\dim_F {\mathcal P}_n(G,A)<\infty\quad \text{for every} \ \, n\in\mathbb Z_+\,.
\end{equation*}
\end{Main}

\pf
$(a)$ is proven in Proposition \ref{Prp: DnPn(G,AG) is contained
in LnS(G,AG)}.
$(b)$ follows from $(a)$ and
Lemma \ref{Lm: mapping of polynomials to invariant cochains}.
To proof $(c)$, we use induction in $n$, statement $(b)$,
and Lemma \ref{Lm: spaces of polylinear forms are of finite dim}$(d)$.
Indeed, by assumption, the space ${\mathcal P}_0(G,A)=A^G$
is of finite dimension. Suppose that
we already have that $\dim_F {\mathcal P}_{n-1}(G,A)<\infty$.
By Lemma \ref{Lm: spaces of polylinear forms are of finite dim}$(d)$,
we also have that
$\dim_F {\mathcal L}_n^{\mathcal S}(G,A^G)
\le \dim_F {\mathcal L}_n(G,A^G)<\infty$.
Therefore, the induction hypothesis and the
exact sequence (\ref{2nd exact sequence}) would imply that
$\dim_F {\mathcal P}_n(G,A)<\infty$.
\ep

%
\noindent We immediately obtain the following estimate:
\begin{proposition}
\label{estimate-dim-Pn}
Under the same assumption of part ($c$) of Main Theorem, the following estimate holds:
$$\dim_F \mathcal{P}_n(G,A) \leq s \cdot \binom{n+r}{r},$$
where $s=\dim_F A^G$ and $r$ is the rank of the abelian group $\widetilde{G}$.
\end{proposition}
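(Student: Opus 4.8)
The plan is to carry out the dimension count graded piece by graded piece, using the exact sequence (\ref{2nd exact sequence}) from the Main Theorem together with an explicit description of the space of symmetric polylinear forms. First I would observe that, by (\ref{2nd exact sequence}), the quotient $\mathcal{P}_n(G,A)/\mathcal{P}_{n-1}(G,A)$ embeds into $\mathcal{L}_n^{\mathcal{S}}(G,A^G)$, whence
$$\dim_F \mathcal{P}_n(G,A) \le \dim_F \mathcal{P}_{n-1}(G,A) + \dim_F \mathcal{L}_n^{\mathcal{S}}(G,A^G).$$
Since $\mathcal{P}_0(G,A)=A^G=\mathcal{L}_0^{\mathcal{S}}(G,A^G)$, an easy induction on $n$ gives
$$\dim_F \mathcal{P}_n(G,A) \le \sum_{k=0}^{n} \dim_F \mathcal{L}_k^{\mathcal{S}}(G,A^G),$$
so everything reduces to computing $\dim_F \mathcal{L}_k^{\mathcal{S}}(G,A^G)$ for $0\le k\le n$.

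Second, I would compute this dimension exactly. As in the proof of Lemma \ref{Lm: spaces of polylinear forms are of finite dim}(d), the isomorphism $\mu$ of Lemma \ref{Lm: spaces of polylinear forms are of finite dim}(c) (which plainly preserves symmetry) together with the vanishing of an $n$-morphism on torsion elements of $\widetilde G$ identifies $\mathcal{L}_k^{\mathcal{S}}(G,A^G)$ with $\mathcal{L}_k^{\mathcal{S}}(\mathbb Z^r,A^G)$, where $r$ is the rank of $\widetilde G$. By Lemma \ref{Lm: polymorphisms of Zr}(b), a $k$-morphism of $\mathbb Z^r$ is uniquely and freely determined by its array of values $b_{i_1,\dots,i_k}=L(\mathbf h_{i_1},\dots,\mathbf h_{i_k})$ on the standard basis $\mathbf h_1,\dots,\mathbf h_r$; applying the uniqueness clause of that lemma to permuted inputs shows that $L$ is symmetric precisely when this array is invariant under all permutations of its indices. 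Hence a symmetric $k$-morphism is determined by its values on the non-decreasing tuples $1\le i_1\le\dots\le i_k\le r$ — i.e., on the size-$k$ multisets from $\{1,\dots,r\}$, of which there are $\binom{k+r-1}{r-1}$ — and each of these values may be prescribed arbitrarily in $A^G$. Therefore $\dim_F \mathcal{L}_k^{\mathcal{S}}(G,A^G)=s\binom{k+r-1}{r-1}$, where $s=\dim_F A^G$.

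Finally, summing and invoking the hockey-stick identity $\sum_{k=0}^{n}\binom{k+r-1}{r-1}=\binom{n+r}{r}$ yields
$$\dim_F \mathcal{P}_n(G,A)\le s\sum_{k=0}^{n}\binom{k+r-1}{r-1}=s\binom{n+r}{r},$$
which is the claimed estimate. Since the Main Theorem and its supporting lemmas are already in hand, there is no serious obstacle here; the only point requiring a little care is the combinatorial parametrization of symmetric multilinear forms on $\mathbb Z^r$ by multisets of basis vectors — that is, that the "free" array of Lemma \ref{Lm: polymorphisms of Zr}(b) can be taken symmetric with no further constraint — which is immediate from the uniqueness statement there, and then the bookkeeping with the Pascal identity.
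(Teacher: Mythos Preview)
Your proof is correct and follows essentially the same route as the paper: both use the exact sequence (\ref{2nd exact sequence}) to bound $\dim_F\mathcal P_n$ in terms of $\dim_F\mathcal P_{n-1}$ and $\dim_F\mathcal L_n^{\mathcal S}(G,A^G)$, then compute $\dim_F\mathcal L_k^{\mathcal S}(G,A^G)=s\binom{k+r-1}{r-1}$ by counting symmetric arrays on basis vectors (the paper does this via the explicit basis $\{\ell_I^i\}$ and the equivalence classes $\gamma(I)$, you via Lemma~\ref{Lm: polymorphisms of Zr}(b) and multisets). The only cosmetic difference is that the paper phrases the final step as an induction using Pascal's identity, while you unfold the recursion and apply the hockey-stick identity directly; these are of course equivalent.
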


\pf
We prove it by induction in $n$.
The estimate is obviously true for $n=0$. So we assume that $n \geq 1$ and the estimate holds for $n-1$, i.e.,
\begin{equation}
\label{induction-estimate}
\dim_F \mathcal{P}_{n-1}(G,A) \leq s\cdot\binom{n+r-1}{r}.
\end{equation}
According to the proof of Main Theorem, we have
\begin{equation}
\label{estimate-mainthm}
\begin{split}
\dim_F \mathcal{P}_n(G,A)&=\dim_F \mathcal{P}_n(G,A)/ \mathcal{P}_{n-1}(G,A)+\dim_F \mathcal{P}_{n-1}(G,A)\\
&\leq \dim_F \mathcal{L}^{\mathcal{S}}_n(G,A^G)+\dim_F \mathcal{P}_{n-1}(G,A).
\end{split}
\end{equation}
Using Lemma \ref{Lm: spaces of polylinear forms are of finite dim}$(d)$,  we have
$$\dim_F \mathcal{L}^{\mathcal{S}}_n(G,A^G)=\dim_F \mathcal{L}^{\mathcal{S}}_n(\mathbb{Z}^r,A^G).$$
Moreover, we recall from the proof of Lemma \ref{Lm: spaces of polylinear forms are of finite dim}$(d)$ that
the polymorphisms $\{\ell^i_I\}_{1 \leq i \leq s, I \in \mathcal{I}}$ constitute a vector basis for $\mathcal{L}_n(\mathbb{Z}^r,A^G)$, where $\mathcal{I}$ is the family of all finite ordered sets $I$ consisting of $n$ natural numbers in $\{1, \ldots, r\}$.
For any $I=(i_1, \ldots, i_n) \in \mathcal{I}$, we define $\gamma(I):=(\gamma_1(I), \ldots, \gamma_r(I))$, where for each $1 \leq k \leq r$, the notation
$\gamma_k(I)$ is the number of indices $j \in \{1, \ldots, n\}$ such that $i_j=k$. If $I,J \in \mathcal{I}$ and $\gamma(I)=\gamma(J)$, we say that $I$ and $J$ belong to the same class.
Consider any $L \in \mathcal{L}^{\mathcal{S}}_n(\mathbb{Z}^r,A^G)$ and write it as follows
$$L=\sum_{i=1}^s\sum_{I\in\mathcal{I}}a^i_I \ell^i_I.$$
Due to the symmetry of $L$, whenever $I,J$ are in the same class, we must have $a^i_I=a^i_J$ for any $i \in \{1, \ldots, s\}$.
Thus, to determine $L$ completely, one just needs to know the coefficients $a^i_I$ for only one representative $I$ in each class. By definition, the cardinality of these classes is equal to the number of the $r$-tuples consisting of $r$ nonnegative intergers $(\gamma_1, \ldots, \gamma_r)$ satisfying $\gamma_1+\ldots \gamma_r=n$.
Therefore, we have
\begin{equation}
\label{dim-symmetric-morphism}
\dim_F \mathcal{L}^{\mathcal{S}}_n(G,A^G)=(\dim_F A^G)\cdot \binom{n+r-1}{r-1},
\end{equation}
for any $n \in \mathbb{Z}_+$. Using (\ref{induction-estimate}), (\ref{estimate-mainthm}), (\ref{dim-symmetric-morphism}),
we finish the proof.
\ep
\section{Polynomial-like elements in a ring}
\label{sect: Polynomial-like elements in a ring}

Suppose that a group $G$ acts from the right hand side in a ring $A$
in such a way that for any $g\in G$ the mapping $A\ni a\mapsto a^g\in A$
is a ring automorphism. Under this assumption, we prove now some additional properties of the iterated difference operators and the set $\mathcal{P}(G,A)$ of all polynomial-like elements in $A$.

\begin{proposition}\label{Prp: P(G,A) is a ring}
$(a)$ {\smbfit Leibniz-type formula:} For any $a,b\in A$ and $g\in G$,
\begin{equation}\label{Leibniz-type formula}
\ [D^1(ab)](g)=a^g\cdot \{[D^1 b](g)\} +\{[D^1 a](g)\}\cdot b\,.
\end{equation}
$(b)$ Let $m,n\in{\mathbb Z_+}$ and $m+n\ge 1$. Then
\begin{equation}\label{D{m+n-1}(ab)=0 whenever Dma=Dnb=0}
D^{m+n-1}(ab)=0\quad\text{whenever} \ \ a,b\in A \ \
\text{and} \ \, D^m a=D^n b=0\,,
\end{equation}
Equivalently, $ab\in{\mathcal P}_{m+n-1}(G,A)$ whenever
$a\in{\mathcal P}_m(G,A)$ and $b\in{\mathcal P}_n(G,A)$.
In particular, the set ${\mathcal P}(G,A)$ of all polynomial-like elements
in $A$ is a subring of the ring $A$.
\vskip0.3cm

\noindent $(c)$ Each operator $D^n\colon A\to {\mathcal C}^n(G,A)$, $n\in\mathbb N$,
is $A^G$-linear, i.e., whenever $a\in A$ is a $G$-invariant element, we have that $D^n(ab)=aD^n b$ for all $b\in A$.
\end{proposition}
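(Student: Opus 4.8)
The plan is to handle the three assertions in order; $(a)$ and $(c)$ are direct computations, while $(b)$ carries the real content and is proved by induction on the sum of the two orders. For $(a)$, since each map $a\mapsto a^g$ is a ring automorphism we have $(ab)^g=a^gb^g$, whence
$$
[D^1(ab)](g)=a^gb^g-ab=a^g(b^g-b)+(a^g-a)b=a^g\{[D^1 b](g)\}+\{[D^1 a](g)\}b,
$$
which is exactly (\ref{Leibniz-type formula}).

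For $(b)$, I would first recast the statement in the language of the filtration: since $D^m a=0$ is the same as $a\in\ker D^m={\mathcal P}_{m-1}(G,A)$ (with the convention ${\mathcal P}_{-1}(G,A)=\{0\}$, so that the cases $m=0$ or $n=0$, in which $a$ or $b$ vanishes, are trivial), it is equivalent to prove that $a\in{\mathcal P}_p(G,A)$ and $b\in{\mathcal P}_q(G,A)$ imply $ab\in{\mathcal P}_{p+q}(G,A)$ (and then take $p=m-1$, $q=n-1$). I would establish this by induction on $p+q$. The base case $p=q=0$ is immediate, as ${\mathcal P}_0(G,A)=A^G$ is multiplicatively closed because the action is by automorphisms. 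For the inductive step with $p+q\ge1$, apply Lemma \ref{Lm: [D{n+1}a](g1,...,g{n+1})= [Dnag1](g2,...,g{n+1}) - [Dna](g2,...,g{n+1})} to the element $ab$ with index $p+q$ to obtain
$$
[D^{p+q+1}(ab)](g_1,\ldots,g_{p+q+1})=[D^{p+q}\bigl((ab)^{g_1}-ab\bigr)](g_2,\ldots,g_{p+q+1}),
$$
so it suffices to show $(ab)^{g_1}-ab\in{\mathcal P}_{p+q-1}(G,A)=\ker D^{p+q}$. By part $(a)$, $(ab)^{g_1}-ab=[D^1(ab)](g_1)=a^{g_1}\{[D^1 b](g_1)\}+\{[D^1 a](g_1)\}b$. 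Applying the same Lemma \ref{Lm: [D{n+1}a](g1,...,g{n+1})= [Dnag1](g2,...,g{n+1}) - [Dna](g2,...,g{n+1})} to $a$ and to $b$ (together with linearity of $D^q$) shows that $[D^1 a](g_1)\in{\mathcal P}_{p-1}(G,A)$ and $[D^1 b](g_1)\in{\mathcal P}_{q-1}(G,A)$, while $a^{g_1}\in{\mathcal P}_p(G,A)$ by Proposition \ref{Pn is G-invariant}; the induction hypothesis then gives $a^{g_1}\{[D^1 b](g_1)\}\in{\mathcal P}_{p+q-1}(G,A)$ and $\{[D^1 a](g_1)\}b\in{\mathcal P}_{p+q-1}(G,A)$, so their sum lies in ${\mathcal P}_{p+q-1}(G,A)$, closing the induction. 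That ${\mathcal P}(G,A)=\bigcup_{n}{\mathcal P}_n(G,A)$ is a subring then follows from this product bound, the inclusions (\ref{Pn is contained in P{n+1}}), and the fact that each ${\mathcal P}_n(G,A)$ is an additive subgroup (the unit, if present, being $G$-invariant and hence in ${\mathcal P}_0(G,A)$).

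For $(c)$, I would induct on $n$ using the recursion $D^n=d_nD^{n-1}$ of (\ref{definition of iterated difference operators}). For $n=1$, $[D^1(ab)](g)=a^gb^g-ab=ab^g-ab=a\{[D^1 b](g)\}$ because $a\in A^G$. Assuming $D^{n-1}(ab)=a\,D^{n-1}b$, the formula (\ref{d homomorphisms}) for $d_n$ gives
$$
[D^n(ab)](g_1,\ldots,g_n)=\{[D^{n-1}(ab)](g_1,\ldots,g_{n-1})\}^{g_n}-[D^{n-1}(ab)](g_1,\ldots,g_{n-1});
$$
substituting the inductive hypothesis and using $a^{g_n}=a$ to pull $a$ through the automorphism $(\cdot)^{g_n}$ yields $a\{[D^n b](g_1,\ldots,g_n)\}$, as desired.

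The main obstacle is the bookkeeping in $(b)$: one must be sure that $[D^1 a](g_1)$ and $[D^1 b](g_1)$ really drop exactly one level in the filtration ${\mathcal P}_\bullet$ — this is precisely where Lemma \ref{Lm: [D{n+1}a](g1,...,g{n+1})= [Dnag1](g2,...,g{n+1}) - [Dna](g2,...,g{n+1})} is used — so that the two products arising from the Leibniz expansion land in ${\mathcal P}_{p+q-1}(G,A)$ and the induction on $p+q$ closes; the degenerate values $p=0$ or $q=0$ are then absorbed uniformly by the convention ${\mathcal P}_{-1}(G,A)=\{0\}$.
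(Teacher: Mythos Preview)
Your proof is correct and follows the same overall strategy as the paper. Parts $(a)$ and $(c)$ are routine; for $(c)$ you induct on $n$ via the recursion $D^n=d_nD^{n-1}$, while the paper instead appeals once to the closed formula of Lemma~\ref{Lm: formula for Dna}, but both arguments are one line. For $(b)$ both proofs run the same induction on the total degree and both reduce, via Lemma~\ref{Lm: [D{n+1}a](g1,...,g{n+1})= [Dnag1](g2,...,g{n+1}) - [Dna](g2,...,g{n+1})}, to showing that $(ab)^{g_1}-ab$ lies one step lower in the filtration. The difference is in the splitting of this element: you apply the Leibniz formula directly to get the two summands $a^{g_1}(b^{g_1}-b)$ and $(a^{g_1}-a)b$, and invoke Proposition~\ref{Pn is G-invariant} to know $a^{g_1}\in\mathcal P_p$, so each summand falls to the induction hypothesis immediately. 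The paper instead pairs $a$ (not $a^{g_1}$) with $b^{g_1}-b$, obtaining $a^{g_1}b+ab^{g_1}-2ab\in\mathcal P_{p+q-1}$, and then corrects the mismatch by a third application of the induction hypothesis to the product $(a^{g_1}-a)(b^{g_1}-b)$. Your route is one step shorter at the price of citing Proposition~\ref{Pn is G-invariant}; the paper's version is self-contained within the section but slightly longer.
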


\pf
\noindent $(a)$ (\ref{Leibniz-type formula}) is straightforward:
$$
\aligned
\left[D^1(ab)\right](g)&=(ab)^g-ab=a^g b^g-ab=a^g\cdot(b^g-b)+(a^g-a)\cdot b\\
&=a^g\cdot \{[D^1 b](g)\} +\{[D^1 a](g)\}\cdot b\,.
\endaligned
$$
$(b)$ The proof is by induction in $N=m+n$. Since $D^0=\id_A$,
statement (\ref{D{m+n-1}(ab)=0 whenever Dma=Dnb=0}) is trivial whenever one of the numbers $m,n$ is $0$;
hence (\ref{D{m+n-1}(ab)=0 whenever Dma=Dnb=0}) is true for $N=1$ and, from now on, we may assume
$m,n\in\mathbb N$. Suppose that for some $N\ge 1$, statement
(\ref{D{m+n-1}(ab)=0 whenever Dma=Dnb=0}) is true for all $m,n\in\mathbb N$ with $m+n=N$.
Take any $m,n\in{\mathbb N}$ with $m+n=N+1$ and assume $D^m a= D^n b=0$.
By Lemma \ref{Lm: [D{n+1}a](g1,...,g{n+1})=
[Dnag1](g2,...,g{n+1}) - [Dna](g2,...,g{n+1})},
$$
\aligned
\lbrack D^{m-1}(a^{g_1}-a)\rbrack (g_2,...,g_m)
&=[D^{m-1} (a^{g_1})](g_2,...,g_m) - [D^{m-1} a](g_2,...,g_m)\\
&=[D^m a](g_1,...,g_m)=0\,,
\endaligned
$$
that is,
\begin{equation}\label{f1}
D^{m-1} (a^{g_1}-a)=0 \ \ \text{for any} \ \, g_1\in G\,.
\end{equation}
Hence, taking into account the assumption $D^n b=0$ and
the induction hypothesis, we obtain
\begin{equation}\label{f2}
D^{m+n-2}[(a^{g_1}-a)b]=0\,.
\end{equation}
Similarly, the assumption $D^n b=0$ implies
\begin{equation}\label{f3}
D^{n-1} (b^{g_1}-b)=0 \ \ \text{for any} \ \, g_1\in G\,.
\end{equation}
combining this with the assumption $D^m a=0$ and
the induction hypothesis, we get
\begin{equation}\label{f4}
D^{m+n-2}[a(b^{g_1}-b)]=0\,.
\end{equation}
Summing up relations (\ref{f2}) and (\ref{f4}) we see that
$D^{m+n-2}[(a^{g_1}-a)b+a(b^{g_1}-b)]=0$, that is,
\begin{equation}\label{f5}
D^{m+n-2}(a^{g_1}b+ab^{g_1}-2ab)=0\,.
\end{equation}
Relation (\ref {f1}) certainly implies that
$D^m (a^{g_1}-a)=0$; in view of (\ref {f3}) and the
induction hypothesis, this shows that
$$
D^{m+n-2}[(a^{g_1}-a)(b^{g_1}-b)]=0\,.
$$
Thus,
\begin{equation}\label{f6}
D^{m+n-2}(a^{g_1}b^{g_1}-ab^{g_1}-a^{g_1}b+ab)=0\,.
\end{equation}
Summing up (\ref{f5}) and (\ref{f6}), we get
\begin{equation}\label{f7}
D^{m+n-2}(a^{g_1}b^{g_1}-ab)=0\,.
\end{equation}
Finally, we apply Lemma \ref{Lm: [D{n+1}a](g1,...,g{n+1})=
[Dnag1](g2,...,g{n+1}) - [Dna](g2,...,g{n+1})} to the element
$D^{m+n-1}(ab)$ and take into account (\ref{f7}) to see that for
every $g_1,g_2,...,g_{m+n-1}\in G$, we have
$$
\aligned
\lbrack D^{m+n-1}(ab&)\rbrack  (g_1,g_2,...,g_{m+n-1})\\
&=\{D^{m+n-2}[(ab)^{g_1}]\}(g_2,...,g_{m+n-1})
-[D^{m+n-2}(ab)](g_2,...,g_{m+n-1})\\
&=\lbrack D^{m+n-2}(a^{g_1}b^{g_1}-ab)\rbrack (g_2,...,g_{m+n-1})=0\,.
\endaligned
$$
Therefore, $D^{m+n-1}(ab)=0$, which completes the induction step
and proves $(b)$.

\noindent $(c)$
This follows immediately from
Lemma \ref{Lm: formula for Dna} and the fact that for any $g_1, \ldots, g_n \in G$ and $1 \leq i_1<\ldots <i_s \leq n$, one has
$$(ab)^{\pi_{i_1, \ldots, i_s}(g_1, \ldots, g_n)}=ab^{\pi_{i_1, \ldots, i_s}(g_1, \ldots, g_n)}.$$
\ep

\section{Polynomial-like elements related to lattices}
\label{sect: Polynomial-like elements related to lattice}


In this section, we now present an application of the Main Theorem to a concrete but important case: $G$ is a lattice which acts naturally \footnote{I.e., the action is isometric, free, properly discontinuous, and co-compact.} on an Euclidean space whose dimension is the same as the rank of the abelian group $G$. In this case, $A$ is a $G$-invariant subalgebra of the algebra of continuous functions defined on the Euclidean space (see Example \ref{Expl: polynomial-like elements in function spaces}).
We will first prove that polynomial-like elements of order $n$ are exactly polynomials of the same order with $G$-periodic coefficients. As a consequence, we obtain Corollary \ref{polynomial-like-solutions} which states roughly that given a periodic elliptic operator $\mathcal{D}$, all of the dimensions of the spaces of polynomial-like solutions of $\mathcal{D}$ of order $n$ ($n \in \mathbb{Z}_+$) are finite provided that there are only finitely many linealy independent periodic solutions. Notice that the same results should hold for the case of periodic elliptic operators on co-compact abelian coverings; however, for simplicity, here we focus first on the Euclidean case and then consider briefly the covering case.

\medskip
\subsection{Polynomials with periodic coefficients}
\label{subsec: G-periodic polynomials}
Let $G\cong{\mathbb Z}^r$ be a lattice of rank $r$ in ${\mathbb R}^r$.
This lattice acts naturally in the space $C(\mathbb R^r)$ of all
(real or complex) continuous functions on $\mathbb R^r$; namely \footnote{If we wish, this action may be regarded as a right one;
this is of no importance since $G$ is commutative.},
\begin{equation}\label{Gamma-action on functions}
\aligned
C(\mathbb R^r)\ni &f\mapsto f^g\to C(\mathbb R^r)\,,\\
\ \, &\text{where}\ \,
f^g(x)=f(T_g x)=f(x+g)\,, \ \ x\in\mathbb R^r\,, \ \,
g\in G\,.
\endaligned
\end{equation}
Actually, all mappings (\ref{Gamma-action on functions})
are automorphisms of the algebra $C(\mathbb R^r)$.
Denote by $C^G$ the subalgebra of $C(\mathbb R^r)$
consisting of all $G$-invariant (or, which is the same, $G$-periodic)
functions $f\in C(\mathbb R^r)$. For $n\in{\mathbb Z}_+$,
set
\begin{equation*}\label{Gamma-periodic polynomials}
\aligned
&P^G=C^G[x_1,...,x_r]\,,\\
&P^G_n=C^G_n[x_1,...,x_r]=\{p\in C^G[x_1,...,x_r]\mid \deg p\le n\}\,;
\endaligned
\end{equation*}
here $x_1,...,x_r$ are the standard coordinates in $\mathbb R^r$.
In other words, $P^G$ is the algebra (over $\mathbb C$ or $\mathbb R$)
of all polynomials in the coordinates $x_1,...,x_r$ with continuous
$G$-periodic coefficients; $P^G_n$ is the vector subspace of $P^G_n$
consisting of all polynomials in $x_1,...,x_r$ of degree at most $n$
with continuous $G$-periodic coefficients. By misuse of language, we call
elements in $P^G$ as {\em $G$-periodic polynomials} (or more specifically, {\em $G$-periodic polynomials of degree at most}
$n$ for elements in $P^G_n$). A $G$-periodic polynomial of the form
$Q_n=f(x)\cdot x_{i_1}\cdots x_{i_n}$ ($1\le i_1,...,i_n\le r$)
with $G$-periodic coefficient $f\ne 0$ is said to be
{\em $G$-periodic monomial of degree $n$};
such a monomial may also be written in the form
$Q_n=f(x)\cdot x_1^{j_1}\cdots x_r^{j_r}$, where $j_1,...,j_r\in{\mathbb Z}_+$
and $j_1+...+j_r=n$.
Any $G$-periodic polynomial of degree at most $n$
is a sum of $G$-periodic monomials of degree at most $n$;
such a representation is unique (up to an order of summands).
Clearly, all $P^G_n$ are $G$-invariant subspaces
of $C(\mathbb R^r)$, and $P^G$ itself is a $G$-invariant subalgebra
of $C(\mathbb R^r)$.
\medskip

\noindent The $G$-action in $C(\mathbb R^r)$ gives rise
to iterated difference operators
\begin{equation*}\label{Dn in C(Rr)}
D^n\colon C({\mathbb R}^r)\to {\mathcal C}^n(G,C({\mathbb R}^r))
\end{equation*}
(see (\ref{d homomorphisms}) and
(\ref{definition of iterated difference operators})).
As in Section \ref{sect: Polynomial-like elements in vector spaces with
group actions}, we define the subspaces
${\mathcal P}_n(G,C({\mathbb R}^r))\subseteq C({\mathbb R}^r)$
of {\em polynomial-like elements in $C({\mathbb R}^r)$ of order at most $n$}
by
\begin{equation*}\label{polynomial-like elements in C(Rr)}
{\mathcal P}_n(G,C({\mathbb R}^r))=\ker D^{n+1}\,
\end{equation*}
(see Definition \ref{Def: G-polynomials in A}).

\begin{lemma}\label{Lm: Dn of n-monomial}
$(a)$ $D^{n+1} Q_m=0$ for any $G$-periodic monomial of degree $m\le n$.
\smallskip

\noindent $(b)$ Let
$Q_n=x_{i_1}\cdots x_{i_n}=x_1^{j_1}\cdots x_r^{j_r}$, where
$1\le i_1,...,i_n\le r$, \ $j_1,...,j_r\in{\mathbb Z}_+$, \ $j_1+...+j_r=n$,
and each coordinate
$x_k$ $(k=1,...,r)$ enters the product $x_{i_1}\cdots x_{i_n}$
precisely $j_k$ times.
Let $g_1,...,g_n\in G$ and
$$
g_i=(g_{i,1},...,g_{i,r})=\sum_{k=1}^r g_{i,k}{\mathbf e}_k\quad
(1\le i\le n)\,,
$$
where $\{{\mathbf e}_1,...,{\mathbf e}_r\}$ is the standard basis
in ${\mathbb R}^r$. Then
\begin{equation}\label{Dn of n-monomial}
\aligned
\lbrack D^n Q_n \rbrack (g_1,...,g_n)
&=\sum_{{\boldsymbol\sigma}\in{\mathbf S}(n)}
g_{1,i_{s_1}}g_{2,i_{s_2}}\cdots g_{n,i_{s_n}}\\
&=j_1!\cdots j_r!\cdot\sum_{\boldsymbol\kappa}
g_{1,k_1}g_{2,k_2}\cdots g_{n,k_n}\,;
\endaligned
\end{equation}
here ${\boldsymbol\sigma}=(s_1,...,s_n)\in{\mathbf S}(n)$
in the first sum runs over all the permutations of the indices $1,...,n$,
whereas ${\boldsymbol\kappa}=(k_1,...,k_n)$ in the second sum runs over all
finite sequences of length $n$ containing each of the indices
$k\in\{1,...,r\}$ precisely $j_k$ times.
\end{lemma}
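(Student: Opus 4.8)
The plan is to reduce everything to the single-variable building block $D^1$ applied to a coordinate function, then exploit the Leibniz-type formula and the $A^G$-linearity of $D^n$ established in Proposition \ref{Prp: P(G,A) is a ring}. First I would record the elementary computation $[D^1 x_k](g) = (x_k)^g - x_k = (x_k + g_k) - x_k = g_k \in \mathbb{R} \subseteq C^G$; in particular each coordinate $x_k$ lies in $\mathcal{P}_1(G, C(\mathbb{R}^r))$, and $D^1 x_k$ is the constant-valued $1$-morphism $g \mapsto g_k$. Part $(a)$ then follows quickly: by the coordinate computation $x_k \in \mathcal{P}_1$, so by Proposition \ref{Prp: P(G,A) is a ring}$(b)$ a product $x_{i_1}\cdots x_{i_m}$ of $m$ coordinates lies in $\mathcal{P}_{1+1+\cdots+1-(m-1)}(G,C(\mathbb{R}^r)) = \mathcal{P}_m$; since any $G$-periodic coefficient $f$ is $G$-invariant, i.e.\ $f \in \mathcal{P}_0$, one more application of Proposition \ref{Prp: P(G,A) is a ring}$(b)$ gives $Q_m = f\cdot x_{i_1}\cdots x_{i_m} \in \mathcal{P}_{m+0} = \mathcal{P}_m \subseteq \mathcal{P}_n$ for $m \le n$, which is exactly $D^{n+1}Q_m = 0$.

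For part $(b)$ I would prove the formula $[D^n(x_{i_1}\cdots x_{i_n})](g_1,\ldots,g_n) = \sum_{\boldsymbol\sigma \in \mathbf{S}(n)} g_{1,i_{s_1}} g_{2,i_{s_2}} \cdots g_{n,i_{s_n}}$ by induction on $n$ using Lemma \ref{Lm: [D{n+1}a](g1,...,g{n+1})=[Dnag1](g2,...,g{n+1}) - [Dna](g2,...,g{n+1})}, which expresses $[D^n a](g_1,\ldots,g_n)$ in terms of $[D^{n-1}(a^{g_1})]$ and $[D^{n-1} a]$ on $(g_2,\ldots,g_n)$. Write $a = x_{i_1}\cdots x_{i_n}$. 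Expanding $a^{g_1} = \prod_{j=1}^n (x_{i_j} + g_{1,i_j})$ and subtracting $a$, the top-degree term cancels and one is left with $a^{g_1} - a = \sum_{j=1}^n g_{1,i_j}\big(\prod_{\ell \ne j} x_{i_\ell}\big) + (\text{lower-degree terms})$. The lower-degree terms are $G$-periodic polynomials of degree $\le n-2$, hence killed by $D^{n-1}$ by part $(a)$; so $[D^{n-1}(a^{g_1}-a)](g_2,\ldots,g_n) = \sum_{j=1}^n g_{1,i_j}\,[D^{n-1}(\prod_{\ell\ne j} x_{i_\ell})](g_2,\ldots,g_n)$, where I use $A^G$-linearity of $D^{n-1}$ (Proposition \ref{Prp: P(G,A) is a ring}$(c)$) to pull the scalar $g_{1,i_j}$ out. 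Applying the induction hypothesis to each $(n-1)$-fold product and reorganizing the resulting sum over permutations of $\{2,\ldots,n\}$ together with the choice of which index is ``assigned'' to position $1$ reassembles the full sum over $\mathbf{S}(n)$; the term $[D^{n-1}a](g_2,\ldots,g_n)$ from Lemma \ref{Lm: [D{n+1}a](g1,...,g{n+1})=[Dnag1](g2,...,g{n+1}) - [Dna](g2,...,g{n+1})} vanishes because $a$ has degree $n$ and $D^{n-1}$ annihilates it by part $(a)$. The base case $n=1$ is just $[D^1 x_{i_1}](g_1) = g_{1,i_1}$.

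Finally, the second equality in \eqref{Dn of n-monomial} is a purely combinatorial bookkeeping step: grouping the permutations $\boldsymbol\sigma \in \mathbf{S}(n)$ according to the resulting sequence $\boldsymbol\kappa = (i_{s_1},\ldots,i_{s_n})$ of values, each sequence $\boldsymbol\kappa$ containing the index $k$ exactly $j_k$ times arises from exactly $j_1!\cdots j_r!$ permutations (permuting among themselves the $j_k$ positions where $x_k$ sits). I expect the main obstacle to be the reindexing in the inductive step of part $(b)$: one must be careful that when $a^{g_1}-a$ is differentiated, the ``missing factor'' products $\prod_{\ell \ne j} x_{i_\ell}$ have their coordinate multiset reduced by exactly one copy of $i_j$, and that summing the induction-hypothesis expressions over $j$ faithfully reconstructs $\sum_{\boldsymbol\sigma\in\mathbf S(n)}$ without double counting — this is the only place where a genuine combinatorial argument, rather than a formal manipulation, is needed.
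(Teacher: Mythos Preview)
Your approach is essentially the paper's: part $(a)$ via the ring property of $\mathcal P_\bullet$ (Proposition~\ref{Prp: P(G,A) is a ring}$(b)$), and part $(b)$ by induction using Lemma~\ref{Lm: [D{n+1}a](g1,...,g{n+1})=[Dnag1](g2,...,g{n+1}) - [Dna](g2,...,g{n+1})} to reduce $[D^n a](g_1,\dots,g_n)$ to $D^{n-1}(a^{g_1}-a)$, then expanding $a^{g_1}-a$, killing the lower-degree remainder with part $(a)$, and invoking the induction hypothesis on the $(n-1)$-fold products. The paper phrases the expansion via the Leibniz formula applied to $x_{i_1}\cdot Q_{n-1}$ rather than expanding the full product, but the computation is identical.

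Two slips to fix. First, in part $(a)$ your displayed degree count ``$\mathcal P_{1+\cdots+1-(m-1)}$'' evaluates to $\mathcal P_1$, not $\mathcal P_m$; the correct bookkeeping is that $\mathcal P_p\cdot\mathcal P_q\subseteq\mathcal P_{p+q}$ (this is exactly what Proposition~\ref{Prp: P(G,A) is a ring}$(b)$ says once you translate $D^m a=0\Leftrightarrow a\in\mathcal P_{m-1}$), so an $m$-fold product of $\mathcal P_1$ elements lands in $\mathcal P_m$. Second, and more important, your closing claim in part $(b)$ that ``$[D^{n-1}a](g_2,\dots,g_n)$ vanishes because $a$ has degree $n$ and $D^{n-1}$ annihilates it by part $(a)$'' is \emph{false}: part $(a)$ only gives $D^{n+1}a=0$, not $D^{n-1}a=0$ (e.g.\ $D^0 x_1=x_1\ne 0$). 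Fortunately you do not need this: by linearity $[D^{n-1}(a^{g_1}-a)]=[D^{n-1}(a^{g_1})]-[D^{n-1}a]$, which is already exactly $[D^n a](g_1,\cdot)$ by Lemma~\ref{Lm: [D{n+1}a](g1,...,g{n+1})=[Dnag1](g2,...,g{n+1}) - [Dna](g2,...,g{n+1})}. There is no leftover $[D^{n-1}a]$ term to dispose of; just delete that clause.
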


\pf
$(a)$ The proof is by induction in $n$.
If $f$ is a $G$-periodic function
then $f^g=f$ for all $g\in G$, that is,
\begin{equation}\label{D1(Gamma-periodic function)=0}
D^1 f=0\quad\text{for any} \ \, G\text{-periodic} \ \, f\,.
\end{equation}
For each coordinate function $\phi_i(x)=x_i$
($1\le i\le r$) and any $g=(g_1,...,g_r)\in G$,
we have
$$
\phi_i^g(x)=\phi_i(x+g)=x_i+g_i\quad
\text{and}\quad
\{[D^1\phi_i](g)\}(x)=\phi_i(x+g)-\phi_i(x)=g_i.
$$
Since the right hand side $g_i$ of the latter relation
does not depend on $x$, it is $G$-invariant and, therefore,
$$
\{[D^2\phi_i](g',g'')\}
=\{[D^1\phi_i](g')\}^{g''}-\{[D^1\phi_i](g')\}=0\,,
$$
which means that $D^2\phi_i=0$. Due to Proposition \ref{Prp: P(G,A) is a ring},
this implies that
\begin{equation}\label{D2(axi)=0}
D^2(a\phi_i)=0\quad\text{for all} \ \, i=1,...,r \
{\mathrm {whenever}} \ a\in C(\mathbb{R}^r)^G\,.
\end{equation}
Relations (\ref{D1(Gamma-periodic function)=0}), (\ref{D2(axi)=0})
provide us with a base of induction.
\smallskip

\noindent Suppose that statement $(a)$ is already proven
for some $n\ge 1$. Let $Q_m=f(x)\cdot x_{i_1}\cdots x_{i_m}$
be a $G$-periodic monomial of degree $m\le n+1$. If $m\le n$ then,
by the induction hypothesis, $D^{n+1} Q_m=0$, which
certainly implies $D^{n+2} Q_m=0$. On the other hand, if $m=n+1$,
we may represent the monomial $Q_{n+1}=Q_m$ as a product
$Q_{n+1}=Q_n\cdot x_{i_{n+1}}$, where $Q_n$ is a
$G$-periodic monomial of degree $n$.
By the induction hypothesis and (\ref{D2(axi)=0}), we have
$$
D^{n+1} Q_n=0\quad \text{and}\quad D^2 x_{i_{n+1}}
=D^2\phi_{i_{n+1}}=0\,.
$$
Using this and Proposition \ref{Prp: P(G,A) is a ring}$(b)$
(with the pair $(n+1,2)$ instead of $(m,n)$), we obtain
$$
D^{n+2}(Q_{n+1})=D^{(n+1)+2-1}(Q_n\cdot x_{i_{n+1}})=0\,,
$$
which concludes the induction step and proves $(a)$.
\smallskip

\noindent $(b)$ The proof of the first equality in
(\ref{Dn of n-monomial}) is by induction on $n$ (the coincidence of the two
sums in (\ref{Dn of n-monomial})
under the condition $x_{i_1}\cdots x_{i_n}=x_1^{j_1}\cdots x_r^{j_r}$
is an elementary combinatorial fact; we skip its proof).
\smallskip

\noindent First, if $n=1$ we have $Q_1=x_i$ for some
$i\in\{1,...,r\}$, and hence $[D^1 Q_1](g_1)=(x_i+g_{1,i})-x_i=g_{1,i}$.
On the other hand, it is easily seen that the first sum in
(\ref{Dn of n-monomial}) is equal to $g_{1,i}$ whenever
$n=1$, $j_i=1$, and $j_k=0$ for $k\ne i$.
This proves formula (\ref{Dn of n-monomial}) for $n=1$.
\smallskip

\noindent Suppose that for some $n\ge 2$, the formula
(\ref{Dn of n-monomial}) is already proven for all $m\le n-1$.
Let us show that then (\ref{Dn of n-monomial}) holds true
for $m=n$. Take a monomial $Q_n=x_{i_1}\cdots x_{i_n}$ \
($1\le i_1,...,i_n\le r$) and write it in the form
$Q_n=x_{i_1}\cdot Q_{n-1}$, where $Q_{n-1}=x_{i_2}\cdots x_{i_n}$.
By Lemma \ref{Lm: [D{n+1}a](g1,...,g{n+1})=
[Dnag1](g2,...,g{n+1}) - [Dna](g2,...,g{n+1})},
we have
\begin{equation}\label{[Dn Qn}
\aligned
\lbrack D^n Q_n\rbrack (g_1,g_2,...,g_n)
&=[D^{n-1} (Q_n^{g_1})](g_2,...,g_n) - [D^{n-1} Q_n](g_2,...,g_n)\\
&=\{D^{n-1} \left([D^1 Q_n](g_1)\right)\}(g_2,...,g_n)\\
&=\{D^{n-1}\left([D^1 (x_{i_1}\cdot Q_{n-1})](g_1)\right)\}(g_2,...,g_n)\,.\\
\endaligned
\end{equation}
By the Leibniz-type formula (Proposition \ref{Prp: P(G,A) is a ring}$(a)$),
$$
\aligned
\lbrack &D^1 (x_{i_1}\cdot Q_{n-1})\rbrack (g_1)
=(x_{i_1}^{g_1})\cdot [D^1 Q_{n-1}](g_1)
+ \{[D^1 x_{i_1}](g_1)\}\cdot Q_{n-1}\\
&=(x_{i_1}+g_{1,i_1})\cdot [D^1 Q_{n-1}](g_1)+ g_{1,i_1}\cdot Q_{n-1}\\
&=(x_{i_1}+g_{1,i_1})\cdot
[(x_{i_2}+g_{1,i_2})\cdots (x_{i_n}+g_{1,i_n}) - x_{i_2}\cdots x_{i_n}]
+g_{1,i_1}\cdot x_{i_2}\cdots x_{i_n}\,,
\endaligned
$$
which shows that
\begin{equation}\label{D1(xi1 Q{n-1})}
\aligned
\lbrack D^1 (x_{i_1}\cdot Q_{n-1})\rbrack (g_1)
&=x_{i_1}\cdot
\sum_{s=2}^n g_{1,i_s}\cdot x_{i_2}\cdots{\widehat{x_{i_s}}}\cdots x_{i_n}
+g_{1,i_1}\cdot x_{i_2}\cdots x_{i_n}\\
&\hskip100pt+ P_{n-2}(x_1,...,x_r;\,g_1)\\
&=\sum_{s=1}^n g_{1,i_s}\cdot x_{i_1}\cdots{\widehat{x_{i_s}}}\cdots x_{i_n}
+ P_{n-2}(x_1,...,x_r;\,g_1)\,,
\endaligned
\end{equation}
where $P_{n-2}(x_1,...,x_r)$ is a polynomial in $x_1,...,x_r$
of degree at most $n-2$ (with constant coefficiens depending on
the element $g_1\in G$). It follows from statement $(a)$ that
$D^{n-1}P_{n-2}=0$; therefore, formulae (\ref{[Dn Qn})
and (\ref {D1(xi1 Q{n-1})}) imply
\begin{equation}\label{D{n-1}[D1(xi1 Q{n-1})]}
\aligned
\lbrack D^n Q_n\rbrack (g_1,g_2,...,g_n)
&=\Big[ D^{n-1}\sum_{s=1}^n
g_{1,i_s}\cdot x_{i_1}\cdots{\widehat{x_{i_s}}}\cdots x_{i_n}
\Big] (g_2,...,g_n)\\
&=\sum_{s=1}^n
g_{1,i_s}\cdot
\big[D^{n-1} \big(x_{i_1}\cdots{\widehat{x_{i_s}}}\cdots x_{i_n}\big)\big]
(g_2,...,g_n)\,.
\endaligned
\end{equation}
By the induction hypothesis,
\begin{equation}\label{D{n-1}(x{i1}...skip{x{is}}...x{in})}
\big[ D^{n-1}\big(x_{i_1}\cdots {\widehat{x_{i_s}}}
\cdots x_{i_n}\big)\big] (g_2,...,g_n)
=\sum_{{\boldsymbol\sigma}\!_s}
g_{2,i_{{\boldsymbol\sigma}\!_s (2)}}\cdots
g_{n,i_{{\boldsymbol\sigma}\!_s (n)}}\,,
\end{equation}
where ${\boldsymbol\sigma}\!_s$ runs over all one-to-one
mappings $\{2,...,n\}\to\{1,...,{\widehat{s}},...,n\}$.
It follows from (\ref{D{n-1}[D1(xi1 Q{n-1})]}) and
(\ref{D{n-1}(x{i1}...skip{x{is}}...x{in})}) that
$$
\aligned
\lbrack D^n Q_n\rbrack (g_1,g_2,...,g_n)
&=\sum_{s=1}^n g_{1,i_s}\cdot
\big[D^{n-1} \big(x_{i_1}\cdots{\widehat{x_{i_s}}}\cdots x_{i_n}\big)\big]
(g_2,...,g_n)\\
&=\sum_{s=1}^n g_{1,i_s}\cdot
\sum_{{\boldsymbol\sigma}\!_s}
g_{2,i_{{\boldsymbol\sigma}\!_s (2)}}\cdots
g_{n,i_{{\boldsymbol\sigma}\!_s (n)}}\\
&=\sum_{{\boldsymbol\sigma}\in{\mathbf S}(n)}
g_{1,i_{s_1}}g_{2,i_{s_2}}\cdots g_{n,i_{s_n}}\,,
\endaligned
$$
which concludes the induction step and proves Lemma.
\ep

\begin{proposition}\label {Gamma-periodic polynomials are
polynomial-like elements}
Let a lattice $G\subset\mathbb R^r$ of rank $r$
act naturally in $C({\mathbb R}^r)$. A function $p\in C({\mathbb R}^r)$
is a $G$-periodic polynomial of degree at most $n$ if and only if
$D^{n+1}p=0$. In other words,
\begin{equation*}\label {PnGamma is contained in Pn(Gamma,C(Rr))=ker D{n+1}}
P_n^G={\mathcal P}_n(G,C({\mathbb R}^r))
\Def\ker\left[D^{n+1}\colon C({\mathbb R}^r)
\to {\mathcal C}^{n+1}(G,C({\mathbb R}^r))\right]\,.
\end{equation*}
\end{proposition}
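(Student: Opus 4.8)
The plan is to prove the two inclusions
$P_n^G\subseteq{\mathcal P}_n(G,C({\mathbb R}^r))$
and
${\mathcal P}_n(G,C({\mathbb R}^r))\subseteq P_n^G$
separately. The first should be immediate: every $p\in P_n^G$ is a finite sum of $G$-periodic monomials of degree at most $n$, Lemma~\ref{Lm: Dn of n-monomial}$(a)$ says that $D^{n+1}$ annihilates each such monomial, and linearity of $D^{n+1}$ then yields $D^{n+1}p=0$, i.e. $p\in{\mathcal P}_n(G,C({\mathbb R}^r))$. The reverse inclusion I would prove by induction on $n$. The case $n=0$ is clear, since $D^1p=0$ forces $p^g=p$ for all $g\in G$, hence $p\in C^G=P_0^G$.

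For the inductive step, assume ${\mathcal P}_{n-1}(G,C({\mathbb R}^r))=P_{n-1}^G$ and take $p$ with $D^{n+1}p=0$; write $A=C({\mathbb R}^r)$ and $A^G=C^G$. By Proposition~\ref{Prp: DnPn(G,AG) is contained in LnS(G,AG)} the cochain $L:=D^np$ belongs to the space ${\mathcal L}_n^{\mathcal S}(G,A^G)$ of symmetric $A^G$-valued $n$-morphisms of $G$. The crux is to exhibit a $G$-periodic polynomial $q$, all of whose monomials have degree $n$, with $D^nq=L$. Granted this, $D^n(p-q)=0$, so by Definition~\ref{Def: G-polynomials in A} we have $p-q\in\ker D^n={\mathcal P}_{n-1}(G,A)$, whence $p-q\in P_{n-1}^G$ by the induction hypothesis, and therefore $p=q+(p-q)\in P_n^G$, which closes the induction.

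To construct $q$ I would invoke Lemma~\ref{Lm: Dn of n-monomial}$(b)$: for a degree-$n$ monomial $x^{\boldsymbol j}=x_1^{j_1}\cdots x_r^{j_r}$ it shows that $D^n(x^{\boldsymbol j})$ takes constant values, equal to $j_1!\cdots j_r!$ times the symmetrized coordinate product. Comparing with the description of ${\mathcal L}_n^{\mathcal S}(G,A^G)$ as the space of symmetric polylinear forms on $G$ (Lemma~\ref{Lm: polymorphisms of Zr} together with the monomial basis $\{\ell_I^i\}$ from the proof of Lemma~\ref{Lm: spaces of polylinear forms are of finite dim}$(d)$), and using $\charac F=0$ so that the integers $j_1!\cdots j_r!$ are invertible, one sees that the $\binom{n+r-1}{r-1}$ forms $\{D^n(x^{\boldsymbol j})\}_{\,j_1+\ldots+j_r=n}$ constitute an $A^G$-module basis of ${\mathcal L}_n^{\mathcal S}(G,A^G)$. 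Hence $L=\sum_{j_1+\ldots+j_r=n}f_{\boldsymbol j}\,D^n(x^{\boldsymbol j})$ for unique $f_{\boldsymbol j}\in A^G$, and setting $q:=\sum_{j_1+\ldots+j_r=n}f_{\boldsymbol j}\,x^{\boldsymbol j}\in P_n^G$ and using the $A^G$-linearity of $D^n$ (Proposition~\ref{Prp: P(G,A) is a ring}$(c)$) gives $D^nq=L=D^np$, as required.

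The main obstacle is precisely this basis statement for $\{D^n(x^{\boldsymbol j})\}$, which rests on the explicit formula of Lemma~\ref{Lm: Dn of n-monomial}$(b)$ and on identifying ${\mathcal L}_n^{\mathcal S}(G,A^G)$ with the symmetric polylinear forms on the ambient space ${\mathbb R}^r$. When $G$ is not literally the standard lattice ${\mathbb Z}^r$, I would first apply a linear change of coordinates carrying ${\mathbb Z}^r$ onto $G$ (this preserves polynomial degrees and intertwines the two translation actions), or else use Lemma~\ref{Lm: polymorphisms of Zr}$(c)$; in either case $\{D^n(x^{\boldsymbol j})\}$ is identified with nonzero scalar multiples of the standard symmetric monomial basis. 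The remaining ingredients are routine bookkeeping: the reduction to degree-$n$ monomials in building $q$, the identity $\ker D^n={\mathcal P}_{n-1}(G,A)$, and the observation that every monomial of $q$ has degree $n$, so $q\in P_n^G$.
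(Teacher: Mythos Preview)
Your proposal is correct and follows essentially the same route as the paper. Both argue the forward inclusion from Lemma~\ref{Lm: Dn of n-monomial}$(a)$ and linearity, and both run the reverse inclusion by induction on $n$, using that $D^n p\in{\mathcal L}_n^{\mathcal S}(G,A^G)$ and subtracting off a degree-$n$ polynomial $q$ with $D^nq=D^np$. The only difference is cosmetic: the paper writes down the coefficients directly as $a_{\boldsymbol\nu}=\frac{1}{\boldsymbol\nu!}(D^np)(\mathbf e_{i_1},\dots,\mathbf e_{i_n})$ and checks $D^n(p-q)=0$ on basis tuples, whereas you package the same computation as the statement that $\{D^n(x^{\boldsymbol j})\}$ is an $A^G$-module basis of ${\mathcal L}_n^{\mathcal S}(G,A^G)$ and invoke Proposition~\ref{Prp: P(G,A) is a ring}$(c)$; your explicit remark on reducing a general lattice to $\mathbb Z^r$ by a linear coordinate change is also a point the paper leaves implicit.
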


\pf
Since all the iterated difference operators are linear,
the inclusion $P_n^G \subseteq \ker D^{n+1}$
follows immediately from Lemma \ref{Lm: Dn of n-monomial}$(a)$.
\medskip

\noindent The proof of the opposite inclusion
$\ker D^{n+1}\subseteq P_n^G$ is by induction in $n$.
Let $p\in C({\mathbb R}^r)$ and $D^1 p=0$; that is,
$p(x+g)=p(x)$ for all $x\in{\mathbb R}^r$ and $g\in G$,
which means that $p$ is a $G$-periodic polynomial of degree $0$.
This gives us the base of induction. Suppose that the inclusion
$\ker D^{n}\subseteq P_{n-1}^G$ is already proven
for some $n\ge 1$. Let $p\in C({\mathbb R}^r)$ and $D^{n+1}p=0$.
By part $(a)$ of Main Theorem, $D^n p$ is a symmetric $n$-polymorphism
on $G$ with values in $C^G({\mathbb R}^r)$. Hence,
$D^n p$ may be recovered from its values
$(D^n p)({\mathbf e}_{i_1},...,{\mathbf e}_{i_n})$, where
${\mathbf e}_1,...,{\mathbf e}_r$ is a free basis of $G$
and $(i_1,...,i_n)$ runs over $\{1,...,r\}^n$.
For every $\boldsymbol\nu=(\nu_1,...,\nu_r)\in{\mathbb Z}_+^r$
with $|\boldsymbol\nu|=\nu_1+...+\nu_r=n$, we
denote by ${\mathbf I}_{\boldsymbol\nu}$ the set of all
${\mathbf i}=(i_1,...,i_n)\in\{1,...,r\}^n$
such that the number of appearences of
each $j\in\{1,...,r\}$ in the sequence ${\mathbf i}=(i_1,...,i_n)$
is precisely $\nu_j$. Since $D^n p$ is symmetric,
the value $(D^n p)({\mathbf e}_{i_1},...,{\mathbf e}_{i_n})$
depends only on $\boldsymbol\nu=(\nu_1,...,\nu_r)\in{\mathbb Z}_+^r$
and does not depend on a choice of a sequence
${\mathbf i}=(i_1,...,i_n)\in{\mathbf I}_{\boldsymbol\nu}$.
Thus, we may define
\begin{equation*}\label{eq: recovering of main part1}
a_{\boldsymbol\nu}
\Def\frac{1}{\boldsymbol\nu\mathbf !}
\cdot(D^n p)({\mathbf e}_{i_1},...,{\mathbf e}_{i_n})\,,
\end{equation*}
where $(i_1,...,i_n)$ is an arbitrary element
of ${\mathbf I}_{\boldsymbol\nu}$.
Clearly $a_{\boldsymbol\nu}$
is a continuous $G$-invariant function on ${\mathbb R}^r$.
Let
\begin{equation}\label{eq: recovering of main part2}
p'=p-\sum_{\boldsymbol\nu: \ |\boldsymbol\nu|=n}
a_{\boldsymbol\nu}(x)\,x_1^{\nu_1}\cdots x_r^{\nu_r}\,.
\end{equation}
Certainly $D^{n+1} p'=0$. Moreover,
$(D^n p')({\mathbf e}_{i_1},...,{\mathbf e}_{i_n})=0$
for all $(i_1,...,i_n)\in\{1,...,r\}^n$ due to Lemma \ref{Lm: Dn of n-monomial}(b).
By part $(a)$ of Main Theorem, this implies that
$D^n p'=0$. The induction hypothesis implies that
$p'$ is a $G$-periodic polynomial of degree at most $n-1$.
According to (\ref{eq: recovering of main part2}), $p$
is a $G$-periodic polynomial of degree at most $n$, which completes
the proof.
\ep

\subsection{$G$-periodic polynomials in $G$-invariant subspaces}
\label{subsect: Gamma-periodic polynomials in Gamma-invariant subspaces}

Let $A$ be a $G$-invariant vector subspace of
$C(\mathbb R^r)$. Set $A^G=A\cap C^G$ and
$P_n^G(A)=A\cap P_n^G$. In other words,
$A^\Gamma$ consists of all $G$-invariant continuous functions that
belong to $A$, and $P_n^G(A)$ consists of all
$G$-periodic polynomials of degree at most $n$ that belong to $A$.
Clearly, $A^G$ and $P_n^G(A)$ are vector spaces.

\begin{theorem}\label {P_nGamma(A) is of finite dimension}
Suppose that the space $A^G$ is of finite dimension.
Then every $P_n^G(A)$ is of finite dimension as well.
\end{theorem}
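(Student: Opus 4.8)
The plan is to identify $P_n^G(A)$ with the space $\mathcal P_n(G,A)$ of $G$-polynomials of order at most $n$ in $A$, and then to apply the Main Theorem directly.

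First I would observe that the iterated difference operator $D^{n+1}$ attached to the $G$-action on $A$ is nothing but the restriction to $A$ of the operator $D^{n+1}$ attached to the $G$-action on $C(\mathbb R^r)$: this is immediate from the recursion (\ref{definition of iterated difference operators}) together with (\ref{d homomorphisms}), since the right $G$-action on $A$ is the restriction of the translation action (\ref{Gamma-action on functions}) and $A$ is $G$-invariant. Consequently, for $a\in A$ the condition $D^{n+1}a=0$ in $\mathcal C^{n+1}(G,A)$ is the same as the condition $D^{n+1}a=0$ in $\mathcal C^{n+1}(G,C(\mathbb R^r))$, so
\[
\mathcal P_n(G,A)=\{a\in A\mid D^{n+1}a=0\}=A\cap\ker\!\left[D^{n+1}\colon C(\mathbb R^r)\to\mathcal C^{n+1}(G,C(\mathbb R^r))\right]=A\cap\mathcal P_n(G,C(\mathbb R^r)).
\]
By Proposition \ref{Gamma-periodic polynomials are polynomial-like elements}, $\mathcal P_n(G,C(\mathbb R^r))=P_n^G$, and hence $\mathcal P_n(G,A)=A\cap P_n^G=P_n^G(A)$. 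In particular, taking $n=0$ gives $\mathcal P_0(G,A)=A^G$, which is finite-dimensional by hypothesis.

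Next I would verify that the hypotheses of the Main Theorem hold in this setting. Let $F$ be the ground field ($\mathbb R$ or $\mathbb C$) over which $C(\mathbb R^r)$ is taken; it has characteristic $0$. The translations (\ref{Gamma-action on functions}) are $F$-linear automorphisms of $C(\mathbb R^r)$, so the induced right $G$-action on the $F$-vector space $A$ is linear. The group $G\cong\mathbb Z^r$ is abelian, so $\widetilde G=G$, which is finitely generated; and $\dim_F A^G<\infty$ by assumption. The Main Theorem therefore gives $\dim_F\mathcal P_n(G,A)<\infty$ for every $n\in\mathbb Z_+$, which by the identification above is exactly $\dim_F P_n^G(A)<\infty$.

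There is no genuine obstacle in this argument; the only step deserving a word of care is the equality $\mathcal P_n(G,A)=A\cap\mathcal P_n(G,C(\mathbb R^r))$, but since the difference operators are built solely from the group action and the $G$-action on $A$ is inherited from $C(\mathbb R^r)$, this is a formality. If desired, one can also record the quantitative version: combining the identification with Proposition \ref{estimate-dim-Pn} yields the explicit bound $\dim_F P_n^G(A)\le(\dim_F A^G)\binom{n+r}{r}$.
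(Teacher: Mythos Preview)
Your proposal is correct and follows essentially the same route as the paper: identify $P_n^G(A)=A\cap P_n^G=A\cap\mathcal P_n(G,C(\mathbb R^r))=\mathcal P_n(G,A)$ via Proposition~\ref{Gamma-periodic polynomials are polynomial-like elements} and the observation that $D^{n+1}$ on $A$ is the restriction of $D^{n+1}$ on $C(\mathbb R^r)$, then invoke the Main Theorem. Your explicit verification of the Main Theorem's hypotheses and the optional appeal to Proposition~\ref{estimate-dim-Pn} for a quantitative bound are welcome but not substantive departures.
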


\pf
By the above definition and Proposition \ref{Gamma-periodic polynomials are
polynomial-like elements}, we have
\begin{equation}\label{P_nGamma(A) is contained in intersect A and
Pn(Gamma,C(Rr))}
P_n^G(A)=A\cap P_n^G=
A\cap {\mathcal P}_n(G,C({\mathbb R}^r))\,.
\end{equation}
On the other hand, it is clear that
\begin{equation}\label{Gamma-polynomials in A}
\aligned
A\cap {\mathcal P}_n(G,C({\mathbb R}^r))
&=A\cap \ker\,\{D^{n+1}\colon C({\mathbb R}^r)
\to{\mathcal C}^{n+1}(G, C({\mathbb R}^r))\}\\
&=\ker\,\{\left.D^{n+1}\right |_A \colon A
\to{\mathcal C}^{n+1}(G, C({\mathbb R}^r))\}\\
&=\ker\,\{\left.D^{n+1}\right |_A \colon A
\to{\mathcal C}^{n+1}(G, A)\}
={\mathcal P}_n(G,A)\,.
\endaligned
\end{equation}
Combining (\ref{P_nGamma(A) is contained in intersect A and Pn(Gamma,C(Rr))})
and (\ref{Gamma-polynomials in A}), we see that
\begin{equation}\label{P_nGamma(A) is contained in Pn(Gamma,A)}
P_n^G(A)={\mathcal P}_n(G,A)\,.
\end{equation}
The lattice $G$ is finitely generated and,
by our assumption, $\dim A^G<\infty$.
Hence, by Main Theorem, $\dim {\mathcal P}_n(G,A)<\infty$, and
(\ref{P_nGamma(A) is contained in Pn(Gamma,A)}) implies
$\dim P_n^G(A)<\infty$.
\ep

\begin{remark}\label{Rmk: we don't assume finite dimension of
coefficients} Any $G$-periodic polynomial $a\in A$
is a sum of monomials with $G$-invariant coefficients.
In the case we know that the coefficients of all these monomials are in $A$
(and thereby, actually, in $A^G$), we could prove
that $\dim P_n^G (A)<\infty$
without referring to Main Theorem.
\smallskip

\noindent Indeed, let us denote by $P_n$ the vector space of all polynomials
in $x_1,...,x_r$ of degree at most $n$
with constant coefficients. The tensor product
${\mathcal T}_n=A^G\otimes P_n$ of the finite dimensional vector spaces
$A^G$ and $P_n$ is of finite dimension.
In fact, ${\mathcal T}_n$ may be represented
as the space of all functions $F(y,x)$ on the direct product
${\mathbb R}^r_y\times{\mathbb R}^r_x$ of the form
$$
\sum_{j_1+...+j_r\le n} f_{j_1,...,j_r}(y_1,...,y_r)\,
x_1^{j_1}\ldots x_r^{j_r}
$$
with coefficients $f_{j_1,...,j_r}\in A^G$.
Any $G$-polynomial $a\in P_n^G (A)$ with
coefficients in $A^G$ may be considered as the restriction
of a certain function $F\in {\mathcal T}_n$ to the diagonal
$\Delta=\{x=y\}$ of ${\mathbb R}^r_y\times{\mathbb R}^r_x$.
Since ${\mathcal T}_n$ is of finite dimension, $P_n^G (A)$
is such as well.
\smallskip

\noindent However, the coefficients of a $G$-periodic
polynomial $a\in P_n^G (A)$ may not be in $A^G$,
and the above ``proof" does not apply in this situation.
\hfill $\bigcirc$
\end{remark}

As above, let $G$ be a full rank lattice in $\mathbb R^r$ and
$\mathcal D$ be a linear partial differential operator in $\mathbb R^r$
with continuous $G$-periodic coefficients.
Let ${\mathcal S}={\mathcal S}_{\mathcal D}$
denote the space of all classical global solutions $u$ of the equation
${\mathcal D}u=0$. Clearly, ${\mathcal S}$ is a $G$-invariant vector
subspace of $C({\mathbb R}^r)$.
Denote by $P_n^G({\mathcal S})$ the space of all solutions $p\in{\mathcal S}$
that are $G$-polynomials of degree at most $n$:
$$
\aligned
P_n^G({\mathcal S})=\Big\{p=\sum_{j_1+...+j_r\le n} f_{j_1,...,j_r}(x_1,...,&x_r)\,
x_1^{j_1}\ldots x_r^{j_r}\,| \\
&\text{all} \ \ f_{j_1,...,j_r}\ \
\text{are} \ \, G\text{-periodic}\,, \ \ {\mathcal D}p=0\Big\}\,.
\endaligned
$$

\noindent The following result follows immediately from
Theorem \ref{P_nGamma(A) is of finite dimension}:
\medskip

\begin{corollary}\label{polynomial-like-solutions}
Suppose that the space ${\mathcal S}^G$
of all $G$-periodic solutions of the equation ${\mathcal D}u=0$
is of finite dimension. Then $\dim P_n^G({\mathcal S})<\infty$
for every $n\in{\mathbb Z}_+ $.
\end{corollary}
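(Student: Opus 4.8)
The plan is to apply Theorem~\ref{P_nGamma(A) is of finite dimension} directly, taking for the $G$-invariant subspace $A$ the solution space $\mathcal S=\mathcal S_{\mathcal D}$ itself. First I would verify that $\mathcal S$ is indeed a $G$-invariant vector subspace of $C(\mathbb R^r)$. It is a vector subspace since $\mathcal D$ is linear; and for any $u\in\mathcal S$ and any $g\in G$, the translate $u^g(x)=u(x+g)=u(T_gx)$ again satisfies $\mathcal D u^g=0$, because the coefficients of $\mathcal D$ are $G$-periodic and hence $\mathcal D$ commutes with the translation operator $T_g$. Thus $u^g\in\mathcal S$, so $\mathcal S$ is $G$-invariant and the standing hypotheses of Theorem~\ref{P_nGamma(A) is of finite dimension} (with $A=\mathcal S$) are met.

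Next I would match the notation. With $A=\mathcal S$ we have $A^G=A\cap C^G=\mathcal S^G$, the space of $G$-periodic solutions, which is finite dimensional by assumption. Likewise $P_n^G(A)=A\cap P_n^G$, and unwinding the definition of $P_n^G(\mathcal S)$ stated just before the corollary — the functions $p=\sum_{j_1+\dots+j_r\le n} f_{j_1,\dots,j_r}(x)\,x_1^{j_1}\cdots x_r^{j_r}$ with $G$-periodic coefficients $f_{j_1,\dots,j_r}$ and $\mathcal D p=0$ — one sees that $P_n^G(\mathcal S)=\mathcal S\cap P_n^G=P_n^G(A)$.

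With these identifications, Theorem~\ref{P_nGamma(A) is of finite dimension} applied to $A=\mathcal S$ gives $\dim P_n^G(\mathcal S)=\dim P_n^G(A)<\infty$ for every $n\in\mathbb Z_+$, which is exactly the assertion. The only step requiring any thought is the $G$-invariance of $\mathcal S$, and that is a one-line consequence of the $G$-periodicity of the coefficients of $\mathcal D$; the rest is bookkeeping. Consequently I expect no genuine obstacle: the corollary is immediate once the solution space is recognized as an instance of the $G$-invariant subspace $A\subseteq C(\mathbb R^r)$ covered by the theorem.
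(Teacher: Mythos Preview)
Your proposal is correct and is exactly the approach the paper takes: the paper simply states that the corollary ``follows immediately from Theorem~\ref{P_nGamma(A) is of finite dimension}'' without further proof, and your argument just spells out the implicit identification $A=\mathcal S$, $A^G=\mathcal S^G$, together with the $G$-invariance of $\mathcal S$ coming from the periodicity of the coefficients of $\mathcal D$.
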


\noindent Notice that no additional restrictions to the linear partial
differential operator ${\mathcal D}$ are required.
One has just to assume that {\sl the coefficients of ${\mathcal D}$
{\rm (real or complex)} are continuous and $G$-periodic,
and the space ${\mathcal S}^G$ of all classical $G$-periodic solutions
of the equation ${\mathcal D}u=0$ is of finite dimension.}
Furthermore, the continuity of the coefficients of $\mathcal{D}$ does not seem necessary.
In this case, one can define and then obtain analogous results for certain classes of ``generalized $G$-periodic polynomial" \ so\-lu\-ti\-ons.
However, for general linear partial differential operators $\mathcal D$,
the {\em apriori Liouville-type assumption} $\dim {\mathcal S}^G<\infty$
cannot be omitted, unless ${\mathcal D}$ satisfies an appropriate
maximum principle.
\begin{example}
\label{pdo-real-coefficients}
Let $\mathcal{D}$ be an elliptic operator of second-order with real coefficients acting on functions $u \in C^2(\mathbb{R}^r)$:
$$\mathcal{D}=-\sum_{i,j=1}^r a_{ij}(x)\partial_i \partial_j+\sum_{i=1}^r b_i(x)\partial_i+c(x).$$
Here the coefficients $a_{ij}, b_i, c$ are real, locally H\"older continuous, $\mathbb{Z}^r$-periodic functions. The matrix $A(x):=(a_{ij}(x))$ is positive definite. Also, we assume that the zeroth-order coefficient $c(x)=\mathcal{D}(\textbf{1})$ is non-negative for each $x \in \mathbb{R}^r$, where $\textbf{1}$ is the constant function with value $1$. Then $\mathcal{D}$ satisfies the strong maximum principle (see e.g., \cite[Lemma 3.6]{LinPinchover}).
\end{example}
\subsection{Polynomial-like solutions of periodic differential operators on co-compact Riemannian coverings}
\label{polynomial-like-solutions}
In this subsection, we provide briefly some details as in Subsection \ref{subsect: Gamma-periodic polynomials in Gamma-invariant subspaces} for the case when $\mathcal{D}$ is a periodic differential operator defined on a co-compact Riemannian covering. Let $X$ be a connected Riemannian manifold equipped with an isometric, free, properly discontinuous and co-compact right group action of a finitely generated discrete group $G$ ($G$ may be non-abelian) and let $\mathcal{D}$ be a $G$-periodic elliptic differential operator on $X$, i.e., $\mathcal{D}$ commutes with the group action of $G$. We always assume that the principal symbol of $\mathcal{D}$ is a negative-definite quadratic form. To study $G$-periodic polynomials in this setting, we define the class of additive functions on the covering $X$ as follows (see more details in \cite{Kha, LinPinchover}):
\begin{definition}
\label{add}
A real continuous function $u$ on $X$ is said to be \textit{additive} if there is a homomorphism $\alpha: G \rightarrow \mathbb{R}$ such that
\begin{equation}
\label{additivity}
u^g(x)=u(x)+\alpha(g), \quad \mbox{for all} \quad (g,x) \in G \times X,
\end{equation}
where $u^g(x)=u(g \cdot x)$.

We also denote by $\mathcal{A}(X)$ the vector space consisting of all additive functions on $X$.
%
\end{definition}
It is known that the vector space $\mathcal{A}(X) / C^G(X)$ is isomorphic to $\Hom(G,\mathbb{R})$ (see \cite[Lemma 2.7]{LinPinchover}). Clearly, $\Hom(G,\mathbb{R})=\Hom(\widetilde G,\mathbb{R})$, where $\widetilde G=G/[G,G]$. Hence, the dimension of $\mathcal{A}(X) / C^G(X)$ is equal to the rank $r$ of the finitely generated abelian group $\widetilde G$. Let $\alpha_1, \ldots, \alpha_r$ be a vector basis of $\Hom(\widetilde G, \mathbb{R})\cong \mathbb{R}^r$ and $h_1, \ldots, h_r$ be a corresponding basis of $\mathcal{A}(X)$ (modulo $G$-periodic functions) via the isomorphism between $\mathcal{A}(X) / C^G(X)$ and $\Hom(\widetilde G,\mathbb{R})$. Notice that when $G=\mathbb{Z}^r$ and $X= \mathbb{R}^r$, it is easy to see that $h_j(x)=\phi_j(x)=x_j$ for any $1\leq j \leq r$ and $x \in X$; thus, we may regard these functions $h_1, \ldots, h_r$ as some analogs of Euclidean coordinate functions on the covering $X$ (see also \cite{Agmon} for the case of co-compact abelian coverings). By misuse of language, we say that a \textit{$G$-periodic monomial of degree $n$} is an element $Q_n$ if it has the form $Q_n=f(x)\cdot h_1(x)^{j_1}\ldots h_r(x)^{j_r}$, where the coefficient $f \neq 0$ is $G$-periodic and $j_1, \ldots, j_r \in \mathbb{Z}_+$ such that $j_1+\ldots+j_r=n$. As before, a \textit{$G$-periodic polynomial} is a sum of $G$-periodic monomials and this representation is unique up to a $G$-periodic function. Let $P^G$ ($P^G_n$) be the algebra of $G$-periodic polynomials (of order at most $n$). Then $P^G$ ($P^G_n$) is a $G$-invariant subalgebra (resp. subspace) of $C(X)$.

\noindent The $G$-action on $X$ induces the iterated difference operators
$$D^n: C(X) \rightarrow \mathcal{C}^n(G, C(X)).$$
Again, the subspaces $\mathcal{P}_n(G, C(X))$ of $C(X)$ of \textit{polynomial-like elements in $C(X)$} of order at most $n$ is the kernel of the operator $D^{n+1}$.
Due to (\ref{additivity}), each term $[D^1 h_i](g)(x)=h_i(g\cdot x)-h_i(x)$ is independent of $x$ ($1 \leq i \leq r$). Using this fact, we can repeat the proof of Lemma \ref{Lm: Dn of n-monomial}(a) to see that the same statement should hold, i.e., $D^{n+1}Q$=0 for any $G$-periodic monomial $Q$ of degree at most $n$. This means that
\begin{equation*}
\label{inclusion-kernel}
P^G_n \subseteq \mathcal{P}_n(G, C(X)).
\end{equation*}
Now suppose that $A$ is a $G$-invariant vector subspace of $C(X)$. We also denote $A^G=A \cap C^G(X)$ and $P^G_n(A)=A \cap P^G_n$.
Therefore, $P^G_n(A) \subseteq A \cap \mathcal{P}_n(G, C(X))=\mathcal{P}_n(G,A)$ (see the proof of Theorem \ref{P_nGamma(A) is of finite dimension}). By applying the Main Theorem again, whenever $\dim A^G<\infty$, we have $\dim P^G_n(A)\leq \dim \mathcal{P}_n(G,A)<\infty$ for any $n \in \mathbb{Z}_+$.

It is worthy mentioning that when $G$ is abelian, all of the results in Subsection \ref{subsec: G-periodic polynomials} and Subsection \ref{subsect: Gamma-periodic polynomials in Gamma-invariant subspaces} still hold. The proofs of these results do not require any change in this case, so we skip the details.

We finish this subsection by proving the following statement:
\begin{proposition}
\label{laplace-beltrami-covering}
Let $X$ be a Riemannian manifold which is a Galois covering of a compact Riemannian manifold and $G$ be its deck transformation group. Suppose that the abelianization $\widetilde{G}$ of $G$ has rank $r$ and $G$ is of polynomial growth \footnote{Due to the celebrated work of M. Gromov, this is equivalent to the assumption that $G$ is virtually nilpotent.}. Let $h_1, \ldots, h_r$ be a basis of the vector space $\mathcal{A}(X)$ (modulo $G$-periodic functions on $X$).
Let $\mathcal{D}$ be a $G$-periodic, real elliptic operator of second-order on $X$ such that $\mathcal{D}(\textbf{1}) \geq 0$. Also, let $P_n^G(\mathcal{S}_{\mathcal{D}})$ be the space of all solutions $u$ of the equation $\mathcal{D}u=0$ on $X$ such that
$$u(x)=\sum_{j_1+...+j_r\le n} f_{j_1,...,j_r}(x)\,
h_1(x)^{j_1}\ldots h_r(x)^{j_r},$$
where each term $f_{j_1, \ldots, j_r}(x)$ in the above sum is $G$-periodic.

Then $\dim P_n^G(\mathcal{S}_{\mathcal{D}})<\infty$ for every $n\in{\mathbb Z}_+ $. Furthermore, for any $n \geq 0$, we have
\begin{enumerate}[(i)]
\item
If $\mathcal{D}(\textbf{1}) \neq 0$, $P_n^G(\mathcal{S}_{\mathcal{D}})=\{0\}$.
\item
If $\mathcal{D}(\textbf{1})=0$, the following estimate holds:
$$
\dim P_n^G(\mathcal{S}_{\mathcal{D}}) \leq \binom{n+r}{r}.
$$
\end{enumerate}
\end{proposition}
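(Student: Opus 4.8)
The plan is to deduce everything from two ingredients: the sharp finiteness estimate of Proposition~\ref{estimate-dim-Pn}, applied to the $G$-module $A=\mathcal{S}_{\mathcal{D}}\subseteq C(X)$, and an elementary maximum-principle computation of $\dim A^G$ carried out on the compact base. Recall that, as already observed in this subsection, $A$ is a $G$-invariant subspace of $C(X)$ and
$P_n^G(\mathcal{S}_{\mathcal{D}})=A\cap P_n^G\subseteq A\cap\mathcal{P}_n(G,C(X))=\mathcal{P}_n(G,A)$,
where the inclusion uses the covering analogue of Lemma~\ref{Lm: Dn of n-monomial}(a) (so that $P_n^G\subseteq\mathcal{P}_n(G,C(X))$) and the last equality is proved exactly as in Theorem~\ref{P_nGamma(A) is of finite dimension}. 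Hence it is enough to bound $\dim\mathcal{P}_n(G,A)$, and for the precise estimates (i)--(ii) I need the exact value of $s:=\dim A^G$.

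First I would identify $A^G=\mathcal{S}_{\mathcal{D}}^G$ with the space of classical solutions of the operator $\mathcal{D}_M$ induced by $\mathcal{D}$ on the closed manifold $M=X/G$; this is legitimate because $\mathcal{D}$ is $G$-periodic, hence descends to $M$, and the $G$-periodic functions on $X$ are exactly the pull-backs of functions on $M$. Then I would invoke the strong maximum principle for the second-order elliptic operator $\mathcal{D}_M=-\sum a_{ij}\partial_i\partial_j+\sum b_i\partial_i+c$ on $M$, with $(a_{ij})$ positive definite and $c=\mathcal{D}(\textbf{1})\ge 0$. If $\mathcal{D}(\textbf{1})\ne 0$, i.e.\ $c\not\equiv 0$: applying the maximum principle to a solution $u$ at a point of nonnegative maximum (and, if necessary, to $-u$ at a point of nonpositive minimum) forces $u$ to be constant; but then $\mathcal{D}_M u=0$ reads $c\cdot u\equiv 0$, so $u\equiv 0$; thus $A^G=\{0\}$ and $s=0$. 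If $\mathcal{D}(\textbf{1})=0$, i.e.\ $c\equiv 0$: the zeroth-order term is absent, so after adding a sufficiently large constant one concludes as above that every solution on $M$ is constant, and since the constants are solutions, $A^G$ is one-dimensional and $s=1$.

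To finish, I would assemble the pieces. Since $X$ is a Galois covering of a compact manifold, $G$---and hence $\widetilde G$---is finitely generated, of rank $r$ by hypothesis. Proposition~\ref{estimate-dim-Pn} then gives $\dim\mathcal{P}_n(G,A)\le s\binom{n+r}{r}$, and together with $P_n^G(\mathcal{S}_{\mathcal{D}})\subseteq\mathcal{P}_n(G,A)$ this yields $\dim P_n^G(\mathcal{S}_{\mathcal{D}})\le s\binom{n+r}{r}$. When $\mathcal{D}(\textbf{1})\ne 0$ this bound equals $0$, which is assertion (i); when $\mathcal{D}(\textbf{1})=0$ it equals $\binom{n+r}{r}$, which is assertion (ii); and in both cases $\dim P_n^G(\mathcal{S}_{\mathcal{D}})<\infty$, the remaining assertion.

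The only genuinely analytic point---and the step to handle with care---is the maximum-principle dichotomy on $M$: one must check cleanly that a nonnegative, not identically zero zeroth-order coefficient kills all solutions, while a vanishing one leaves only the constants. Both facts are classical for second-order elliptic operators with nonnegative zeroth-order coefficient on a closed manifold (compare Example~\ref{pdo-real-coefficients} and \cite[Lemma~3.6]{LinPinchover}), so once they are quoted the proof is short. I note that the polynomial-growth hypothesis on $G$ is not actually used in this argument; only the finite generation of $\widetilde G$ and the finiteness of $\dim A^G$ enter.
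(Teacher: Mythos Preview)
Your proof is correct and follows the same architecture as the paper's: establish the inclusion $P_n^G(\mathcal{S}_{\mathcal{D}})\subseteq\mathcal{P}_n(G,\mathcal{S}_{\mathcal{D}})$, determine $s=\dim\mathcal{S}_{\mathcal{D}}^G$, and then invoke Proposition~\ref{estimate-dim-Pn}. The only difference is in how $s$ is computed. The paper simply quotes \cite[Theorem~6.9]{LinPinchover} (for $\mathcal{D}(\mathbf{1})=0$, giving $s=1$) and \cite[Theorem~4.5]{LinPinchover} (for $\mathcal{D}(\mathbf{1})\neq 0$, giving $s=0$), whereas you pass to the compact quotient $M=X/G$ and argue directly with the strong maximum principle. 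Your route is more self-contained and, as you correctly observe, shows that the polynomial-growth hypothesis on $G$ plays no role in the argument; the paper itself notes this only for case~(i) in the remark following the proposition, while your approach makes it transparent for both cases. The trade-off is that the paper's citations cover somewhat more general situations (bounded rather than merely periodic solutions), but for the statement at hand your elementary computation is entirely adequate.
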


\begin{proof}
According to our above discussion, $\dim P_n^G(\mathcal{S}_{\mathcal{D}})<\infty$ for all $n \in \mathbb{Z}_+$ if and only if $\dim P_0^G(\mathcal{S}_{\mathcal{D}})<\infty$. It is known (see \cite[Theorem 6.9]{LinPinchover}) that when $\mathcal{D}(\textbf{1})=0$, the dimension of the space $P_0^G(\mathcal{S}_{\mathcal{D}})$ consisting of all $G$-periodic (bounded) solutions on $X$ is one. When $\mathcal{D}(\textbf{1})\neq 0$, \cite[Theorem 4.5]{LinPinchover} yields that $\dim P_0^G(\mathcal{S}_{\mathcal{D}})=0$. In both cases, $P_0^G(\mathcal{S}_{\mathcal{D}})$ has finite dimension.
This proves the first statement. The second statement then follows immediately from the fact that $\dim P_n^G(\mathcal{S}_{\mathcal{D}}) \leq \dim \mathcal{P}_n(G, \mathcal{S}_{\mathcal{D}})$ and Proposition \ref{estimate-dim-Pn}.
\end{proof}

\begin{remark}
Note that in the case $\mathcal{D}(\textbf{1})\neq 0$, Proposition \ref{laplace-beltrami-covering} is still valid even if the growth of $G$ is not polynomial.
\end{remark}
\begin{bibdiv}
\begin{biblist}
\bib{Agmon}{article}{
    AUTHOR = {Agmon, Shmuel},
     TITLE = {On positive solutions of elliptic equations with periodic
              coefficients in {${\bf R}^n$}, spectral results and
              extensions to elliptic operators on {R}iemannian manifolds},
 BOOKTITLE = {Differential equations ({B}irmingham, {A}la., 1983)},
    SERIES = {North-Holland Math. Stud.},
    VOLUME = {92},
     PAGES = {7--17},
 PUBLISHER = {North-Holland, Amsterdam},
      YEAR = {1984},
   MRCLASS = {35J15 (35B05 35P99 58G25)},
  MRNUMBER = {799327 (87a:35060)},
MRREVIEWER = {W. Allegretto},
}
\bib{AveLin}{article}{
    AUTHOR = {Avellaneda, Marco},
    AUTHOR = {Lin, Fang-Hua},
     TITLE = {Un th\'eor\`eme de {L}iouville pour des \'equations elliptiques \`a
              coefficients p\'eriodiques},
   JOURNAL = {C. R. Acad. Sci. Paris S\'er. I Math.},
  FJOURNAL = {Comptes Rendus de l'Acad\'emie des Sciences. S\'erie I.
              Math\'ematique},
    VOLUME = {309},
      YEAR = {1989},
    NUMBER = {5},
     PAGES = {245--250},
      ISSN = {0764-4442},
   MRCLASS = {35J15 (35B99)},
  MRNUMBER = {1010728},
MRREVIEWER = {Maurice Gaultier},
}
\bib{BrudnyiA}{article}{
    AUTHOR     = {Brudnyi, Alexander},
     TITLE        = {Holomorphic functions of polynomial growth on abelian coverings of a compact complex manifold},
     JOURNAL    = {Comm. Anal. Geom.},
    FJOURNAL    = {Communications in analysis and geometry},
    VOLUME     = {6},
    YEAR         = {1998},
    NUMBER     = {3},
     PAGES      = {485-510},
       MRNUMBER = {1638866},
MRREVIEWER = {Finnur Larusson},
          URL          = {http://dx.doi.org/10.4310/CAG.1998.v6.n3.a3},
}
\bib{ColdMinic1}{article}{
AUTHOR = {Colding, Tobias H.},
    AUTHOR = {Minicozzi, William P., II},
     TITLE = {Harmonic functions on manifolds}
JOURNAL = {Ann. of Math. (2)},
  FJOURNAL = {Annals of Mathematics},
    VOLUME = {146},
      YEAR = {1997},
    NUMBER = {3},
     PAGES = {725--747},
     MRCLASS = {35B40 (35B27 35J60 49Q99 58E15)},
  MRNUMBER = {1491451},
MRREVIEWER = {Tanya J. Christiansen},
       URL = {https://doi-org.lib-ezproxy.tamu.edu:9443/10.1007/BF02584809},
}
\bib{ColdMinic2}{article}{
AUTHOR = {Colding, Tobias H.},
    AUTHOR = {Minicozzi, William P., II},
     TITLE = {Weyl type bounds for harmonic functions}
JOURNAL = {Invent. math.},
  FJOURNAL = {Inventiones mathematicae},
    VOLUME = {131},
      YEAR = {1998},
      NUMBER = {2},
      PAGES = {257--298},
     MRCLASS = {53C21 (58G30)},
  MRNUMBER = {1608571},
MRREVIEWER = {Man Chun Leung},
}
\bib{ColdMinic3}{article}{
AUTHOR = {Colding, Tobias H.},
    AUTHOR = {Minicozzi, William P., II},
     TITLE = {Liouville Theorems for Harmonic Sections}
JOURNAL = {Comm. Pure Appl. Math.},
  FJOURNAL = {Communications on Pure and Applied Mathematics},
    VOLUME = {51},
      YEAR = {1998},
      NUMBER = {2},
      PAGES = {113--138},
     MRCLASS = {53C21 (53C20 58E15)},
  MRNUMBER = {1488297},
MRREVIEWER = {Man Chun Leung},
}
\bib{Kha}{article}{
AUTHOR= {Kha, Minh},
 TITLE = {A short note on additive functions on co-compact Riemannian normal coverings.},
note={arXiv:1511.00185, preprint},
}
\bib{KuchSurvey}{article}{
    AUTHOR     = {Kuchment, Peter},
     TITLE        = {An overview of periodic elliptic operators},
    JOURNAL    = {Bulletin (New Series) of the American Mathematical Society},
    VOLUME     = {53},
    YEAR         = {July 2016},
    NUMBER     = {3},
     PAGES      = {343-414},
     URL          = {http://dx.doi.org/10.1090/bull/1528},
}
\bib{KuchPincho}{article}{
    AUTHOR = {Kuchment, Peter},
    AUTHOR = {Pinchover, Yehuda},
     TITLE = {Liouville theorems and spectral edge behavior on abelian
              coverings of compact manifolds},
   JOURNAL = {Trans. Amer. Math. Soc.},
  FJOURNAL = {Transactions of the American Mathematical Society},
    VOLUME = {359},
      YEAR = {2007},
    NUMBER = {12},
     PAGES = {5777--5815},
      ISSN = {0002-9947},
   MRCLASS = {58J05 (35B05 35J15 35P05 58J50)},
  MRNUMBER = {2336306},
MRREVIEWER = {Alberto Parmeggiani},
       URL = {http://www.ams.org/journals/tran/2007-359-12/S0002-9947-07-04196-7/S0002-9947-07-04196-7.pdf},
}

\bib{Lin}{article}{
    AUTHOR     = {Lin, Vladimir},
     TITLE     = {Liouville coverings of complex spaces and amenable groups}, JOURNAL    = {Matem. Sbornik  (in Russian [Engl. transl. Math. USSR-Sb. 60 (1988), no. 1, 197--216])}
        FJOURNAL = {Matematicheskij sbornik},
    VOLUME     = {132(174)}
    YEAR       = {1987},
    NUMBER     = {2},
     PAGES     = {202--224}
      ISSN = {},
   MRCLASS = {88m:32016 32C15},
  MRNUMBER = {0882834},
MRREVIEWER = {Harold P. Boas}
     URL   = {http://dx.doi.org/10.1070/SM1988v060n01ABEH003163},
}
\bib{LinPinchover}{article}{
    AUTHOR = {Lin, Vladimir Ya.},
    AUTHOR = {Pinchover, Yehuda},
     TITLE = {Manifolds with group actions and elliptic operators},
   JOURNAL = {Mem. Amer. Math. Soc.},
  FJOURNAL = {Memoirs of the American Mathematical Society},
    VOLUME = {112},
      YEAR = {1994},
    NUMBER = {540},
     PAGES = {vi+78},
      ISSN = {0065-9266},
     CODEN = {MAMCAU},
   MRCLASS = {58G03 (35C15 35J15)},
  MRNUMBER = {1230774 (95d:58119)},
MRREVIEWER = {Vadim A. Ka{\u\i}manovich},
}
\bib{LinZai}{article}{
    AUTHOR = {Lin, Vladimir},
    AUTHOR = {Zaidenberg, Mikhail},
     TITLE = {Liouville and and Carathe\'odory coverings in Riemannian and complex geometry},
 BOOKTITLE = {Voronezh Winter Mathematical Schools, Amer. Math. Soc. Transl. Ser. 2, Adv. Math. Sci., 37},
    VOLUME = {184},
     PAGES = {111--130},
      YEAR = {1998},
   MRCLASS = {32Q57 31C12 53C55},
  MRNUMBER = {MR1729929 (2001f:32046},
MRREVIEWER = {Wolfgang Woess},
}
\bib{LyoSul}{article}{
    AUTHOR = {Lyons, Terry},
    AUTHOR = {Sullivan, Dennis},
     TITLE = {Function theory, random paths and covering spaces},
   JOURNAL = {J. Differential Geom.},
  FJOURNAL = {Journal of Differential Geometry},
    VOLUME = {19},
      YEAR = {1984},
    NUMBER = {2},
     PAGES = {299--323},
   MRCLASS = {58G32 (31C12 60J65)},
  MRNUMBER = {0755228},
MRREVIEWER = {P. E. Jupp}
}
\bib{MosStru}{article}{
    AUTHOR = {Moser, J\"urgen},
    AUTHOR = {Struwe, Michael},
     TITLE = {On a {L}iouville-type theorem for linear and nonlinear
              elliptic differential equations on a torus},
   JOURNAL = {Bol. Soc. Brasil. Mat. (N.S.)},
  FJOURNAL = {Boletim da Sociedade Brasileira de Matem\'atica. Nova S\'erie},
    VOLUME = {23},
      YEAR = {1992},
    NUMBER = {1-2},
     PAGES = {1--20},
      ISSN = {0100-3569},
   MRCLASS = {35B40 (35B27 35J60 49Q99 58E15)},
  MRNUMBER = {1203171},
MRREVIEWER = {Jes\'us Hern\'andez},
       URL = {https://doi-org.lib-ezproxy.tamu.edu:9443/10.1007/BF02584809},
}

\end{biblist}
\end{bibdiv}

\end{document}